\DeclareFontFamily{OMS}{rsfs}{\skewchar\font'60}
\DeclareFontShape{OMS}{rsfs}{m}{n}{<-5>rsfs5 <5-7>rsfs7 <7->rsfs10 }{}
\DeclareSymbolFont{rsfs}{OMS}{rsfs}{m}{n}
\DeclareSymbolFontAlphabet{\scr}{rsfs}
\newcommand{\sI}{\scr{I}}
\newcommand{\phat}{^{\scriptscriptstyle\wedge}_{p}}
\newcommand{\twohat}{^{\scriptscriptstyle\wedge}_{2}}
\let\sma\wedge
\renewcommand{\to}{\mathchoice{\longrightarrow}{\rightarrow}{\rightarrow}{\rightarrow}}
\newcommand{\Z}{\ensuremath{\mathbb{Z}}}
\newcommand{\R}{\ensuremath{\mathbb{R}}}
\newcommand{\monoto}{\lhook\joinrel\relbar\joinrel\rightarrow}
\let\catsymbfont\mathcal 
\newcommand{\aA}{{\catsymbfont{A}}}
\newcommand{\aB}{{\catsymbfont{B}}}
\newcommand{\aC}{{\catsymbfont{C}}}
\newcommand{\aD}{{\catsymbfont{D}}}
\newcommand{\aO}{{\catsymbfont{O}}}
\newcommand{\aS}{{\catsymbfont{S}}}
\newcommand{\aT}{{\catsymbfont{T}}}
\newcommand{\aU}{{\catsymbfont{U}}}
\newcommand{\aV}{{\catsymbfont{V}}}
\newcommand{\aW}{{\catsymbfont{W}}}
\newcommand{\bA}{\mathbb{A}}
\newcommand{\bB}{\mathbb{B}}
\newcommand{\bC}{\mathbb{C}}
\newcommand{\bD}{\mathbb{D}}
\newcommand{\bE}{\mathbb{E}}
\newcommand{\bK}{\mathbb{K}}
\newcommand{\bL}{\mathbb{L}}
\newcommand{\bM}{\mathbb{M}}
\newcommand{\bO}{\mathbb{O}}
\newcommand{\bU}{\mathbb{U}}
\newcommand{\bZ}{\mathbb{Z}}
\newcommand{\htp}{\simeq}    
\newcommand{\thp}{\ltimes}   
\DeclareFontFamily{OMS}{rsfs}{\skewchar\font'60}
\DeclareFontShape{OMS}{rsfs}{m}{n}{<-5>rsfs5 <5-7>rsfs7 <7->rsfs10 }{}
\DeclareSymbolFont{rsfs}{OMS}{rsfs}{m}{n}
\DeclareSymbolFontAlphabet{\scr}{rsfs}
\def\quickop#1{\expandafter\DeclareMathOperator\csname
#1\endcsname{#1}}
\newtheorem{thm}{Theorem}[section]
\newtheorem{cor}[thm]{Corollary}
\newtheorem{lem}[thm]{Lemma}
\newtheorem{prop}[thm]{Proposition}
\newtheorem*{main-l}{Localization Theorem}
\newtheorem*{main-d}{D\'{e}vissage Theorem}
\theoremstyle{definition}
\newtheorem{defn}[thm]{Definition}
\theoremstyle{remark}
\newtheorem{rem}[thm]{Remark}
\newcommand{\sL}{\scr{L}}
\newcommand{\cyc}{\ensuremath{\operatorname{cyc}}}
\newcommand{\tor}{\ensuremath{\operatorname{Tor}}}
\begin{document}

\title[THH of Thom spectra which are $E_\infty$ ring spectra]%
{Topological Hochschild homology of Thom spectra which are
  $E_\infty$-ring spectra}
\author{Andrew J. Blumberg}
\address{Department of Mathematics, Stanford University, 
Stanford, CA 94305}
\email{blumberg@math.stanford.edu}

\begin{abstract}
We identify the topological Hochschild homology ($THH$) of the Thom
spectrum associated to an $E_\infty$ classifying map $X \rightarrow
BG$, for $G$ an appropriate group or monoid (e.g. $U$, $O$, and $F$).
We deduce the comparison from the observation of McClure, Schwanzl,
and Vogt that $THH$ of a cofibrant commutative $S$-algebra ($E_\infty$
ring spectrum) $R$ can be described as an indexed colimit together
with a verification that the Lewis-May operadic Thom spectrum functor
preserves indexed colimits.  We prove a splitting result $THH(Mf) \htp
Mf \sma BX_+$ which yields a convenient description of $THH(MU)$.  This
splitting holds even when the classifying map $f \colon X \rightarrow
BG$ is only a homotopy commutative $A_\infty$ map, provided that the
induced multiplication on $Mf$ extends to an $E_\infty$ ring
structure; this permits us to recover Bokstedt's calculation of
$THH(H\Z)$.
\end{abstract}

\maketitle

\section{Introduction}
The algebraic $K$-theory of ring spectra encodes subtle and
interesting invariants.  It has long been known that the $K$-theory of
ordinary rings contains a great deal of arithmetic information.  On
the other hand, Waldhausen showed that there is a deep connection
between the $K$-theory of the sphere spectrum and the geometry of
high-dimensional manifolds (as seen by pseudo-isotopy theory)
\cite{waldhausen}.  Waldhausen's ``chromatic'' program for
analyzing $K(S)$ in terms of a chromatic tower of $K$-theory spectra
suggests a connection between these seemingly disparate bodies of
work, as such a tower can be regarded as interpolating from arithmetic
to geometry \cite{waldhausen-chrom}.  Recently, Rognes' development of
a Galois theory of $S$-algebras \cite{rognes-galois} and attendant
generalizations of classical $K$-theoretic descent
\cite{ausoni-rognes} along with Lurie's work on derived algebraic
geometry \cite{lurie} have raised the prospect of an arithmetic
theory of ring spectra, which would provide a unified viewpoint on
these phenomena.  To gain insight into the situation, examples
provided by computations of the $K$-theory of ring spectra which do
not come from ordinary rings are essential.

Of course, computation of algebraic $K$-theory tends to be extremely
difficult.  However, for connective ring spectra, algebraic $K$-theory
is in principle tractable via ``trace methods'', which relates
$K$-theory to the more computable topological Hochschild homology
($THH$) and topological cyclic homology ($TC$).  Specifically, there is a 
topological lifting of the Dennis trace to a ``cyclotomic trace'' map
\cite{bokstedt-hsiang-madsen}, and the fiber of this map is
well-understood \cite{dundas, mccarthy}.  Moreover, $TC(R)$ is built
as a certain homotopy limit of the fixed-point spectra of $THH(R)$ 
with regard to the action of subgroups of the circle, and so is
relatively computable via the methods of equivariant stable homotopy
theory.  One of the major early successes of this methodology was the
resolution of the ``K-theory Novikov conjecture'' by Bokstedt, Hsiang,
and Madsen \cite{bokstedt-hsiang-madsen}.  Central to their results
was a computation of the $TC$ and $THH$ associated to the ``group
ring'' $\Sigma^\infty (\Omega X)_+$, for a space $X$.

Thom spectra associated to multiplicative classifying maps provide a
natural generalization of the suspension spectra of monoids.
Moreover, many interesting ring spectra arise naturally as Thom
spectra.  The purpose of this paper is to provide an explicit and
conceptual description of the $THH$ of Thom spectra which are
$E_\infty$ ring spectra.  As the starting point for the calculation of
$TC$ is the determination of $THH$, this description provides
necessary input to ongoing work to understand the $TC$ and $K$-theory
of such spectra.  This paper is a companion to a joint paper with
R. Cohen and C. Schlichtkrull \cite{blumberg-cohen-schlichtkrull}
which uses somewhat different methods to study the $THH$ of Thom
spectra which are $A_\infty$ ring spectra.

The operadic approach to Thom spectra of Lewis and May
\cite[7.3]{lms}, \cite{may-quinn-ray} provides a Thom spectrum functor
$M$ which yields structured ring spectra when given suitable input.
Specifically, for suitable topological groups and monoids $G$, Lewis
constructs a Thom spectrum functor 
\[
M \colon \aT / BG  \to \aS \backslash S
\]
from the category of based spaces over $BG$ to the category
$\aS \backslash S$ of unital spectra.  Furthermore, he shows that if
$f \colon X \rightarrow BG$ is an $E_n$ map then $Mf$ is an $E_n$ ring
spectrum, where $E_n$ denotes an operad which is augmented over the
linear isometries operad $\sL$ and weakly equivalent to the little $n$-cubes
operad.  In particular, $M$ takes $E_\infty$ maps to $E_\infty$ ring
spectra.  Since $E_\infty$ ring spectra can be functorially
replaced by commutative $S$-algebras, we can regard $M$ as
restricting to a functor
\[
M \colon \aT[\sL]/BG \to \aC\aA_S.
\] 
Thus, $M$ produces output which is suitable for the construction of
$THH$.

The development of symmetric monoidal categories of spectra has made
possible direct constructions of topological Hochschild homology
($THH$) which mimic the classical algebraic descriptions of Hochschild
homology, replacing the tensor product with the smash product.  Thus for a
cofibrant $S$-algebra $R$, $THH(R)$ can be 
computed as the realization of the cyclic bar construction $N^{\cyc}
R$ with respect to the smash product, where $N^{\cyc} R$ is the the
simplicial spectrum 
\[[k] \rightarrow \underbrace{R \sma R \sma \ldots \sma R}_{k+1}\]
with the usual Hochschild structure maps \cite[9.2.1]{ekmm}.

Recall that the category of commutative $S$-algebras is enriched and
tensored over unbased spaces, and more generally has all indexed
colimits \cite[7.2.9]{ekmm}.  When $R$ is commutative, McClure,
Schwanzl, and Vogt \cite{mcclure-schwanzl-vogt} made precise an
insight of Bokstedt's that there should be a homeomorphism \[|N^{cyc}
R| \cong R \otimes S^1.\] Here $R \otimes S^1$ denotes the tensor of
the commutative $S$-algebra $R$ with the unbased space $S^1$.  Thus,
we can describe $THH(Mf)$ by studying $Mf \otimes S^1$.

The category of $\sL$-spaces is also tensored over unbased spaces, and
this induces a tensored structure on the category of $\sL$-maps
$f \colon X \rightarrow BG$.  Our first main theorem, proved in
Section~\ref{sec:thomleft}, states that the Thom spectrum functor is
compatible with the topologically tensored structures on its domain
and range categories.

\begin{thm}\label{commutation}
The Thom spectrum functor
\[
M \colon \aT[\sL]/BG \to \aC\aA_S
\]
preserves indexed colimits and in fact is a continuous left adjoint.
In particular, for an unbased space $A$ and an $\sL$-map
$X \rightarrow BG$, there is a homeomorphism 
\[M(f \otimes A) \cong Mf \otimes A.\]
\end{thm}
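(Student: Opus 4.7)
The plan is to identify $M$ as a topologically enriched (continuous) left adjoint, since enriched left adjoints between tensored enriched categories automatically preserve all weighted (indexed) colimits. The particular homeomorphism $M(f \otimes A) \iso Mf \otimes A$ is then the special case of this preservation applied to the tensor, which is by definition the weighted colimit of the constant diagram at $f$ with weight $A$. So the principal content of the theorem is the existence of a right adjoint.

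To construct the right adjoint I would follow the Lewis-May recipe for $M$ and take adjoints step by step. The Lewis-May functor is built by pulling back the universal principal $G$-bundle (or more precisely the universal $G$-spectrum $EG \ltimes S$) along $f \colon X \to BG$, forming the associated twisted half-smash product, and then interpreting the result as a commutative $S$-algebra using the $\sL$-action. Each ingredient is known to be a left adjoint in the unstructured setting: pullback $f^{*}$ in the category of $\sL$-spaces over $BG$ has a right adjoint given by the mapping space functor in the slice category; the twisted half-smash with the universal spectrum has a right adjoint given by the gauge/section spectrum of LMS VII.4; and passing from $E_\infty$-ring spectra to commutative $S$-algebras is a Quillen equivalence whose left adjoint clearly preserves colimits. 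Composing these right adjoints produces a functor $\aC\aA_S \to \aT[\sL]/BG$ which, on an informal level, sends a commutative $S$-algebra $R$ to (a version of) the space of units $gl_1(R)$ equipped with its canonical map to $BG$, and the required adjunction $\aC\aA_S(Mf,R) \iso \aT[\sL]/BG(f,gl_1(R))$ expresses the universal property that multiplicative maps out of $Mf$ correspond to $G$-oriented structures on $R$.

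Third, I would check that each of these adjunctions is enriched over unbased spaces — that is, that the natural isomorphism of morphism sets lifts to a homeomorphism of mapping spaces. This is a routine verification once one is careful to formulate the mapping space functors compatibly with the $\sL$-structure on both sides. Granted continuity of the adjunction, standard enriched category theory (see Kelly, \emph{Basic Concepts of Enriched Category Theory}) gives preservation of all weighted colimits, from which the statement about tensors, and hence about indexed colimits in general, follows at once.

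The main obstacle is bookkeeping the $\sL$-operad action throughout the construction of the right adjoint. While each individual step of the Lewis-May functor is visibly a left adjoint at the unstructured level, one must verify that refining to $\sL$-spaces over $BG$ on the source and to commutative $S$-algebras on the target still yields an adjoint pair — in particular that the putative right adjoint actually lands in $\aT[\sL]/BG$ rather than merely in $\aT/BG$. Once this compatibility is established, continuity and preservation of indexed colimits are formal.
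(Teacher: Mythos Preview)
Your strategy --- construct the right adjoint directly and then invoke Kelly to get preservation of weighted colimits --- is the reverse of what the paper does, and the paper's route is substantially easier to carry out. The paper never produces the right adjoint explicitly. Instead it (i) cites Lewis for the fact that the \emph{unstructured} Thom spectrum functor $M\colon \aT/BG \to \aS\backslash S$ preserves ordinary colimits and tensors with unbased spaces; (ii) invokes Lewis's isomorphism $M\bK_{BG} \cong \tilde{\bC}M$, compatible with the monad structure maps; and (iii) proves a general lifting result (the paper's Theorem~\ref{lifting}): if a continuous functor between base categories preserves colimits and tensors and intertwines monads that preserve reflexive coequalizers, then the induced functor on algebra categories also preserves colimits and tensors. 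That yields preservation of indexed colimits for $M_{E_\infty}$; composing with the continuous left adjoint $S\sma_{\sL}-$ gives the result for $M_{\aC\aA_S}$. Only afterwards does the paper deduce that $M$ is a left adjoint, and even then indirectly: the special adjoint functor theorem gives a right adjoint at the base level, and the adjoint lifting theorem pushes it up to the monadic level. The paper explicitly remarks that this ``does not produce an explicit description of the right adjoint and so is somewhat unsatisfying.''

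Your outline, by contrast, tries to do exactly the hard thing the paper avoids. The decomposition you sketch is not quite the Lewis--May construction: $M$ is not assembled from ``pullback $f^*$'' followed by a twisted half-smash in a way that lets you read off adjoints factor by factor, because $f$ is the \emph{variable}, not a fixed base-change. The identification of the right adjoint with a $gl_1$-type functor is morally correct, but establishing that adjunction rigorously (with the $\sL$-structure and the map to $BG$ on the nose) is essentially the content of later work of Ando--Blumberg--Gepner--Hopkins--Rezk and is far from a routine bookkeeping exercise. So while your approach could in principle be completed, it is a much heavier lift than the paper's monadic argument, and your proposal as written leaves the genuinely difficult step --- producing the structured right adjoint --- unresolved.
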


This theorem follows from an appropriate categorical viewpoint on the
Thom spectrum functor.  The category of $\sL$-spaces can be regarded
as the category $\aT[\bK]$ of algebras over a certain monad $\bK$ on
the category $\aT$ of based spaces.  We can utilize this description
to describe the category of $\sL$-maps $X
\rightarrow BG$ as the category $(\aT/BG) [\bK_{BG}]$ of algebras over
a closely related monad $\bK_{BG}$.  Similarly, the category of
$E_\infty$-ring spectra can be regarded as the category $(\aS
\backslash S)[\tilde{\bC}]$ of algebras over a monad $\tilde{\bC}$ on
the category $\aS \backslash S$ of unital spectra.  Each of these
categories admits the structure of a topological model category, by
which we mean a model category structure compatible with an enrichment
in spaces \cite[7.2-7.4]{ekmm}.  In particular, each of these
categories has tensors with unbased spaces.

Furthermore, work of Lewis \cite[7]{lms} describes the interaction of
$M$ with these monads.  Specifically, Lewis shows \cite[7.7.1]{lms} that
\[M \bK_{BG} f \cong \tilde{\bC} Mf\]
and moreover that in fact $M$ takes the monad $\bK_{BG}$ to the
monad $\tilde{\bC}$ (i.e. that the indicated isomorphism is suitably
compatible with the monad structure maps).  In
Section~\ref{sec:coltech}, we study this situation more generally and
prove the following result about the preservation of indexed colimits 
by induced functors on categories of monadic algebras;
Theorem~\ref{commutation} is then a straightforward consequence.

\begin{thm}\label{lifting}
Let $\aA$ and $\aB$ be categories tensored over unbased spaces, and
let $\bM_A$ be a continuous monad on $A$ and $\bM_B$ be a continuous
monad on $B$, such that $\bM_A$ and $\bM_B$ preserve reflexive
coequalizers.  Let $F \colon \aA \to \aB$ be a continuous functor such
that
\begin{itemize}
\item $F$ preserves colimits and tensors, and
\item There is an isomorphism $F \bM_A X \cong \bM_B FX$ which is compatible
with the monad structure maps.
\end{itemize}
Then $F$ restricts to a functor 
\[F_{\bM}\colon \aA[\bM_A] \to \aB[\bM_B]\]
which preserves colimits and tensors.  If $F$ is a left adjoint, then 
$F_{\bM}$ is also a left adjoint.
\end{thm}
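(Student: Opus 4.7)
The plan is to lift $F$ to a functor $F_{\bM}$ between algebra categories, verify preservation of colimits and tensors via standard reflexive coequalizer presentations, and deduce the right adjoint from an adjoint lifting theorem. For the lift, I set $F_{\bM}(X,\mu) = FX$ equipped with the $\bM_B$-action
\[
\bM_B FX \iso F\bM_A X \xrightarrow{F\mu} FX.
\]
The hypothesis that the isomorphism $F\bM_A \iso \bM_B F$ is compatible with the monad structure maps is exactly what is needed to verify the unit and associativity axioms for this action, and continuity of $F_{\bM}$ is inherited from that of $F$.

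For preservation of colimits, I would use the standard fact (see e.g.\ Borceux, \emph{Handbook of Categorical Algebra II}, Proposition 4.3.2) that when $\bM_A$ preserves reflexive coequalizers, $\aA[\bM_A]$ has all colimits, and the colimit of a diagram $D$ of algebras is computed as a reflexive coequalizer
\[
\bM_A\, \colim_{\aA}(\bM_A U D) \rightrightarrows \bM_A\, \colim_{\aA}(U D) \longrightarrow \colim D
\]
in which $U\colon \aA[\bM_A] \to \aA$ is the forgetful functor, one parallel map is induced by the algebra actions and the other by the monad multiplication. Applying $F$ and using in turn that $F$ preserves colimits in $\aA$, that $F\bM_A \iso \bM_B F$ compatibly with monad structure, and that $\bM_B$ preserves reflexive coequalizers, I obtain the corresponding reflexive coequalizer presentation of $\colim F_{\bM} D$ in $\aB[\bM_B]$, so $F_{\bM}$ preserves colimits.

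For tensors, there is an analogous reflexive coequalizer presentation
\[
\bM_A(\bM_A X \otimes A) \rightrightarrows \bM_A(X \otimes A) \longrightarrow X \otimes A
\]
of the tensor of an algebra $X$ with an unbased space $A$ in $\aA[\bM_A]$, obtained by applying $(-)\otimes A$ to the canonical free resolution $\bM_A \bM_A X \rightrightarrows \bM_A X \to X$ and using that the tensor of a free algebra is free on the corresponding tensor in $\aA$. The same three-step argument, now also using that $F$ preserves tensors in the base categories, shows that $F_{\bM}$ takes this coequalizer to the corresponding presentation in $\aB[\bM_B]$. Together with colimit preservation this yields preservation of all indexed colimits.

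Finally, in the left adjoint case, a right adjoint to $F_{\bM}$ is produced by invoking Dubuc's adjoint triangle theorem for the triangle formed by $F_{\bM}$, $F$, and the free $\bM_B$-algebra functor, using that the required equalizers in $\aA[\bM_A]$ are created by the forgetful functor from $\aA$. The main obstacle in all of this is bookkeeping rather than any conceptual difficulty: essentially all of the content is packed into the monad-compatibility of $F\bM_A \iso \bM_B F$ and the reflexive coequalizer presentations of colimits and tensors, and once these are carefully set up each verification reduces to a routine diagram chase.
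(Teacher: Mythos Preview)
Your proposal is correct and matches the paper's argument essentially line for line: the lift via the monad-compatibility isomorphism, the verification that colimits and tensors are preserved by pushing the standard reflexive coequalizer presentations through $F$, and the final appeal to an adjoint-lifting result are all exactly as in the paper. The only cosmetic difference is that the paper cites Borceux's adjoint lifting theorem (Vol.~II, 4.5.6) rather than Dubuc's adjoint triangle theorem for the last step, but these are closely related and the underlying idea is the same.
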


In order to use the formula $M(f \otimes S^1) \cong Mf \otimes S^1$
provided by Theorem~\ref{commutation} to compute $THH(Mf)$, we must
first ensure that we have homotopical control over $Mf$.  Two
technical issues arise.  First, the cyclic bar construction
description of $THH(R)$ only has the correct homotopy type when the
point-set smash product $R \sma R$ represents the derived smash
product (for instance if $R$ is cofibrant as a commutative
$S$-algebra).  Second, when working over $BF$, Lewis' construction of
the Thom spectrum functor we give preserves weak equivalences only for
certain classifying maps (``good'' maps), notably Hurewicz fibrations.

We show in Section~\ref{sec:compute} that by appropriate cofibrant
replacement of $f\colon X \to BG$, we can ensure that $Mf$ is suitable
for computing the derived smash product.  The second problem can be
handled by the classical device of functorial replacement by a
Hurewicz fibration.  Unfortunately, it turns out to be complicated to
analyze the interaction of these two replacements.  In the companion
paper \cite{blumberg-cohen-schlichtkrull} we discuss the technical
details of the interaction between these processes.  In the present
context, we are able to obtain our main applications without
confronting this issue; although with the tools described herein the
next result is only practically applicable when $G$ is a group, in
which case all maps are good, the splitting in
Theorem~\ref{e2splitting} holds for $BF$ as well.

\begin{cor}
Let $f \colon X \rightarrow BG$ be a good map of $\sL$-spaces such
that $X$ is a cofibrant $\sL$-space.  Then $THH(Mf)$ and $M(f \otimes
S^1)$ are isomorphic in the derived category.
\end{cor}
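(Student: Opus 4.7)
The plan is to string together the three ingredients assembled in the introduction: Theorem~\ref{commutation}, the McClure--Schwanzl--Vogt identification of $THH$ with the tensor $R\otimes S^{1}$, and the cofibrancy/goodness conditions on $f$ that guarantee we are actually working in the derived category.

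First I would arrange the point-set situation. Since $X$ is a cofibrant $\sL$-space and $f\colon X\to BG$ is good, the results of Section~\ref{sec:compute} imply that $Mf$ is homotopically well-behaved: the underlying spectrum has the correct homotopy type, and the cofibrancy of $X$ lifts through $M$ (which is a continuous left adjoint by Theorem~\ref{commutation}) to give a cofibrant commutative $S$-algebra structure on $Mf$, so that the point-set smash product $Mf\sma Mf$ represents the derived smash product. Consequently the cyclic bar construction $N^{\cyc}Mf$ with respect to the smash product, as recalled from \cite[9.2.1]{ekmm}, models the derived $THH$; that is, $THH(Mf)\htp |N^{\cyc}Mf|$ in the derived category.

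Next I would invoke the McClure--Schwanzl--Vogt theorem \cite{mcclure-schwanzl-vogt}, which gives a natural homeomorphism $|N^{\cyc}Mf|\cong Mf\otimes S^{1}$, where the tensor is formed in the category $\aC\aA_{S}$ of commutative $S$-algebras. Combined with Theorem~\ref{commutation} applied to the unbased space $A=S^{1}$, this yields a chain of (natural) homeomorphisms
\[
|N^{\cyc}Mf|\;\cong\;Mf\otimes S^{1}\;\cong\;M(f\otimes S^{1}),
\]
the second identification taking place in $\aC\aA_{S}$ by the indexed-colimit preservation part of Theorem~\ref{commutation}. Passing to the derived category and combining with the previous paragraph gives the claimed equivalence $THH(Mf)\htp M(f\otimes S^{1})$.

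The main obstacle is really the point-set/homotopical bookkeeping in the first paragraph: one must simultaneously ensure that $Mf$ is sufficiently cofibrant for the cyclic bar construction to model $THH$ and that $Mf$ has the correct homotopy type (so that the resulting $M(f\otimes S^{1})$ is a derived object). The hypothesis that $f$ is good disposes of the latter by the standard properties of the Lewis--May Thom functor, and the cofibrancy of $X$ as an $\sL$-space, combined with the left-adjoint character of $M$ supplied by Theorem~\ref{commutation}, takes care of the former; once both are in hand, the chain of homeomorphisms above is entirely formal.
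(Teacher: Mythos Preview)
Your overall strategy---chain together Theorem~\ref{commutation}, the McClure--Schw\"anzl--Vogt identification, and a homotopical control statement for $Mf$---is exactly the paper's, and the second paragraph is fine. The gap is in your first paragraph, where you assert that ``the cofibrancy of $X$ lifts through $M$ (which is a continuous left adjoint by Theorem~\ref{commutation}) to give a cofibrant commutative $S$-algebra structure on $Mf$.'' Being a continuous left adjoint does \emph{not} imply preservation of cofibrant objects; for that you need $M$ to be a Quillen left adjoint, and the paper only establishes this under the additional hypothesis that $G$ is a group $\sI$-FCP (see the corollary at the end of Section~\ref{sec:thomleft}). The statement you are proving does not assume this, so you cannot conclude that $Mf$ is cofibrant as a commutative $S$-algebra in general.

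What the paper actually does in Section~\ref{sec:compute} is weaker but sufficient: it shows that when $X$ is a cell $\sL$-space and $f$ is good, the underlying $S$-module of $Mf$ has the homotopy type of an \emph{extended cell $S$-module} in the sense of Basterra. This is proved by a direct cell-by-cell induction using the isomorphism $M\bK_{BG}g\cong\tilde{\bC}Mg$ and an analysis of the filtration of the reduced monad, not by any formal adjoint argument. Extended cell $S$-modules are not cofibrant commutative $S$-algebras, but they do have the property that their point-set smash products represent the derived smash product, which is all that is needed for $|N^{\cyc}Mf|$ to model $THH(Mf)$. Once that is in hand, your chain of homeomorphisms goes through. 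So the fix is to replace the unjustified cofibrancy claim with this extended-cell-module argument (or, if you are willing to restrict to group-valued $G$, to invoke the Quillen left adjoint result explicitly).
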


Just as $R \otimes S^1$ is the cyclic bar construction in the category
of commutative $S$-algebras, for an $\sL$-space $X$ we can similarly
regard $X \otimes S^1$ as a cyclic bar construction
\cite[6.7]{basterra-mandell}.  Unlike commutative $S$-algebras,
$\sL$-spaces are tensored over based spaces and the tensor with an
unbased space is constructed by adjoining a disjoint basepoint.  Thus,
for an $\sL$-space $X$ it is preferable to think of the unbased tensor
$X \otimes S^1$ as the based tensor $X \otimes S^1_+$.  This
description allows us to construct a natural map to the free loop
space 
\[X \otimes S^1_+ \to L(X \otimes S^1)\]
which is a weak equivalence when $X$ is group-like.  Note that the
based tensor $X \otimes S^1$ is a model of the classifying space of
$X$, so that we have recovered the familiar relationship between
$N^{\cyc} X$ and $L(BX)$ \cite{bokstedt-hsiang-madsen}.  Furthermore,
in Section~\ref{sec:split} we use the stable splitting of $S^1_+$ to
provide an extremely useful splitting of $THH(Mf)$.

\begin{thm}\label{einfsplittingthm}
Let $f \colon X \rightarrow BG$ be a good map of $\sL$-spaces such
that $X$ is a cofibrant and group-like $\sL$-space.  Then there is a
weak equivalence of commutative $S$-algebras 
\[THH(Mf) \htp Mf \sma BX_+.\]
\end{thm}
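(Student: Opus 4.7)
The plan is to combine the preceding corollary with a geometric splitting of the cyclic bar construction at the Thom spectrum level. By the corollary, $THH(Mf) \htp M(f \otimes S^1)$ in the derived category, and Theorem~\ref{commutation} identifies $M(f \otimes S^1)$ with $Mf \otimes S^1$ as commutative $S$-algebras. Thus the task reduces to producing a weak equivalence of commutative $S$-algebras $M(f \otimes S^1) \htp Mf \sma BX_+$.

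I would first interpret the unbased tensor $X \otimes S^1$ as the based tensor $X \otimes S^1_+$, i.e., as the cyclic bar construction $B^{\cyc}(X)$, and exploit the equivalence $B^{\cyc}(X) \htp LBX$ valid when $X$ is group-like (as noted in the discussion preceding the theorem). The cofibration $S^0 \to S^1_+ \to S^1$ together with the collapse $S^1 \to *$ gives the stable splitting $\Sigma^\infty S^1_+ \htp S \vee \Sigma S$. Tensoring with the $\sL$-space $X$, using $X \otimes S^0 = X$ and $X \otimes S^1 \htp BX$ (the based tensor models the classifying space), and promoting the splitting by the $\sL$-multiplication, one produces an equivalence $X \otimes S^1_+ \htp X \times BX$ of $\sL$-spaces over $BG$ under which the tensored classifying map factors as $X \times BX \xrightarrow{\pi_1} X \xrightarrow{f} BG$. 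This mirrors the strict commutative case, where the shuffle $(x_0, \ldots, x_k) \mapsto (x_0 x_1 \cdots x_k,\, x_1, \ldots, x_k)$ gives a simplicial isomorphism $B^{\cyc}(X) \cong X \times BX$ compatible with the projection to $BG$, since the $\sL$-map condition on $f$ causes the $BG$-multiplication to collapse the product.

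Applying the Thom spectrum functor and invoking the standard pullback formula for Thom spectra then yields
\[
M(f \otimes S^1) \htp M(f \circ \pi_1) \cong (X \times BX)^{\pi_1^* f^* \xi} \cong Mf \sma BX_+,
\]
since $\pi_1^* f^* \xi$ is the external product of $f^* \xi$ with the trivial bundle over $BX$, whose Thomification is $Mf \sma BX_+$. Functoriality of $M$ and the preservation properties from Theorem~\ref{commutation} ensure this is a map of commutative $S$-algebras.

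The main obstacle is establishing the equivalence $X \otimes S^1_+ \htp X \times BX$ over $BG$ in the category of $\sL$-spaces, rather than merely at the level of underlying spaces. The group-like hypothesis is essential here: it is what permits passage from the cyclic bar construction to the free loop space, and ultimately to the product decomposition in which the $BX$-direction carries the trivial bundle. Verifying compatibility of the splitting with both the $\sL$-structure and the classifying map $f$ requires a careful analysis of the bar construction, which at the stable level is exactly what the splitting of $S^1_+$ encodes.
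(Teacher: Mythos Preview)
Your proposal is correct and follows essentially the same route as the paper: factor $f\otimes S^1_+$ as $X\otimes S^1_+ \xrightarrow{\theta} X\times (X\otimes S^1)\xrightarrow{\pi_1} X\xrightarrow{f} BG$, show $\theta$ is a weak equivalence, and then apply $M$ together with the projection formula $M(f\circ\pi_1)\cong Mf\sma (X\otimes S^1)_+$ and the identification $X\otimes S^1\simeq BX$. The one point you flag as the main obstacle---why $\theta$ is a weak equivalence of $\sL$-spaces---is handled in the paper not by a shuffle-map argument but by an infinite loop space machine: one shows that for cofibrant group-like $X$ the assignment $A\mapsto X\otimes A$ is the $\aW$-space underlying the spectrum $Z$ associated to $X$, hence $X\otimes A\simeq \Omega^\infty(Z\sma A)$, and then the stable splitting $Z\sma S^1_+\simeq Z\vee \Sigma Z$ gives the result directly; the factorization over $BG$ is then immediate by inspection since $\theta$ is already an $\sL$-map.
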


This theorem provides convenient formulas describing $THH$ for various
bordism spectra, notably \[THH(MU) \htp MU \sma BBU_+.\]  Furthermore,
we show that this splitting theorem holds when $f \colon X \rightarrow
BG$ is only an $E_2$ map, provided that the induced multiplicative
structure on $Mf$ ``extends to'' an $E_\infty$-structure.  In this
context, the result follows from a separate analysis which exploits
the multiplicative equivalence 
\[Mf \sma Mf \htp Mf \sma X_+\]
induced by the Thom isomorphism.  Note that in the statement of the
following theorem we do not require $X$ to be cofibrant.

\begin{thm}\label{e2splitting}
Let $\aC_2$ denote an $E_2$-operad augmented over the linear
isometries operad, and let $f \colon X \rightarrow BG$ be a good
$\aC_2$ map such that $X$ is group-like.  Assume there is a map
$\gamma \colon Mf \rightarrow M^\prime$ which is a weak equivalence of
homotopy commutative $S$-algebras such that $M^\prime$ is a
commutative $S$-algebra.  Then there is a weak equivalence of
$S$-modules 
\[THH(Mf) \htp Mf \sma BX_+.\]
\end{thm}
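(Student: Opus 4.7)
The plan is to exploit the multiplicative Thom equivalence $Mf \sma Mf \htp Mf \sma X_+$ and its iterates to reshape the cyclic bar construction computing $THH(Mf)$ into a two-sided bar construction whose realization is $Mf \sma BX_+$.

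First, I would establish the multiplicative Thom equivalence $Mf \sma Mf \htp Mf \sma X_+$ from the shear map $s \colon X \times X \to X \times X$, $(x,y) \mapsto (xy, y)$. Since $X$ is group-like, $s$ is a weak equivalence, and it intertwines the ``sum'' classifying map $X \times X \to BG$ underlying $Mf \sma Mf$ with the projection-based classifying map $X \times X \to X \to BG$ underlying $Mf \sma X_+$. The $\aC_2$ hypothesis on $f$ gives this equivalence enough coherence to be multiplicative. Iterating the shear yields a weak equivalence $Mf^{\sma(n+1)} \htp Mf \sma X_+^{\sma n}$ for each $n \geq 0$.

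Next I would verify that these levelwise equivalences assemble into a weak equivalence of simplicial $S$-modules from the cyclic bar $N^{\cyc}_\bullet Mf$ (whose realization is $THH(Mf)$) to the two-sided bar construction $Mf \sma B_\bullet(\ast, X, \ast)_+$ (whose realization is $Mf \sma BX_+$). Under the iterated shear, the inner face maps $d_i$ for $i < n$, which multiply adjacent $Mf$-factors in the cyclic bar, correspond to the bar face maps multiplying adjacent $X$-factors. The key computation is that the cyclic face map $d_n$, which multiplies the last with the first $Mf$-factor, is absorbed into the final shear and becomes the bar face map that drops the last $X$-factor; this uses the homotopy commutativity supplied by $\aC_2$. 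The hypothesis that $\gamma \colon Mf \to M^\prime$ is a weak equivalence to a commutative $S$-algebra is invoked here: it lets one transfer the simplicial construction to a strictly commutative setting, which ensures that the system of Thom equivalences can be chosen coherently and that the point-set smash products model the derived ones correctly. Passing to realizations then gives $THH(Mf) \htp Mf \sma BX_+$ as $S$-modules.

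The main obstacle will be the simplicial-level coherence of the iterated Thom equivalences, specifically verifying that the cyclic face map $d_n$ really does become the ordinary bar face map rather than introducing a nontrivial twist that would produce the weaker identification $THH(Mf) \htp Mf \sma LBX_+$. This is where the commutative replacement $M^\prime$ is essential, compensating for the fact that $Mf$ is only $\aC_2$: it provides the strict commutativity in which the needed coherences can be realized across all simplicial levels. Once these identifications are arranged, the splitting theorem follows from the identification of realizations above.
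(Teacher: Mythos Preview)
Your proposal shares the paper's central idea --- the Thom/shear equivalence $Mf\sma Mf\htp Mf\sma X_+$ --- but the execution is genuinely different, and as written it has a gap precisely at the point you flag.

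The paper does \emph{not} attempt a direct levelwise comparison of $N^{\cyc}_\bullet Mf$ with $Mf\sma B_\bullet(*,X,*)_+$.  Instead it passes to the description $THH(Mf)=Mf\sma^L_{Mf\sma Mf^{\op}}Mf$ and models this by the two-sided bar $B(Mf,Mf\sma Mf^{\op},Mf)$.  The cyclic face that worries you is absorbed into the bimodule structure and never has to be confronted separately.  The Thom equivalence is then invoked \emph{once}, as a weak equivalence of $S$-algebras $M'\sma Mf^{\op}\to M'\sma X^{\op}_+$ obtained by extending the composite $Mf^{\op}\to Mf^{\op}\sma X^{\op}_+\to M'\sma X^{\op}_+$ along the central map $M'\to M'\sma X^{\op}_+$.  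After this single replacement the bar construction visibly splits as $B(M',M',M')\sma B(S,\Sigma^\infty X^{\op}_+,S)\htp M'\sma BX_+$.  The role of $M'$ is thus quite specific: it supplies strict commutativity so that the centrality argument and the extension-of-scalars step go through, not a vague ``transfer to a commutative setting.''

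In your route, the problem is exactly the one you name: the iterated shear sends $d_n$ to a map that differs from the bar face by a cyclic permutation of the product in the $Mf$-slot, and homotopy commutativity alone does not produce a \emph{strict} simplicial map.  Saying ``use $M'$'' is not yet a proof: you must specify the map of simplicial objects and check the face identities on the nose, which in turn requires knowing that the Thom diagonal is a strict algebra map in the $M'$ world and that the relevant squares commute strictly, not up to homotopy.  The paper's bimodule reformulation is precisely the device that makes this bookkeeping unnecessary.  If you want to salvage your approach, the cleanest fix is to reinterpret your iterated shear as the single algebra equivalence $M'\sma Mf^{\op}\htp M'\sma X^{\op}_+$ inside the two-sided bar, at which point you have essentially reproduced the paper's argument.
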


Although the hypotheses of this theorem may seem strange, in fact this
situation arises in nature.  It has long been known that $H\Z / 2$ is
the Thom spectrum of an $E_2$ map $f \colon \Omega^2 S^3 \rightarrow BO$
\cite{cohen-may-taylor, mahowald}.  There is a similar construction of
$H\Z / p$ for odd primes due to Hopkins which is described in
\cite{thomified}.  Constructions of $H\Z$ as a Thom spectrum over
$\Omega^2 S^3\left<3\right>$ are also well-known
\cite{cohen-may-taylor, mahowald}, but these descriptions only yield
an $H$-space structure on $H\Z$.  

In Section~\ref{S:TEM}, we discuss a construction of $H\Z$ as the
Thom spectrum associated to an $E_2$ map.  Then
Theorem~\ref{e2splitting} allows us to recover the classical
computations of Bokstedt of $THH(\Z/2)$, $THH(\Z/p)$, and $THH(\Z)$.

\bigskip
These results appeared as part of the author's 2005 University of
Chicago thesis.  I would like to thank Peter May for his support and
suggestions throughout the conduct of this research.  I would also
like to express my gratitude to Michael Mandell --- this paper could
not have been written without his generous assistance.  In addition, I
would like thank Christian Schlichtkrull and Ralph Cohen for agreeing
to join forces in the preparation
of \cite{blumberg-cohen-schlichtkrull}.  The paper was improved by
comments from Christopher Douglas and Halvard Fausk.
     
\section{Colimit-preserving functors in categories of monadic
algebras}
\label{sec:coltech}

In this section, we prove Theorem~\ref{lifting}.  The theorem is
essentially a straightforward consequence of categorical results due
to Kelly describing the construction of colimits and indexed colimits
in enriched categories of monadic algebras.  We begin by reviewing the
relevant background material, largely following the exposition
of \cite{ekmm}.

Let $\aV$ denote a symmetric monoidal category, and let $\aC$ be a
category enriched over $\aV$.  In such a context we can define tensors
and cotensors (and more generally indexed colimits and limits).

\begin{defn}
Let $\aC$ be a category enriched over $\aV$.  Then $\aC$ is tensored
if there exists a functor $\otimes_{\aC} \colon \aC \times \aV \rightarrow
\aC$ which is continuous in each variable and such that there is an
isomorphism
\[\aC(X \otimes_{\aC} A, Y) \cong \aV(A, \aC(X,Y))\]
of objects of $\aA$.  There is a dual notion of cotensors.
\end{defn}

For example, both the category of based spaces and the category of
spectra are tensored over based spaces.  The tensor of a spectrum $X$
and a based space $A$ is $X \sma A$.  The cotensor of a spectrum $X$
and a based space $A$ is the mapping spectrum $F(A,X)$.  Notice that
we can define the tensor of a spectrum $X$ and an unbased space $B$ by
adjoining a disjoint basepoint to $B$ and taking the tensor with
respect to the enrichment in based spaces --- the tensor of a spectrum
$X$ and an unbased space $B$ is $X \sma B_+$.

In an enriched category, there are notions of indexed colimits and
limits which take the enrichment into account.  Tensors and cotensors
are examples of such indexed colimits and limits, and in the
topological setting are particularly important as a consequence of the
following result of Kelly \cite[7.2.6]{ekmm}.

\begin{thm}\label{t:kelly}
A topological category has all indexed colimits provided that it is
cocomplete and tensored.  Dually, a topological category has all
indexed limits provided it is complete and cotensored.
\end{thm}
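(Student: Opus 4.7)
The plan is to follow Kelly's standard argument, which reduces every indexed (weighted) colimit to a combination of ordinary conical colimits and tensors. First I would recall that, for an enriched diagram $F\colon \aD \to \aC$ and a weight $W\colon \aD^{\op}\to \aV$, the indexed colimit $\colim^W F$ is characterized by an enriched natural isomorphism
\[\aC(\colim^W F, Y) \;\cong\; [\aD^{\op},\aV](W, \aC(F-,Y)),\]
so the task is to exhibit an object of $\aC$ satisfying this universal property.

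Next I would propose the candidate furnished by the coend formula $\int^{d\in\aD} W(d)\otimes_{\aC} F(d)$, realized concretely as the coequalizer
\[\coprod_{\alpha\colon d\to d'} W(d') \otimes_{\aC} F(d) \;\rightrightarrows\; \coprod_{d\in\aD} W(d) \otimes_{\aC} F(d) \;\longrightarrow\; \colim^W F,\]
in which the two parallel arrows are induced, respectively, by $W(\alpha)\otimes\id$ and $\id\otimes F(\alpha)$. All the pieces are available by hypothesis: the tensors $W(d)\otimes_{\aC}F(d)$ exist because $\aC$ is tensored over $\aV$, and the coproducts and coequalizer exist because $\aC$ is cocomplete.

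The main step is to check that this coequalizer carries the correct universal property. Applying the contravariant hom-functor $\aC(-,Y)$ converts the coequalizer into an equalizer, and the tensor--hom adjunction
\[\aC(W(d)\otimes_{\aC} F(d), Y)\;\cong\;\aV(W(d), \aC(F(d), Y))\]
rewrites the resulting equalizer as the standard end-description of the object of enriched natural transformations $W \Rightarrow \aC(F-,Y)$. Performing the same computation $\aV$-internally, i.e.\ with enriched hom-objects in place of hom-sets, produces the sought-after isomorphism in $\aV$. The main obstacle is the bookkeeping: one must keep careful track of which coequalizer map records which variance, and one must verify that the end-formula for enriched natural transformations aligns correctly with the coequalizer description of the putative colimit. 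The dual claim about indexed limits follows by the same argument applied in $\aC^{\op}$, with tensors replaced by cotensors and coends by ends.
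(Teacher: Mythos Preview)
The paper does not supply its own proof of this statement: it is quoted as a result of Kelly, with a pointer to \cite[7.2.6]{ekmm}, and used as a black box thereafter. Your outline is precisely the standard Kelly argument one finds in the cited references, so there is nothing to compare against on the paper's side.

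One technical correction is worth making. Your coequalizer
\[
\coprod_{\alpha\colon d\to d'} W(d') \otimes_{\aC} F(d) \;\rightrightarrows\; \coprod_{d} W(d) \otimes_{\aC} F(d)
\]
is the presentation of the \emph{ordinary} coend, indexed by the underlying set of arrows of $\aD$. In the topologically enriched setting the indexing category $\aD$ is itself a $\aV$-category, and the correct source for the enriched coend is
\[
\coprod_{d,d'} \aD(d,d') \otimes_{\aC} \bigl(W(d') \otimes_{\aC} F(d)\bigr),
\]
with the two parallel maps recording the $\aV$-functoriality of $W$ and of $F$ through the action of the hom-space $\aD(d,d')$. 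When $\aD$ has nondiscrete hom-spaces these two coequalizers differ, and only the second carries the required enriched universal property. This is also where the tensoring hypothesis does real work beyond producing the objects $W(d)\otimes_{\aC}F(d)$: one needs tensors with the hom-spaces $\aD(d,d')$ as well. With this amendment your verification (apply $\aC(-,Y)$, use the tensor adjunction, recognise the end computing enriched natural transformations) goes through as written, and the dual statement for indexed limits follows by the same argument in $\aC^{\op}$.
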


For our application, we will need to understand the tensor in the
category of commutative $S$-algebras and the tensor in the category of
$E_\infty$ spaces.  A priori, it is not clear that either of these
categories is tensored.  Unlike in the case of spectra, there is not a
familiar construction which yields the tensor.  For that matter,
construction of colimits in these categories is not obvious either.
The key observation is that each of these categories can be regarded
as a category of algebras over a monad. 

Let $\bA \colon \aC \rightarrow \aC$ be a monad with multiplication $\mu$
and unit $\eta$.  Recall that an object $X$ in $\aC$ is an algebra
over $\bA$ if there is an action map $\psi \colon \bA X \rightarrow X$ such
that the following diagrams commute :
\[
\xymatrix{
\relax\bA \relax\bA X \ar[r]^-{\psi} \ar[d]^-{\mu} & \relax\bA X \ar[d]^-{\psi} & X \ar[r]^-{\eta} \ar[dr]^-{=} & \relax\bM X \ar[d]^-{\mu} \\
\relax\bA X \ar[r]^-{\psi} & X & & X\\
}.
\]

The category of commutative $S$-algebras is precisely the category of
algebras over a certain monad in $S$-modules, and the category of
$\sL$-spaces is the category of algebras over a certain monad in
based spaces; we will define these monads in Section~\ref{S:einf}.

A key observation of McClure and Hopkins \cite{hopkins}, further
developed in \cite{ekmm}, is that there are general constructions for
lifting colimits and tensors from a category $\aC$ to the category
$\aC[\bA]$ of algebras for a monad $\bA$ on $\aC$.  That is, colimits
and tensors in $\aC[\bA]$ can be constructed in terms of certain
colimits and tensors in $\aC$.  However, in order to utilize these
results a technical condition must be satisfied by the monad $\bA$,
which we will now recall \cite[2.6.5]{ekmm}.

\begin{defn}
Let $A$, $B$, and $C$ be objects of a category $\aC$.  A reflexive
coequalizer is a coequalizer diagram
\[
\xymatrix{
A \ar@<1ex>[r]^-e \ar@<-1ex>[r]_{f} & B \ar[r]^-g & C \\
}
\]
such that there exists a splitting map $h \colon B \rightarrow A$ such that
$e \circ h = \id$ and $f \circ h = \id$.
\end{defn}

In order for the lifting results to apply, $\bA$ must preserve
reflexive coequalizers.  In this situation, if $A$ and $B$ are
$\bA$-algebras, there is a unique structure of $\bA$-algebra on $C$
and moreover $C$ is the coequalizer of $A$ and $B$ in the category
$\aC[\bA]$ \cite[2.6.6]{ekmm}.  That is, we can form the coequalizer
in the category $\aC[\bA]$ by taking the coequalizer in $\aC$.  Now we
can state the lifting results.  Recall the following proposition from
EKMM \cite[2.7.4]{ekmm}.

\begin{prop}
Let $\bE$ be a continuous monad defined on a topologically enriched
category $\aC$.  If $\bE$ preserves reflexive coequalizers, then the
colimit in the category $\aC[\bE]$ of algebras over $\bE$ is given by
the following coequalizer :
\[
\xymatrix{
\relax\bE (\colim \bE R_i) \ar@<1ex>[rr]^{\bE(\colim \xi_i)} \ar@<-0.5ex>[rr]_{\mu \circ \bE \alpha} && \relax\bE(\colim R_i)
}
.
\]
Here $\mu$ is the composition map for the monad $\bE$, $\xi_i$ is the
action map $\bE R_i \rightarrow R_i$, and 
\[\alpha \colon \colim \bE R_i \rightarrow \bE \colim R_i\]
is obtained as follows.  For each $i$ there is a natural map $\iota_i
\colon R_i \rightarrow \colim R_i$, and $\alpha$ is specified as the unique
map whose composite with the natural map $\bE R_i \rightarrow
\colim \bE R_i$ is precisely $\bE$ applied to $\iota_i$.
The splitting of the coequalizer is obtained from the unit of the
monad. 
\end{prop}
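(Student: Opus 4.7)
The plan is to deduce the formula by expressing each $\bE$-algebra as a reflexive coequalizer of free algebras, commuting colimits with this presentation, and then invoking \cite[2.6.6]{ekmm} together with the preservation hypothesis to transfer the coequalizer from $\aC$ to $\aC[\bE]$.

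First I would record the standard observation that each $\bE$-algebra $R$ with action $\xi$ arises as the reflexive coequalizer in $\aC[\bE]$
\[
\xymatrix{
\bE\bE R \ar@<0.5ex>[r]^-{\mu_R} \ar@<-0.5ex>[r]_-{\bE\xi} & \bE R \ar[r]^-{\xi} & R
}
\]
with common section $\bE\eta_R \colon \bE R \to \bE\bE R$; reflexivity and the coequalizer property are immediate from the monad and algebra axioms $\mu \circ \bE\eta = \id$ and $\xi \circ \eta = \id$. Applying this presentation functorially in $i$, and using the free--forgetful adjunction to observe that colimits in $\aC[\bE]$ of free algebras are themselves free (obtained by applying $\bE$ to the colimit taken in $\aC$), I would assemble the families $\{\mu_{R_i}\}$ and $\{\bE\xi_i\}$ into parallel arrows on the relevant colimits. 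The universal map $\alpha \colon \colim \bE R_i \to \bE(\colim R_i)$ in the statement is the comparison morphism supplied by the universal property of $\colim \bE R_i$, and it is precisely the data needed to identify the two parallel arrows with $\mu \circ \bE\alpha$ and $\bE(\colim \xi_i)$.

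Next I would verify that this parallel pair is reflexive in $\aC$: the common section is $\bE$ applied to the map $\colim R_i \to \colim \bE R_i$ induced from the componentwise units $\eta_{R_i}$, and the splitting is again an application of the unit axioms. By the preservation hypothesis and \cite[2.6.6]{ekmm}, the coequalizer of this pair computed in $\aC$ carries a canonical $\bE$-algebra structure compatible with the quotient map, and this algebra is $\colim R_i$ in $\aC[\bE]$ by comparing universal properties against a test cocone $\{R_i \to S\}$.

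The main obstacle is the bookkeeping needed to identify the two parallel arrows with $\mu \circ \bE\alpha$ and $\bE(\colim \xi_i)$ and to exhibit the common section in the correct form; the remaining steps are formal consequences of the monad and algebra axioms together with the cited results.
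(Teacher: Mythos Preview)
The paper does not prove this proposition at all; it is quoted verbatim as background from \cite[2.7.4]{ekmm}, so there is no argument here to compare against. Your outline is correct and is essentially the standard proof one finds in the cited source: present each algebra by its canonical reflexive coequalizer $\bE\bE R_i \rightrightarrows \bE R_i \to R_i$, use that the free functor $\bE$ is a left adjoint so that colimits of free algebras in $\aC[\bE]$ are free on the underlying colimit in $\aC$, interchange the two colimits, and then invoke \cite[2.6.6]{ekmm} to compute the resulting reflexive coequalizer in $\aC$. The identifications you flag as bookkeeping---that the assembled $\mu_{R_i}$ become $\mu\circ\bE\alpha$ by naturality of $\mu$, that the assembled $\bE\xi_i$ become $\bE(\colim\xi_i)$ because $\bE$ is a left adjoint, and that the section is $\bE(\colim\eta_{R_i})$---are all correct and routine.
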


There is a related technique for constructing tensors as appropriate
coequalizer diagrams via the following proposition from EKMM
\cite[7.2.10]{ekmm}.

\begin{prop}
Let $\bE$ be a continuous monad defined on a topologically enriched
category $\aC$.  If $\bE$ preserves reflexive coequalizers, then the
tensor in the category $\aC[\bE]$ of algebras over $\bE$ is given by
the following coequalizer : 

\[
\xymatrix{
\relax\bE (\bE X \otimes A) \ar@<1ex>[rr]^{\bE(\xi \otimes \id)} \ar@<-0.5ex>[rr]_{\mu \circ \bE \nu} && \relax\bE (X \otimes A)
}
,
\]
where $\nu \colon \bE X \otimes A \rightarrow \bE(X \otimes A)$ is the adjoint of composite
\[A \rightarrow \aC(X, X \otimes A) \rightarrow \aC(\bE X, \bE(X \otimes A)).\]
Here the first arrow is the adjoint of the identity map.
\end{prop}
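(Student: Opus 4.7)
The plan is to verify directly that the proposed coequalizer $Q$ satisfies the universal property of the tensor: $\aC[\bE](Q, Z) \cong \aV(A, \aC[\bE](X, Z))$ naturally in $Z$. That $Q$ exists in $\aC[\bE]$ is immediate from the preceding proposition once one verifies the parallel pair is reflexive. A common section is $\bE(\eta_X \otimes \id_A) \colon \bE(X \otimes A) \to \bE(\bE X \otimes A)$: composing with $\bE(\xi \otimes \id)$ gives $\bE((\xi \circ \eta_X) \otimes \id_A) = \id$ by the algebra unit axiom, while $\nu \circ (\eta_X \otimes \id_A) = \eta_{X \otimes A}$ follows directly from the adjoint definition of $\nu$ together with the naturality of $\eta$, whence $(\mu \circ \bE\nu) \circ \bE(\eta_X \otimes \id_A) = \mu \circ \bE\eta_{X \otimes A} = \id$ by the monad unit axiom.

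For the universal property, the coequalizer definition of $Q$, the free-forgetful adjunction $\aC[\bE](\bE W, Z) \cong \aC(W, UZ)$, and the tensor-cotensor adjunction $\aC(W \otimes A, UZ) \cong \aV(A, \aC(W, UZ))$ in $\aC$, combined in sequence, express $\aC[\bE](Q, Z)$ as the equalizer of a pair of maps $\aV(A, \aC(X, UZ)) \rightrightarrows \aV(A, \aC(\bE X, UZ))$. Continuity of $\aV(A, -)$ pulls this out as $\aV(A, E)$, where $E$ is the equalizer in $\aV$ of the induced pair $\aC(X, UZ) \rightrightarrows \aC(\bE X, UZ)$. It then suffices to identify these two induced maps as the pair whose equalizer defines algebra morphisms, namely ``precompose with $\xi$'' and ``apply $\bE$ and post-compose with the action $\psi_Z$ of $Z$''.

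The first identification is routine: $\bE(\xi \otimes \id)$ induces precomposition with $\xi \otimes \id_A$ on $\aC(X \otimes A, UZ) \to \aC(\bE X \otimes A, UZ)$, which under tensor-hom becomes precomposition with $\xi$. For $\mu \circ \bE\nu$ the adjunct under $\aC[\bE](\bE W_1, \bE W_2) \cong \aC(W_1, \bE W_2)$ is $\nu$, since $(\mu \circ \bE\nu) \circ \eta_{\bE X \otimes A} = \mu \circ \eta_{\bE(X \otimes A)} \circ \nu = \nu$ by naturality of $\eta$ and the unit axiom. The induced map $\aC(X \otimes A, UZ) \to \aC(\bE X \otimes A, UZ)$ therefore sends $g$ to $\psi_Z \circ \bE g \circ \nu$, and unwinding the definition of $\nu$ as adjoint to the composite $A \to \aC(X, X \otimes A) \to \aC(\bE X, \bE(X \otimes A))$, together with tensor-hom, identifies this at the level of $\aC(X, UZ) \to \aC(\bE X, UZ)$ with the map $g \mapsto \psi_Z \circ \bE g$.

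The main obstacle is the bookkeeping in the third paragraph, since three adjunctions are intertwined (the monadic, the tensor-cotensor in $\aC$, and the adjoint definition of $\nu$) together with the monad structure maps $\eta$ and $\mu$. Once the two maps in $E$ are identified, $E = \aC[\bE](X, Z)$ by the standard characterization of algebra morphisms as $\aC$-morphisms intertwining the actions, and the desired isomorphism $\aC[\bE](Q, Z) \cong \aV(A, \aC[\bE](X, Z))$ follows, exhibiting $Q$ as the tensor $X \otimes_{\aC[\bE]} A$.
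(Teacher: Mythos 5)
Your argument is correct and is essentially the standard one: the paper gives no proof of this proposition but quotes it from EKMM (VII.2.10), whose proof is exactly this verification of the universal property via the reflexivity check, the free--forgetful and tensor--cotensor adjunctions, and the identification of the induced pair with the equalizer defining $\aC[\bE](X,Z)$. Your identifications of the section $\bE(\eta_X\otimes\id)$ and of the two induced maps as $\xi^*$ and $g\mapsto\psi_Z\circ\bE g$ are accurate, so nothing further is needed.
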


For our application, we note that the relevant monads preserve
reflexive coequalizers and so the preceding theorems construct the
tensors and colimits in the category of commutative $S$-algebras and
the category of $E_\infty$-spaces.  The limits and cotensors are
inherited from the base categories of $S$-modules (and hence spectra)
and based spaces respectively.

We are now ready to prove Theorem~\ref{lifting}.  Let
$F \colon \aC \rightarrow \aD$ be a functor between topological
categories, let $\bA \colon \aC \rightarrow \aC$ be a monad on $\aC$,
and let $\bB \colon \aD \rightarrow \aD$ be a monad on $\aD$.  The
following easy lemma provides a simple condition for $F$ to yield a
functor on the associated categories of algebras,
$F \colon \aC[\bA] \to \aD[\bB]$.

\begin{lem}\label{l:monad-comm}
Let $\phi \colon  \bB F(X) \cong F(\bA X)$ be a natural isomorphism such
that the following diagrams commute for any object $X$ of $\aC$.
\[
\xymatrix{
\relax\bB F(X) \ar[r]^-{\phi} & F(\relax\bA X) & \relax\bB \relax\bB F(X) \ar[r]^-{\mu_B} \ar[d]^-{\phi} & \relax\bB F(X) \ar[d]^-{\phi} \\
& F(X) \ar[ul]^-{\eta_B} \ar[u]_-{F(\eta_A)} & F(\relax \bA \relax \bA X) \ar[r]^-{F(\mu_A)} & F(\bA X) \\
}
\]

Then if $X$ is a $\bA$-algebra in $\aC$ with action map $\psi \colon \bA X
\rightarrow X$, $F(X)$ is a $\bB$-algebra in $\aD$ with action map
\[
\xymatrix{
\relax\bB F(X) \cong F(\bA X) \ar[r]^-{F(\psi)} & F(X)
}
.
\]
Therefore $F$ yields a functor from $\aC[\bA]$ to $\aC[\bB]$.
\end{lem}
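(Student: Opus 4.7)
The plan is a pure diagram chase, verifying the two algebra axioms for the proposed action map $\beta \colon \bB F(X) \xrightarrow{\phi} F(\bA X) \xrightarrow{F(\psi)} F(X)$ and then checking functoriality by naturality of $\phi$.

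First I would write down the proposed $\bB$-algebra structure on $F(X)$ as the composite $\beta = F(\psi)\circ\phi$ and verify the unit axiom. Expanding $\beta\circ\eta_B$ gives $F(\psi)\circ\phi\circ\eta_B$. The left triangle in the hypothesized commutative diagram says $\phi\circ\eta_B = F(\eta_A)$, so this composite equals $F(\psi)\circ F(\eta_A) = F(\psi\circ\eta_A)$. Since $X$ is an $\bA$-algebra, $\psi\circ\eta_A = \id_X$, so applying $F$ gives $\id_{F(X)}$, which is the desired unit axiom.

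Next I would verify the associativity axiom, namely that the two composites $\bB\bB F(X) \rightrightarrows F(X)$ obtained by either first applying $\mu_B$ then $\beta$, or first applying $\bB\beta$ then $\beta$, coincide. The first composite unwraps as $F(\psi)\circ\phi\circ\mu_B$. Using the right square of the hypothesis, $\phi\circ\mu_B = F(\mu_A)\circ\phi_{\bA X}\circ \bB\phi$, where $\phi_{\bA X}\colon \bB F(\bA X)\to F(\bA\bA X)$ is the component of $\phi$ at $\bA X$. On the other hand, the second composite $\beta\circ\bB\beta$ expands to $F(\psi)\circ\phi\circ\bB F(\psi)\circ\bB\phi$; applying naturality of $\phi$ to the morphism $F(\psi)$ replaces $\phi\circ \bB F(\psi)$ by $F(\bA\psi)\circ\phi_{\bA X}$, so the composite becomes $F(\psi)\circ F(\bA\psi)\circ\phi_{\bA X}\circ\bB\phi = F(\psi\circ\bA\psi)\circ\phi_{\bA X}\circ\bB\phi$. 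Since $X$ is an $\bA$-algebra, $\psi\circ\bA\psi = \psi\circ\mu_A$, so both composites agree after pre-composing with $\phi_{\bA X}\circ\bB\phi$, giving equality.

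Finally, to see that $F$ lifts to a functor $\aC[\bA]\to\aD[\bB]$, I would check that if $f\colon X\to Y$ is a map of $\bA$-algebras (so $\psi_Y\circ \bA f = f\circ\psi_X$), then $F(f)$ intertwines the constructed $\bB$-actions. This is a short chase: $F(f)\circ\beta_X = F(f)\circ F(\psi_X)\circ\phi_X = F(\psi_Y\circ\bA f)\circ\phi_X = F(\psi_Y)\circ F(\bA f)\circ\phi_X$, and then naturality of $\phi$ at $f$ rewrites $F(\bA f)\circ\phi_X = \phi_Y\circ \bB F(f)$, yielding $\beta_Y\circ \bB F(f)$ as required. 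I do not anticipate any genuine obstacle; the argument is entirely formal, and the only point worth care is being explicit about which naturality instance of $\phi$ is being invoked in the associativity step.
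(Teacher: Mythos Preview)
Your diagram chase is correct and is exactly the routine verification one expects; the paper itself omits the proof, calling it an ``easy lemma,'' so there is no alternative argument to compare against. One tiny wording fix: in the associativity step you invoke ``naturality of $\phi$ at the morphism $F(\psi)$,'' but $\phi$ is natural in objects of $\aC$, so the relevant naturality square is at $\psi\colon \bA X\to X$ (your resulting equation $\phi_X\circ \bB F(\psi)=F(\bA\psi)\circ\phi_{\bA X}$ is of course the correct one).
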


Now we prove the main technical result of the section.  Suppose we are
in the situation described in the preceding lemma, with the additional
assumption that $\aC$ and $\aD$ are topological categories.

\begin{thm}\label{t:pres}
Let $\aC$ and $\aD$ be cocomplete topological categories, and $\bA \colon \aC
\rightarrow \aC$ and $\bB \colon \aD \rightarrow \aD$ continuous monads.
Further suppose that there is a continuous functor $F \colon \aC
\rightarrow \aD$ which satisfies the hypothesis of the preceding lemma
and therefore yields a functor $F \colon \aC[\bA] \rightarrow \aD[\bB]$.
\begin{enumerate}
\item{If $F \colon \aC \rightarrow \aD$ preserves colimits and tensors, and the monads
$\bA$ and $\bB$ preserve reflexive coequalizers, then $F \colon \aC[\bA]
\rightarrow \aD[\bB]$ preserves colimits and tensors in $\aC[\bA]$.
Therefore $F$ preserves all indexed colimits in $\aC$.}
\item{If furthermore $F$ is a left adjoint as a functor from $\aC$ to
  $\aD$, then $F$ induces a left adjoint from $\aC[\bA]$ to $\aD[\bB]$.}
\end{enumerate}
\end{thm}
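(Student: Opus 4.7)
The plan is to exploit the two reflexive-coequalizer descriptions of colimits and tensors in a category of algebras given in the two EKMM propositions quoted just before the theorem. Those formulas express $\aC[\bA]$-colimits and $\aC[\bA]$-tensors purely in terms of the monad $\bA$ together with ordinary colimits and tensors in $\aC$ (and analogously for $\aD[\bB]$), so the hypotheses that $F$ preserves underlying colimits and tensors and intertwines the monads via the natural isomorphism $\phi$ from Lemma~\ref{l:monad-comm} should make the two coequalizer formulas transport to one another under $F$.

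For part~(i), I would start from the coequalizer computing $\colim^{\aC[\bA]} R_i$ given in the first quoted proposition and apply $F$. The assumption that $F$ preserves underlying colimits, combined with iterated application of $\phi$, yields natural isomorphisms $F\bA(\colim \bA R_i) \cong \bB(\colim \bB FR_i)$ and $F\bA(\colim R_i) \cong \bB(\colim FR_i)$. The compatibility of $\phi$ with the monad multiplication $\mu$ ensures that the two parallel maps $\bA(\colim \xi_i)$ and $\mu \circ \bA\alpha$ are carried to the corresponding parallel maps $\bB(\colim F\xi_i)$ and $\mu \circ \bB\alpha'$ of the coequalizer that computes $\colim^{\aD[\bB]} FR_i$, while compatibility with $\eta$ takes care of the reflexive splitting. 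Since $F$ preserves colimits (hence in particular reflexive coequalizers), the image diagram is still a coequalizer, so $F_{\bM}$ preserves colimits. The tensor case runs identically, using the tensor coequalizer formula in place of the colimit formula and invoking the assumption that $F$ preserves tensors. Kelly's Theorem~\ref{t:kelly} then upgrades these two kinds of preservation to preservation of all indexed colimits in $\aC[\bA]$.

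For part~(ii), suppose $F$ has a right adjoint $G \colon \aD \to \aC$ with unit $\eta'$ and counit $\epsilon'$. I would construct the lifted right adjoint $G_{\bM} \colon \aD[\bB] \to \aC[\bA]$ explicitly: for each $\bB$-algebra $(Y,\psi)$, equip $GY$ with the $\bA$-action obtained as the adjoint of the composite $F \bA G Y \xrightarrow{\phi} \bB FGY \xrightarrow{\bB\epsilon'_Y} \bB Y \xrightarrow{\psi} Y$. A direct diagram chase using the compatibility of $\phi$ with $\mu$ and $\eta$ verifies that this really is an $\bA$-action, and the underlying adjunction bijection $\aC(X, GY) \cong \aD(FX, Y)$ then restricts to a bijection on algebra maps by construction, giving $F_{\bM} \dashv G_{\bM}$. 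The main obstacle throughout is essentially bookkeeping: in part~(i) one must trace carefully through the naturality of the comparison maps $\alpha$ and the splittings appearing in the EKMM coequalizer formulas to confirm that $\phi$ intertwines the $\aC[\bA]$-coequalizer with the $\aD[\bB]$-coequalizer of the diagram $\{FR_i\}$, and in part~(ii) one must verify that the associativity and unitality of the action defined on $GY$ are precisely encoded by the two commuting squares of Lemma~\ref{l:monad-comm}.
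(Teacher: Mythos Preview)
Your argument for part~(i) is essentially identical to the paper's: both apply $F$ to the EKMM reflexive-coequalizer formulas, use preservation of underlying colimits and tensors together with the isomorphism $\phi$ to rewrite the result as the corresponding coequalizer in $\aD[\bB]$, and then invoke Kelly's theorem to pass to all indexed colimits.

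For part~(ii) your route diverges from the paper's.  The paper does not construct the right adjoint explicitly; instead it cites two black boxes from Borceux.  First, a continuous functor between tensored topological categories is a continuous left adjoint as soon as it preserves tensors and is an ordinary left adjoint (Borceux~6.7.6); tensor preservation is already established in part~(i).  Second, to obtain the ordinary adjoint, the paper appeals to the adjoint lifting theorem (Borceux~4.5.6), using that the vertical forgetful functors $U,V$ are monadic, that $F\colon\aC\to\aD$ is a left adjoint, and that $\aC[\bA]$ has (reflexive) coequalizers.  Your approach instead builds $G_{\bM}$ by hand via the mate construction: the monad isomorphism $\phi\colon \bB F \cong F\bA$ transports under the adjunction $F\dashv G$ to a lax monad morphism $\lambda\colon \bA G \Rightarrow G\bB$, and composing with the structure map gives the $\bA$-action on $GY$.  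This is the standard doctrinal-adjunction argument and is perfectly correct.  What you gain is an explicit description of the right adjoint and a proof that does not rely on the existence of coequalizers in $\aC[\bA]$ (though that hypothesis is present anyway); what the paper gains is brevity, since the verification that your $\bA$-action on $GY$ is associative and unital, and that the hom-set bijection restricts to algebra maps, is exactly the bookkeeping you flag as the ``main obstacle'' and which the cited theorems absorb.
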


\begin{proof}
First, we handle the issue of colimits.  We can apply
\cite[2.7.4]{ekmm} to describe colimits in the category $\aC[\bA]$ of
$\bA$-algebras.  Given a diagram of $\{R_i\}$ of $\bA$-algebras, we
can describe $F(\colim R_i)$ as $F$ applied to the reflexive
coequalizer which creates colimits in the category $\aC[\bA]$.

\[
F\left(
\xymatrix{
\relax\bA(\colim \bA R_i) \ar@<1ex>[rr]^{\bA(\colim \xi_i)}
\ar@<-0.5ex>[rr]_{\mu \circ \bA \alpha} && \relax\bA(\colim R_i)
}
\right)
.
\]
Since $F$ commutes with colimits in $\bA$, this is isomorphic to the
reflexive coequalizer :

\[
\xymatrix{
\relax\bB(\colim \bB FR_i)
\ar@<1ex>[rr]^{\bB(\colim F(\xi_i))} \ar@<-0.5ex>[rr]_{\mu
  \circ \bB F(\alpha)} && \relax\bB(\colim FR_i))
}
.
\]
This is precisely the colimit of the diagram $\{FR_i\}$ in the
category of $\bB$-algebras by \cite[2.7.4]{ekmm} once again.

Next, we consider tensors.  We can express $F(X \otimes A)$ as $F$ applied to
the reflexive coequalizer which creates the tensors in the category
$\aC[\bA]$.
\[
F\left(
\xymatrix{
\relax\bA (\bA X \otimes A) \ar@<1ex>[rr]^{\bA(\xi \otimes \id)} \ar@<-0.5ex>[rr]_{\mu \circ \bA \nu} && \relax\bA (X \otimes A)
}
\right)
.
\]
We can rewrite this expression using the fact that $F$ commutes with
colimits in $\aA$, as follows.
\[
\xymatrix{
\relax\bB F(\bA X \otimes A) \ar@<1ex>[rr]^{\bB (\xi \otimes \id)} \ar@<-0.5ex>[rr]_{\mu \circ \bB \nu} && \relax\bB F(X \otimes A)
}
.
\]
As $F$ commutes with tensors in $\aA$, this becomes :
\[
\xymatrix{
\relax\bB (\bB FX \otimes A) \ar@<1ex>[rr]^{\bB (\bB (\xi \otimes \id)} \ar@<-0.5ex>[rr]_{\mu \circ \bB \nu} && \relax\bB (FX \otimes A)
}
.
\]
This is precisely the diagram expressing the tensor $FX \otimes A$
in the category $\aC[\bB]$.  It is now a consequence of theorem
\ref{t:kelly} that $M$ preserves all indexed colimits.  

Finally, assume that $F \colon \aC \rightarrow \aD$ is a left adjoint.
There is a diagram of categories:
\[
\xymatrix{
\relax\aC[\relax\bA] \ar[r]^-F \ar@<-0.5ex>[d]_U &
\relax\aD[\relax\bB] \ar@<-0.5ex>[d]_V \\ 
\relax\aC \ar@<-0.5ex>[u]_G \ar[r]^-F & \relax\aD \ar@<-0.5ex>[u]_G. \\
}
\]
Here $U$ and $V$ denote forgetful functors, and $G$ denotes the free
algebra functors.  The square commutes in the sense that $F \circ G =
G \circ F$ and $F \circ U = V \circ F$.  To show that $F \colon \aC[\bA]
\rightarrow \aD[\bB]$ is a continuous left adjoint, it suffices to
show that $F$ preserves tensors and $F$ is a left adjoint when the
enrichment is ignored \cite[6.7.6]{borceux}.  We know that the former
holds, and since $F \colon \aA \rightarrow \aB$ is a left adjoint by
hypothesis and $\aC[\bA]$ has coequalizers, we can apply the adjoint
lifting theorem \cite[4.5.6]{borceux} and conclude the latter.
\end{proof}

\section{Parameterized spaces and operadic algebras}\label{S:einf}

In this section, we review the definitions of the domain and range
categories of the Lewis-May operadic Thom spectrum functor.  We begin
by discussing operadic algebras.  

\subsection{Review of operadic algebras}

Let $\sI$ be the (unbased) category of finite-dimensional or
countably-infinite real inner product spaces and linear isometries.
This is a symmetric monoidal category under the direct sum.

\begin{defn}
Let $U^j$ be the direct sum of $j$ copies of $U$ (an
infinite-dimensional real inner product space), and let $\sL(j)$ be
the mapping space $\sI(U^j,U)$.  The action of $\Sigma_j$ on $U^j$ by
permutation induces an action of $\Sigma_j$ on $\sL(j)$.  There are
maps
\[\gamma \colon \sL(k) \times \sL(j_1) \times \ldots \times \sL(j_k) \rightarrow \sL(j_1 + \ldots + j_k)\]
given by $\gamma(g;f_1, \ldots, f_k) = g \circ (f_1 \oplus \ldots
\oplus f_k)$.  The spaces $\sL(j)$ form an operad, which we will refer
to as the linear isometries operad.
\end{defn}

The properties of the linear isometries operad have been explored at
length, notably in section XI of \cite{ekmm}.  Recall that $\sL$ is an
$E_\infty$-operad, as $\sL(j)$ is contractible, $\sL(1)$ contains the
identity, $\sL(0)$ is a point, and $\Sigma_n$ acts freely on $\sL(n)$.
We can consider both based spaces and spectra which admit actions of
$\sL$.  We will make frequent use of the fact that for any operad $\aO$,
there is an associated monad $\bO$ such that objects $X$ with actions
by $\aO$ are precisely algebras over $\bO$ \cite{may-geom}.

A space $X$ with an action of the operad $\sL$ is the same as an
algebra over a certain monad $\bK$ on the category of based spaces.
Since the monad $\bK$ preserves reflexive coequalizers, standard
lifting techniques suffice to show the following theorem
\cite{hopkins}, \cite[6.2]{basterra-mandell}.

\begin{thm}
The category $\aT[\bK]$ of $\sL$-spaces admits the structure of a
topological model category.  Fibrations and weak equivalences are
created in the category $\aT$, and cofibrations are defined as having
the left-lifting property with respect to acyclic fibrations.
\end{thm}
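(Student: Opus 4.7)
The plan is to apply the standard transferred model structure along the free-forgetful adjunction between $\aT$ and $\aT[\bK]$, as developed in EKMM and in Basterra--Mandell. The category $\aT$ is a cofibrantly generated topological model category with generating cofibrations $I$ and generating acyclic cofibrations $J$. The free algebra functor $\bK$ is left adjoint to the forgetful functor $U \colon \aT[\bK] \to \aT$, so I would declare a map $f$ in $\aT[\bK]$ to be a weak equivalence or a fibration precisely when $Uf$ is such in $\aT$, take cofibrations to be defined by the left lifting property against acyclic fibrations, and propose $\bK I$ and $\bK J$ as generating sets.

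First I would verify that $\aT[\bK]$ is bicomplete. Limits are created by $U$, and colimits are constructed via the reflexive coequalizer formula recalled in Section~\ref{sec:coltech}, which applies precisely because $\bK$ preserves reflexive coequalizers. The 2-of-3 axiom, closure of the three distinguished classes under retracts, and the half of the lifting axiom built into the definition of cofibrations follow at once from the corresponding facts in $\aT$. The adjunction $(\bK, U)$ translates right-lifting against $\bK I$ (resp.\ $\bK J$) in $\aT[\bK]$ into right-lifting against $I$ (resp.\ $J$) in $\aT$, which identifies the $\bK I$-injectives with the acyclic fibrations and the $\bK J$-injectives with the fibrations. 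The small object argument then produces a $\bK I$-cell/$\bK I$-inj factorization, since the domains of maps in $\bK I$ are small relative to cell inclusions in $\aT[\bK]$ by continuity of $\bK$ together with the standard smallness statements in $\aT$.

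The one substantive step, and the usual main obstacle for transferred model structures, is to show that every relative $\bK J$-cell complex is a weak equivalence; this simultaneously yields the acyclic factorization and the remaining lifting axiom. I would handle it by the path-object argument: the category $\aT[\bK]$ is cotensored over based spaces with cotensors inherited from $\aT$, so for any $\bK$-algebra $X$ the cotensor of $X$ with $I_+$ provides a functorial path object whose two projections to $X$ are acyclic fibrations. Combined with the observation that $U$ preserves filtered colimits along inclusions (because $\bK$ is built from smash products with the spaces $\sL(j)_+$ and is therefore continuous and finitary on such systems), a routine diagram chase identifies pushouts of maps in $\bK J$ along arbitrary $\bK$-algebra maps, and transfinite compositions thereof, as weak equivalences.

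Finally, the topological enrichment (SM7) axiom reduces to the corresponding axiom in $\aT$ via the reflexive coequalizer description of tensors in $\aT[\bK]$ from EKMM 7.2.10: the pushout-product of a cofibration in $\aT[\bK]$ with a cofibration of based spaces is assembled from pushout-products in $\aT$, where the axiom is classical, and likewise for the cotensor formulation on the fibration side.
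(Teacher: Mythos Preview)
Your proposal is correct and is precisely what the paper has in mind: the paper does not actually prove this theorem but instead attributes it to \cite{hopkins} and \cite[6.2]{basterra-mandell}, remarking only that ``since the monad $\bK$ preserves reflexive coequalizers, standard lifting techniques suffice.''  What you have written is an accurate sketch of those standard lifting techniques --- the transferred model structure along the free--forgetful adjunction, with the path-object argument supplying the key acyclicity step --- so there is nothing to compare and no gap to address.
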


Since $\sL$ is an $E_\infty$ operad, we can functorially associate a
spectrum $Z$ to an $\sL$-space $X$ such that the map $X \rightarrow
\Omega^\infty Z$ is a group completion.  When $\pi_0(X)$ is a group
and not just a monoid, this map is a weak equivalence.  Such
$\sL$-spaces $X$ for which $\pi_0(X)$ is a group are said to be
group-like.

Similarly, the category of $E_\infty$-ring spectra can be described as
a category of algebras over monads, following \cite[2.4]{ekmm}.  Let
$\aS$ denote the category of coordinate-free spectra \cite{lms}.  For
clarity, we emphasize that $\aS$ is not a symmetric monoidal category
of spectra prior to passage to the homotopy category.  An
$E_\infty$-ring spectrum structured by the operad $\sL$ is an algebra
over a certain monad $\bC$ in $\aS$.

Since the Thom spectrum associated to an object $f$ of $\aT / BG$ will
have a natural unit $S \rightarrow Mf$ induced by the inclusion of the
basepoint, we also consider the category $\aS \backslash S$ of
unital spectra.  In this setting, an $E_\infty$-ring spectrum $X$ over
the operad $\sL$ is the same as an algebra over the monad
$\tilde{\bC}$, where $\tilde{\bC} X$ is a ``reduced'' version of $\bC$
quotiented to ensure that the unit provided by the algebra structure
coincides with the existing unit.

There is a close relationship between the category of algebras over
$\bC$ and algebras over $\tilde{\bC}$ \cite[2.4.9]{ekmm}.
The category $\aS \backslash S$ is itself a category of algebras over
$\aS$ for the monad $\bU$ which takes $X$ to $X \vee S$.  The monad
$\bC$ is precisely the composite monad $\tilde{\bC} \bU$, and in this
situation the categories of algebras are equivalent
\cite[2.6.1]{ekmm}.  Therefore the two notions of $E_\infty$-ring
spectrum we have described are equivalent.  In the language of
\cite{ekmm}, $\tilde{\bC}$ is the ``reduced'' monad associated to the
monad $\bC$.  Both of these monads preserve reflexive coequalizers.

Finally, given an $E_\infty$-ring spectrum, the functor $S \sma_{\sL}
-$ converts it to a weakly equivalent commutative
$S$-algebra \cite[2.3.6,2.4.2]{ekmm}.  Moreover, $S \sma_{\sL} -$ is a
continuous left adjoint.

\subsection{Parametrized operadic algebras}

Now we move on to consider the category of spaces over a fixed base
space $B$.  The category $\aU / B$ has objects maps $p \colon
X \rightarrow B$, where $X$ and $B$ are objects of $\aU$.  A morphism
$(p_1 \colon X \rightarrow B) \rightarrow (p_2 \colon Y \rightarrow
B)$ is a map $f \colon X \rightarrow Y$ such that $p_2 f = p_1$.  The
properties of this category have been investigated in a variety of
places \cite{lewis, intermont-johnson}, \cite[7.1]{lms}.  In
particular, this is a topological category where the tensor of
$p \colon X \rightarrow B$ and an unbased space $A$ is given by the
composite  
\[
\xymatrix{
X \times A \ar[r]^{\pi_1} & X \ar[r]^p & B \\
}
\]
(where $\pi_1$ is the projection onto the first factor).

Since we will be interested in spaces which admit operad actions, we
also consider the related category of based spaces over $B$.  This is
the category $\aT / B$, defined in the same fashion as $\aU / B$,
replacing spaces with based spaces and requiring that the maps be 
based.  The category $\aT / B$ inherits the structure of a category
tensored over unbased spaces from $\aU /B$, where the tensor of
$X \rightarrow B$ and an unbased space $A$ is given by $X \sma
A_+ \rightarrow B$. 

Colimits in $\aT / B$ are formed as follows.  Given a diagram $D
\rightarrow \aT / B$, via the forgetful functor we obtain a
diagram $D \rightarrow \aT / B \rightarrow \aT$.  The colimit over $D
\rightarrow \aT / B$ is computed by taking the colimit of this
diagram in $\aT$ and using the induced map to $B$ given by the
universal property of the colimit.

When $B$ is an $\sL$-space, there is a subcategory of $\aT / B$ where
the objects are $\sL$-maps $X \rightarrow B$ and the morphisms are
$\sL$-maps over $B$.  In slight abuse of terminology, we will
sometimes refer to this category as $\sL$-spaces over $B$.  We can
regard this category as algebras over a monad on $\aT / B$.  Given a
map $f \colon Y \rightarrow B$, where $B$ is an $\sL$-space, the space
$\bK Y$ admits an $\sL$-map to $B$ given by the unique extension of
$f$ \cite[7.7]{lms}.  This specifies a monad on $\aT / B$, with
structure maps inherited from those of $\bK$, which we will refer to
as $\bK_{B}$.  Denote by $(\aT / B)[\bK_{B}]$ the category of
$\bK_{B}$-algebras.  There is a model structure on this category
defined in analogy with the naive model structure on $\aT / B$.  We
need to verify the existence of tensors and colimits in $(\aT /
B)[\bK_{B}]$.  In order to show that $(\aT / B)[\bK_{B}]$ is
topologically cocomplete, it will suffice to show that the monad
$\bK_{B}$ preserves reflexive coequalizers.  This follows immediately
from the fact that $\bK$ preserves reflexive coequalizers, since
colimits in $\aT / B$ are constructed by taking the colimit in $\aT$
and using the natural map to $B$.

\begin{prop}
The category $(\aT / B)[\bK_{B}]$ is topologically cocomplete (and in
particular has all colimits and tensors with based spaces). 
\end{prop}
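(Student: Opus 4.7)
The plan is to assemble the proposition directly from the abstract lifting machinery reviewed in Section~\ref{sec:coltech}. The monad-theoretic setup is identical: we have a topologically enriched category $\aT/B$, a continuous monad $\bK_{B}$ on $\aT/B$, and we want to show that the category $(\aT/B)[\bK_B]$ of algebras is both cocomplete and tensored over based (equivalently, unbased) spaces.

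First I would verify that the base category $\aT/B$ is itself topologically cocomplete. Colimits in $\aT/B$ are computed by taking the underlying colimit in $\aT$ and equipping it with the canonical map to $B$ supplied by the universal property, as recalled in the preceding discussion. Tensors with an unbased space $A$ are given by the formula $(X \to B) \otimes A = (X \sma A_+ \to B)$ obtained by precomposing the structure map with the projection; these tensors are continuous in each variable. Together with the existence of limits and cotensors (also inherited from $\aT$), this places $\aT/B$ in the scope of Theorem~\ref{t:kelly}.

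Next I would check the hypothesis that $\bK_B$ preserves reflexive coequalizers. This is exactly the observation the author flags just before the statement: since colimits in $\aT/B$ are created by the forgetful functor to $\aT$, and since $\bK$ is known to preserve reflexive coequalizers in $\aT$, the monad $\bK_B$ inherits the same property (the structure map to $B$ on the coequalizer is determined automatically). With this in hand, the EKMM lifting propositions \cite[2.7.4, 7.2.10]{ekmm} quoted in Section~\ref{sec:coltech} construct colimits and tensors in $(\aT/B)[\bK_B]$ as certain reflexive coequalizers in $\aT/B$, and the coequalizer lifts uniquely to a $\bK_B$-algebra by \cite[2.6.6]{ekmm}.

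Finally, a single application of Kelly's theorem (Theorem~\ref{t:kelly}) upgrades cocompleteness plus the existence of tensors to the existence of all indexed colimits in $(\aT/B)[\bK_B]$. There is no real obstacle here; the only point requiring any care is the verification that $\bK_B$ preserves reflexive coequalizers, and this has been reduced to the corresponding statement for $\bK$ by the description of colimits in $\aT/B$.
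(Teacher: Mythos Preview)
Your proposal is correct and follows essentially the same approach as the paper. The paper's argument is terse: it notes just before the proposition that it suffices to show $\bK_B$ preserves reflexive coequalizers, then deduces this from the corresponding fact for $\bK$ together with the creation of colimits in $\aT/B$ by the forgetful functor to $\aT$; you have simply unpacked the implicit appeals to \cite[2.7.4, 7.2.10]{ekmm} and Theorem~\ref{t:kelly} that justify that sufficiency.
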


It will be useful later on to write out an explicit description of the
tensor in $(\aT / B)[\bK_{B}]$.  We regard the category of
$\sL$-spaces as tensored over unbased spaces via the tensor over based
spaces: for an unbased space $A$ the tensor with an $\sL$-space $X$ is
the based tensor $X \otimes A_+$.

\begin{lem}
The tensor of an unbased 
space $A$ and $(X \rightarrow B)$ 
is given by 
\[X \otimes A_+ \rightarrow X \otimes S_0 \cong X \rightarrow BG,\]
where the first map is the collapse map which takes $A$ to the
non-basepoint of $S^0$. 
\end{lem}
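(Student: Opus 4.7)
The plan is to verify the universal property of the tensor directly, reducing the $BG$-compatibility to the already-established tensor-hom adjunction for $\sL$-spaces tensored over based spaces. Fix a target $\sL$-space $(g \colon Y \to BG)$ over $BG$. I will produce a natural homeomorphism
\[
\Hom_{(\aT/BG)[\bK_{BG}]}(X \otimes A_+, Y) \cong \aU(A, \Hom_{(\aT/BG)[\bK_{BG}]}(X, Y)),
\]
where the left side carries the structure map $X \otimes A_+ \to X \to BG$ proposed by the lemma.

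First I would invoke the tensor-hom adjunction for $\sL$-spaces tensored over based spaces, which gives a natural homeomorphism
\[
\Hom_{\aT[\bK]}(X \otimes A_+, Y) \cong \aU(A, \Hom_{\aT[\bK]}(X, Y)),
\]
under which an $\sL$-map $h \colon X \otimes A_+ \to Y$ corresponds to the unbased map $\tilde h \colon A \to \Hom_{\aT[\bK]}(X, Y)$ defined by $\tilde h(a)(x) = h(x \wedge a_+)$. Next I would translate the compatibility over $BG$ across this adjunction. By construction the structure map $X \otimes A_+ \to BG$ is the composite $f \circ \pi$, where $\pi \colon X \otimes A_+ \to X$ is induced by the collapse $A_+ \to S^0$ and $f \colon X \to BG$ is the structure map of $X$. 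Hence $g \circ h = f \circ \pi$ if and only if $g \circ \tilde h(a) = f$ for every $a \in A$, which is precisely the condition that $\tilde h$ factors through $\Hom_{(\aT/BG)[\bK_{BG}]}(X, Y)$.

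A small preliminary verification ensures that the proposed structure map $f \circ \pi$ is itself an $\sL$-map, which follows from the uniqueness of $\sL$-extensions to $BG$ noted in \cite[7.7]{lms}, since both $\pi$ and $f$ are $\sL$-maps. The main obstacle is essentially bookkeeping: confirming that the set-theoretic bijection above is a homeomorphism on mapping spaces, which reduces to the standard fact that the hom in the over-category inherits the subspace topology from the absolute hom. Alternatively, one can derive the same formula from the coequalizer description of tensors in a monadic algebra category applied to $\bK_{BG}$; since the forgetful functor $(\aT/BG) \to \aT$ creates colimits and preserves the tensor with $A_+$, the coequalizer defining the tensor in $(\aT/BG)[\bK_{BG}]$ reduces on underlying $\sL$-spaces to the coequalizer defining $X \otimes A_+$ in $\aT[\bK]$, with structure map to $BG$ induced by $f \circ \pi$ via universality.
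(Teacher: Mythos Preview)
The paper states this lemma without proof; it is presented as an immediate consequence of the general construction of tensors in monadic algebra categories (the coequalizer of \cite[7.2.10]{ekmm}) together with the explicit description of tensors in $\aT/B$ given in the preceding paragraphs.  Your closing remark about deriving the formula from the coequalizer description is therefore exactly the implicit argument the paper has in mind, while your primary approach---direct verification of the universal property by restricting the tensor--hom adjunction for $\sL$-spaces to the over-category---is a correct and arguably cleaner alternative.

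Two minor points of polish.  First, your explicit formula $\tilde h(a)(x) = h(x \wedge a_+)$ tacitly treats an element of the $\sL$-space tensor $X \otimes A_+$ as an element of the underlying based-space smash $X \sma A_+$; strictly the former is a coequalizer of free $\bK$-algebras on the latter, so the formula should be read through the canonical map $X \sma A_+ \to X \otimes A_+$, or simply replaced by an appeal to naturality of the adjunction in the target variable (which is all your argument actually uses: $g \circ h$ corresponds to $a \mapsto g \circ \tilde h(a)$, and $f \circ \pi$ corresponds to the constant map at $f$).  Second, the fact that $f \circ \pi$ is an $\sL$-map requires nothing from \cite[7.7]{lms}: it is just a composite of $\sL$-maps, since $\pi$ is obtained by applying the functor $X \otimes (-)$ to the based map $A_+ \to S^0$.
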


\section{The operadic Thom spectrum functor}

In this section we review the operadic theory of Thom spectra
developed by Lewis \cite[7.3]{lms} and May \cite{may-quinn-ray}.
Our discussion is updated slightly to take account of more recent
developments in the theory of diagram spectra \cite{mandell-may,
  mmss}.  In particular, our terminology regarding $\sI$-spaces
reflects the modern usage and is at variance with the definitions in
the original articles.

\subsection{The definition of $M$}

Recall that $\sI$ denote the category of finite-dimensional or
countably-infinite real inner product spaces and linear isometries.   

\begin{defn}
An $\sI$-space is a continuous functor $X$ from $\sI$ to the category
 of based topological spaces.
\end{defn}

We will restrict attention to $\sI$-spaces with the property that
$X(V)$ is the colimit of $X(W)$ for the finite-dimensional subspaces
$W \subset V$.  This constraint implies that it is sufficient to
consider the restriction of $X$ to the full subcategory of $\sI$
consisting of the finite-dimensional real inner product spaces
\cite[1.1.8-1.1.9]{may-quinn-ray}.   

The idea of using $\sI$ to capture structure about infinite loop
spaces and operad actions dates back to Boardman and Vogt's original
treatment \cite{boardman-vogt}.  In the context of Thom spectra,
$\sI$-spaces first arose in \cite{may-quinn-ray}.  More recently, May
has introduced the terminology of``functors with cartesian product''
(FCP) to highlight the connection to diagram spectra \cite{may-fcp},
in analogy with Bokstedt's ``functors with smash product'' (FSP's).

\begin{defn}
A functor with cartesian product over $\sI$ ($\sI$-FCP) is a
$\sI$-space equipped with a unital and associative ``Whitney sum''
natural transformation $\omega$ from $X \times X$ to $X \circ \oplus$. 
\end{defn}

A commutative $\sI$-FCP is a $\sI$-FCP for which the natural
transformation $X \times X$ to $X \circ \oplus$ is commutative.  We
will assume in the following that by default $\sI$-FCP's are
commutative.  Commutative $\sI$-FCP's encode an $E_\infty$-structure
\cite[1.1.6]{may-quinn-ray}; specifically, a commutative $\sI$-FCP $X$
yields an $\sL$-space structure on $X(\R^{\infty})$.  The essential
observation is that we can use the Whitney sum to obtain a natural map
$\sL(j) \times X(R^{\infty})^j \rightarrow X(R^{\infty})$ specified by
\[
(f, x_1, x_2, \ldots, x_j) \mapsto Xf(x_1 \oplus x_2 \oplus \ldots \oplus x_j).
\]
Similarly, a noncommutative $\sI$-FCP yields a
non-$\Sigma$ $\sL$-space structure on $X(\R^{\infty})$.

There is an obvious product structure on the category of $\sI$-spaces
specified by the levelwise cartesian product.  A monoid $\sI$-FCP is
an $\sI$-FCP such that the levelwise monoid product specifies a
morphism of $\sI$-spaces.  A notable example is the monoid $\sI$-FCP
$F$ given by taking $F(V)$ to be the space of based homotopy
equivalences of $S^V$.  We will always assume that for a monoid
$\sI$-FCP $X$, the monoids $X(V)$ are grouplike.  Analogously, we will
consider group $\sI$-FCP's.  Familiar examples include the functor
specified by $V \mapsto O(V)$ and the functor specified by $V \mapsto
U(V)$.

For any monoid $\sI$-FCP $X$, we can construct a related $\sI$-FCP $BX$
via the two-sided bar construction.  Specifically, define $BX$ as the
functor specified by
\[BX(V) = B(*,X(V),*),\]
where $B(-,-,-)$ denotes the geometric realization of the two-sided bar
construction.  When $X$ is equipped with an augmentation to $F$ which
is a map of monoid $\sI$-FCP's, we can construct $EX$ as \[EX(V) = B(*,X(V),S^V),\]
where $X(V)$ acts on $S^V$ via the augmentation.  There is a
projection map $\pi \colon EX \rightarrow BX$ and a section defined by
the basepoint inclusion $* \monoto S^V$.  This section is a
cofibration, $\pi$ is a quasifibration, and $\pi$ has fiber $S^V$
\cite[7.2]{lms}.  If $X$ actually takes values in groups, $\pi$ is a
bundle.

When $X = F$, this construction provides a model of the universal
quasifibration with spherical fibers \cite{may-class}.  More
generally, we obtain universal quasifibrations and fibrations with
spherical fibers and prescribed structure groups.  Note that we are
following Lewis in letting $EG(V)$ denote the total space of the
universal spherical quasifibration rather than the associated
principal quasifibration.

Moving on, we now describe the Thom spectrum construction.  Let $G$ be a
monoid $\sI$-FCP which is augmented over $F$.  Abusing notation, we
will write $BG$ to denote both the $\sI$-FCP $BG$ and the space
$\colim_V BG(V)$.  We will assume we are given a map of spaces $f
\colon Y \rightarrow BG$.  

\begin{defn}
Let $f\colon Y \rightarrow BG$ be a map of spaces.  The filtration of
$BG$ by inner product spaces $V$ induces a filtration on $Y$ defined
as $Y(V) = f^{-1}(BG(V))$.  The Thom prespectrum associated to $f
\colon Y \rightarrow BG$ is specified as follows.  Set $Tf(V)$ to be
the Thom space of the pullback $Z(V)$ in the diagram :
\[
\xymatrix{
Z(V) \ar[r]\ar[d] & EG(V) \ar[d] \\
Y(V) \ar[r] & BG(V). \\
}
\]
That is, the map $Z(V) \rightarrow Y(V)$ has a section $i$, and $Tf(V)
= Z(V)/i(Y(V))$.  $Tf$ is a prespectrum, and we define the Thom
spectrum in $\aS$ associated to $f$ as the spectrification $Mf = LTf$.  
\end{defn}

Other filtrations can also be used in this construction, but it can be
shown that the choice of filtration does not matter up to isomorphism
of spectra \cite[7.4.4]{lms}.

To see that $Tf$ is actually a prespectrum, we must describe the
suspension maps.  Associated to the inclusion $V \subset W$ is an
inclusion $Y(V) \subset Y(W)$, and this induces a map of pullbacks
$Q_V \rightarrow Z_W$ in the following diagrams :
\[
\xymatrix{
Z_W \ar[r] \ar[d] & EG(W) \ar[d] && Q_V \ar[r] \ar[d] & EG(V) \ar[r] \ar[d] & \ar[d] EG(W) \\
Y(W) \ar[r] & BG(W) && Y(V) \ar[r] & BG(V) \ar[r] & BG(W). \\
}
\]
Upon passage to Thom spaces, we can identify the Thom space of $Q_V$
as the fiberwise suspension $\Sigma^{W-V}$ of the Thom space of $Z_V$
\cite[7.2.2]{lms}, and so the map in question is a suspension map.
One checks that these suspension maps are appropriately coherent
\cite[7.2.1]{lms}.

\begin{rem}
Lewis treated only monoid $\sI$-FCP's $X$ augmented over $F$; this
augmentation gives an action of $X$ on $S^V$ which allows the
construction of $EX$.  However, we can develop the theory of Thom
spectra for other choices of fiber, as long as we specify a levelwise
action of $X$ on the fiber.  Such constructions will be useful for us
when considering models of Eilenberg-Mac Lane spectra as Thom spectra
in section \ref{S:TEM}.  We will consider $p$-local and $p$-complete
spherical fibrations, and employ ``localized'' and ``completed''
versions of $F$ formed from spaces of based self-equivalences of the
$p$-local sphere $S^V_{(p)}$ and based self-equivalences of the
$p$-complete sphere $(S^V)\phat$.
\end{rem}

We have constructed the Thom spectrum as a continuous functor from
$\aU / BG$ to coordinate-free spectra $\aS$.  Working with $\aT / BG$,
we obtain a functor to $\aS \backslash S$, unital spectra.  Here the
unit $S \rightarrow Mf$ is induced by the inclusion $* \rightarrow X$
over $BG$.  In abuse of notation, we will refer to both of these
functors as $M$.

\subsection{Properties of $M$}

Lewis proves that the Thom spectrum functor $M$ preserves colimits in
$\aU / BG$ \cite[7.4.3]{lms}.  It is straightforward to extend this to
the functor $M$ from $\aT / BG$ to $\aS \backslash S$.

\begin{lem}
The Thom spectrum functor takes colimits in $\aT / BG$ to colimits in
the category $\aS \backslash S$. 
\end{lem}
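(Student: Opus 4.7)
The strategy is to reduce to Lewis's result \cite[7.4.3]{lms} that $M\colon \aU/BG \to \aS$ preserves colimits. Observe that $\aT/BG$ can be identified with the coslice $((*\to BG) \downarrow \aU/BG)$, where $(*\to BG)$ denotes the basepoint inclusion viewed as an object of $\aU/BG$; analogously, $\aS \backslash S$ is the coslice $(S \downarrow \aS)$. For any coslice $(c \downarrow \aC)$, the colimit of a diagram $D\colon J \to (c \downarrow \aC)$ is computed as the colimit in $\aC$ of the augmented diagram $D^{+}\colon J^{\triangleleft} \to \aC$ obtained by freely adjoining $c$ as an initial object with structure maps $c \to D(j)$; the cocone map from $c$ provides the distinguished ``basepoint'' on the colimit. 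This description handles connected and non-connected colimits uniformly, which is where the augmentation at $c$ becomes essential (for example, coproducts in the coslice are pushouts over $c$, not ordinary coproducts in $\aC$).

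The next step is to check that $M$ carries the distinguished object $(*\to BG)$ of $\aT/BG$ to $S$ in $\aS \backslash S$. For this map the filtration gives $Y(V) = *$, and the pullback $Z(V)\to Y(V)$ is the fiber of $EG(V)\to BG(V)$ over the basepoint, which is $S^V$ with section the basepoint of $S^V$. Thus $Tf(V) = S^V$ and $Mf \cong S$, with unit the identity. Moreover, for every $D(j)$, by functoriality the value of $M$ on the structure morphism $(*\to BG) \to D(j)$ in $\aT/BG$ is the unit map $S\to MD(j)$.

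With these ingredients in hand I would assemble the argument as follows. Given a diagram $D\colon J \to \aT/BG$, form the augmented diagram $D^{+}$ in $\aU/BG$ as above. Then
\[
M\bigl(\colim^{\aT/BG} D\bigr) \;\cong\; M\bigl(\colim^{\aU/BG} D^{+}\bigr) \;\cong\; \colim^{\aS}(M \circ D^{+}),
\]
where the first isomorphism is the coslice description of colimits and the second is Lewis's theorem. Since $M \circ D^{+}$ is again an augmented diagram with value $S$ at the new initial object and structure maps equal to the units of the $MD(j)$, the right-hand colimit is precisely the colimit of $M \circ D$ in $\aS \backslash S$. The main (minor) obstacle is verifying the coslice description of colimits carefully for non-connected diagrams; but this is standard category theory and reduces the lemma cleanly to Lewis's unbased result once the computation $M(*\to BG) \cong S$ is established.
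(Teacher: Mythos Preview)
Your proof is correct and follows essentially the same approach as the paper: both reduce to Lewis's result that $M\colon \aU/BG \to \aS$ preserves colimits together with the identification $M(*\to BG)\cong S$. The only cosmetic difference is that the paper describes the coslice colimit as the pushout of $\colim_{\aD} * \to *$ along $\colim_{\aD} * \to \colim_{\aD} R_d$ in $\aU/BG$, whereas you use the equivalent description via the augmented diagram $D^{+}\colon J^{\triangleleft}\to \aU/BG$.
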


\begin{proof}
A colimit over $\aD$ in $\aT / BG$ is given as the pushout in $\aU / BG$ 
\[
\xymatrix{
\relax{\colim_{\aD}} * \ar[r] \ar[d] & \ar[d] \relax{*} \\
\relax{\colim_{\aD}} R_d \ar[r] & Z \\
}
\]
\\
where the indicated colimits are also taken in the category $\aU /
BG$.  Similarly, a colimit over $\aD$ in $\aS \backslash S$ is
constructed as the pushout in $\aS$ 
\[
\xymatrix{
\colim_{\aD} S \ar[r]\ar[d] & \ar[d] S \\
\colim_{\aD} R_d \ar[r] & Z \\
}
\]
\\
where the indicated colimits are also taken in $\aS$.  The result
follows from the fact that $M$ takes colimits in $\aU / BG$ to
colimits in spectra and $M(*) \cong S$. 
\end{proof}

Lewis also shows that the functor $M$ also preserves tensors with
unbased spaces in $\aT / BG$ \cite[7.4.6]{lms}. 

\begin{prop}\label{P:ten}
The Thom spectrum associated to the composition $X \sma A_+
\rightarrow X \rightarrow BG$ is naturally isomorphic to $Mf \sma
A_+$.
\end{prop}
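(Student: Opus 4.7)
The plan is to deduce the proposition from the preceding colimit-preservation lemma together with Lewis's unbased tensor result \cite[7.4.6]{lms}, by realizing $X \sma A_+$ as a pushout whose three corners are unbased tensors. Specifically, the standard identification $X \sma A_+ = (X \times A)/(\{x_0\} \times A)$ presents $X \sma A_+$ as a pushout square in $\aT/BG$:
\[
X \sma A_+ \;\cong\; *\,\cup_{\{x_0\} \times A}\,(X \times A),
\]
where $X \times A \to BG$ is $f \circ \pi_1$, the objects $\{x_0\} \times A$ and $*$ both map constantly to $*_{BG}$, the left leg is the collapse to a point, and the right leg is the basepoint-crossed inclusion. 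Choosing any $a_0 \in A$ as a basepoint of $A$ makes $X \times A$ a based space whose basepoint lies over $*_{BG}$, so all four arrows are legitimately in the slice category.

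Next, I apply the Thom spectrum functor $M$; by the previous lemma $M$ preserves colimits from $\aT/BG$ to $\aS\backslash S$, so the above pushout goes to
\[
M(X \sma A_+) \;\cong\; M(*)\,\cup_{M(\{x_0\} \times A)}\,M(X \times A).
\]
Each corner is now a Thom spectrum of an unbased tensor in $\aU/BG$, and Lewis's result \cite[7.4.6]{lms} identifies them: $M(X \times A) \cong Mf \sma A_+$, $M(\{x_0\} \times A) \cong S \sma A_+$ (since $M$ of the basepoint inclusion $\{x_0\} \hookrightarrow BG$ is $S$), and $M(*) \cong S$. Functoriality of $M$ applied to the two legs should identify the spectrum-level transition maps as the unit $\eta \sma \id_{A_+}\colon S \sma A_+ \to Mf \sma A_+$ on one side and the collapse $S \sma A_+ \to S$ induced by $A_+ \to S^0$ on the other.

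Assembling gives $M(X \sma A_+) \cong (Mf \sma A_+) \cup_{S \sma A_+} S$, which by construction is the tensor of the unital spectrum $Mf$ with the unbased space $A$ in $\aS\backslash S$; this is what the paper denotes by $Mf \sma A_+$ in the unital context. The main step requiring real verification is the explicit identification of the spectrum-level transition maps with $\eta \sma \id$ and the collapse $A_+ \to S^0$, and this is a routine consequence of the naturality of $M$ applied to $\{x_0\} \hookrightarrow X$ and to $A \to *$.
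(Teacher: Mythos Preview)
The paper does not actually prove this proposition: it is quoted directly from Lewis \cite[7.4.6]{lms} with no argument given. Your proposal is therefore not reproducing the paper's proof but rather supplying one, and the route you take---reduce the based tensor to a pushout of unbased tensors and invoke colimit preservation---is exactly parallel to the paper's own proof of the immediately preceding lemma (based colimits from unbased colimits). In that sense your approach is fully in the spirit of the surrounding text.

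Your argument is correct. One point worth making explicit, which you handle but only glancingly: the symbol ``$Mf \sma A_+$'' in the proposition must be read as the tensor of the \emph{unital} spectrum $Mf$ with the unbased space $A$ in $\aS\backslash S$, i.e.\ the pushout $S \cup_{S\sma A_+}(Mf\sma A_+)$ you obtain, and \emph{not} as the bare smash product in $\aS$. These genuinely differ---take $X=\{x_0\}$, so $X\sma A_+ = *$ and its Thom spectrum is $S$, whereas the spectrum-level smash $S\sma A_+ = \Sigma^\infty A_+$ is not $S$ when $A$ has more than one point. Your final paragraph identifies the pushout correctly as the $\aS\backslash S$-tensor, so the argument closes, but you might state this identification more assertively rather than writing ``should.'' The verification that the transition maps are $\eta\sma\id$ and the collapse is indeed routine naturality of $M$ together with Lewis's isomorphism applied to the inclusion $\{x_0\}\hookrightarrow X$ and the projection $A\to *$.

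A trivial caveat: your choice of $a_0\in A$ presumes $A$ nonempty; the empty case is immediate since both sides reduce to $M(*) = S$.
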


When $A = I$, this implies that functor $M$ converts fiberwise
homotopy equivalences into homotopy equivalences in $\aS \backslash
S$.  

The question of invariance under weak equivalence is somewhat more
delicate.  Unfortunately, quasifibrations are not preserved under
pullback along arbitrary maps.  This can cause technical difficulty
when working with $BF$, or any other monoid $\sI$-FCP (which is not a
group $\sI$-FCP).  Following Lewis \cite[7.3.4]{lms}, we make the
following definition.

\begin{defn}
Define a map $f \colon X \rightarrow BG$ to be good if the projections $Z_V
\rightarrow X(V)$ are quasifibrations and the sections $X(V)
\rightarrow Z_V$ are Hurewicz cofibrations.
\end{defn}

A map $f \colon X \rightarrow BG$ associated to an $\sI$-monoid $G$ with
values in groups is always good, and all Hurewicz fibrations are good
\cite[7.3.4]{lms}.  Therefore, it is sometimes useful to replace
arbitrary maps by Hurewicz fibrations when working over $BF$ via the functor
$\Gamma$ \cite[7.1.11]{lms}.  This is compatible with the linear
isometries operad --- recall that given an $\sL$-map $f \colon
X \rightarrow BF$, the map $\Gamma f \colon \Gamma X \rightarrow BF$
is also an $\sL$-map \cite[1.8]{may-geom}.  Our discussion of $\Gamma$
is brief, as we do not use it extensively in this paper.

When the maps in question are good, the Thom spectrum functor
preserves weak equivalences over $BG$ \cite[7.4.9]{lms}.

\begin{thm}
If $f \colon X \rightarrow BG$ and $g \colon X^{\prime} \rightarrow BG$ are good
maps such that there is a weak equivalence $h \colon X \htp X^{\prime}$
over $BG$, then there is a stable equivalence $Mh \colon Mf \htp Mg$.
\end{thm}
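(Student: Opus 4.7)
The plan is to prove the theorem at the level of Thom prespectra: show that the induced map $Th \colon Tf \to Tg$ is a level-wise weak equivalence of based spaces, then apply spectrification $L$ to obtain the stable equivalence $Mh \colon Mf \to Mg$. Since spectrification takes level-wise weak equivalences of suitably well-behaved prespectra to stable equivalences, the whole problem reduces to producing, for each finite-dimensional inner product space $V$, a weak equivalence $Tf(V) \htp Tg(V)$.

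Fix such a $V$. The filtrations $X(V) = f^{-1}(BG(V))$ and $X'(V) = g^{-1}(BG(V))$ are compatible with $h$ because $g \circ h = f$, so $h$ restricts to a map $h_V \colon X(V) \to X'(V)$; using that every compact subspace of $BG$ lands in some $BG(V)$ and that both $X$ and $X'$ are exhausted by these preimages, one sees that $h_V$ is a weak equivalence whenever $h$ is. Consider the two pullback squares defining $Z_V$ and $Z'_V$ over the common right-hand vertical map $EG(V) \to BG(V)$. The goodness hypothesis says that the left-hand vertical maps $Z_V \to X(V)$ and $Z'_V \to X'(V)$ are quasifibrations, and the sections of these quasifibrations are Hurewicz cofibrations. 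Since both quasifibrations are pulled back from the single map $EG(V) \to BG(V)$, their fibers over corresponding basepoints are canonically identified with $S^V$.

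The map $h_V$ induces a map of quasifibrations $Z_V \to Z'_V$ over $h_V$ which is the identity on fibers. Applying the five lemma to the long exact sequences of homotopy groups of the quasifibrations shows that $Z_V \to Z'_V$ is a weak equivalence of total spaces. Because the sections $X(V) \to Z_V$ and $X'(V) \to Z'_V$ are Hurewicz cofibrations, the induced map of quotients (i.e.\ of Thom spaces) $Tf(V) = Z_V / X(V) \to Z'_V/X'(V) = Tg(V)$ is also a weak equivalence, by the standard fact that weak equivalences of cofibration pairs induce weak equivalences on quotients.

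Assembling these level-wise weak equivalences gives a level-wise weak equivalence of prespectra $Th \colon Tf \to Tg$, and the spectrification $Mh = LTh$ is then a stable equivalence in $\aS \backslash S$, as required. The main obstacle in this argument is the step that uses the goodness hypothesis twice: first to guarantee that the map of fiber sequences really is a genuine map of quasifibrations (so that one has long exact sequences to feed into the five lemma), and second to guarantee that the collapse to Thom spaces preserves weak equivalences. Without the quasifibration hypothesis this comparison can fail, which is precisely the motivation for the notion of a \emph{good} map; and without the Hurewicz cofibration hypothesis on the sections, the passage to quotients in the final step would not be homotopically well-behaved.
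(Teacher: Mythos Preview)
The paper does not actually prove this theorem: it is quoted from Lewis \cite[7.4.9]{lms} and stated without argument. So your attempt is being compared against Lewis's original proof rather than anything in this paper.

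Your overall strategy---produce a level-wise comparison of Thom prespectra and then spectrify---is the right shape, and the last two steps (five lemma on the quasifibration long exact sequences, then pass to quotients using the cofibration hypothesis on the sections) are exactly how the goodness hypothesis gets used. But there is a genuine gap at the very first step: the assertion that $h_V \colon X(V) \to X'(V)$ is a weak equivalence does not follow from the reasoning you give. The compactness/exhaustion facts you cite show only that $\pi_n(X) \cong \colim_V \pi_n(X(V))$ and likewise for $X'$, so that the \emph{colimit} of the maps $\pi_n(h_V)$ is the isomorphism $\pi_n(h)$. That does not force any individual $\pi_n(h_V)$ to be an isomorphism; a filtered colimit of non-isomorphisms can perfectly well be an isomorphism. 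In particular, a sphere in $X'(V)$ lifts through $h$ only to some $X(W)$ with $W$ possibly strictly larger than $V$, and the homotopy witnessing the lift in $X'$ need not stay inside $X'(V)$.

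The repair is not to aim for level-wise weak equivalences at all. For good $f$ the prespectrum $Tf$ is $\Sigma$-cofibrant, so $Mh$ is a stable equivalence iff $Th$ induces an isomorphism on $\colim_V \pi_{n+\dim V}\bigl(Tf(V)\bigr)$. At that colimit level your compactness argument is legitimate: any class in $\pi_{n+\dim V}(Tg(V))$ can be chased, after enlarging $V$ to some $W$, through the comparison of quasifibration/cofibration sequences, because enlarging $V$ is exactly what the colimit allows. Lewis's argument in LMS is organized around this stable (colimit) comparison rather than a level-by-level one; your five-lemma and quotient steps survive intact once they are carried out after passage to the colimit.
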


In this situation, $M$ also takes homotopic maps to stably equivalent
spectra \cite[7.4.10]{lms}.  Note however that the stable equivalence
depends on the homotopy.

\begin{thm}
If $f \colon X \rightarrow BG$ and $g \colon X \rightarrow BG$ are good maps
which are homotopic, then there is a stable equivalence $Mf \htp Mg$.
\end{thm}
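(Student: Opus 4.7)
Plan. The strategy is to view a chosen homotopy $H \colon X \times I \to BG$ between $f$ and $g$ as an object of $\aT / BG$, and to compare $Mf$ and $Mg$ with $MH$ via the endpoint inclusions $i_0, i_1 \colon X \to X \times I$. The identities $H \circ i_0 = f$ and $H \circ i_1 = g$ promote $i_0$ and $i_1$ to morphisms $(X,f) \to (X \times I, H)$ and $(X,g) \to (X \times I, H)$ in $\aT / BG$, and each is a homotopy equivalence of underlying spaces (being a section of the evident deformation retraction $X \times I \to X$).

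To apply the preceding theorem I also need the target $H$ itself to be a good map, and this is the main obstacle: for $G = F$ the homotopy $H$ need not be good, even when $f$ and $g$ are. The remedy is to appeal to the Hurewicz fibration replacement functor $\Gamma$ of \cite[7.1.11]{lms}, yielding a factorization $H = \tilde H \circ j$ in which $j \colon X \times I \to Y$ is a weak equivalence over $BG$ and $\tilde H \colon Y \to BG$ is a Hurewicz fibration. Since every Hurewicz fibration is good, $\tilde H$ qualifies as a good map.

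Now the composites $j \circ i_0 \colon (X,f) \to (Y, \tilde H)$ and $j \circ i_1 \colon (X,g) \to (Y, \tilde H)$ are weak equivalences in $\aT / BG$ between good maps: they lie over $BG$ because $\tilde H \circ j \circ i_0 = H \circ i_0 = f$ and analogously for $i_1$, and both are weak equivalences of underlying spaces as composites of a homotopy equivalence with the weak equivalence $j$. The preceding theorem then delivers stable equivalences $M(j \circ i_0) \colon Mf \htp M\tilde H$ and $M(j \circ i_1) \colon Mg \htp M\tilde H$, and concatenating produces the desired stable equivalence $Mf \htp Mg$. Note that the resulting equivalence genuinely depends on the chosen homotopy $H$, as foreshadowed by the remark preceding the theorem.
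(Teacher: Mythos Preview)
The paper does not itself prove this statement; it merely records the result and cites Lewis \cite[7.4.10]{lms}. Your argument is correct and is the natural one: view the chosen homotopy $H$ as an object of the over-category, use the endpoint inclusions $i_0,i_1$ as weak equivalences over $BG$ with good sources $f,g$, and repair the possible failure of goodness of $H$ by passing to the Hurewicz fibration replacement $\Gamma H$ before invoking the preceding theorem on each composite $j\circ i_\epsilon$. Your closing remark that the resulting equivalence depends on $H$ is precisely the caveat the paper flags before stating the theorem.
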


\section{The Thom spectrum functor is a left adjoint}\label{sec:thomleft}

As discussed previously, spaces with actions by the linear isometries
operad $\sL$ can be regarded as the category $\aT[\bK]$ of algebras
over the monad $\bK$.  Similarly, spectra in $\aS \backslash S$ which
are $E_\infty$-ring spectra structured by the linear isometries operad
can be regarded as the category $(\aS \backslash S)[\bC]$ of algebras
with respect to the monad $\tilde{\bC}$.

One of the main results of Lewis' work is that the Thom spectrum
functor $M$ ``commutes'' with these monads.  Specifically, Lewis
proves \cite[7.7.1]{lms}  

\begin{thm}
\hspace{5 pt}
\begin{enumerate}
\item{Given a map $f \colon X \rightarrow BF$, there is an isomorphism
$\tilde{\bC} Mf \cong M(\bK_{BG} f)$, where the map 
\[\bK_{BG}f \colon \bK_{BG} X \rightarrow BG\]
is the natural map induced from $X \rightarrow BG$.} 
\item{This isomorphism is coherently compatible with the unit and
  multiplication maps for these monads, in the sense of lemma
  \ref{l:monad-comm}.}
\end{enumerate}
\end{thm}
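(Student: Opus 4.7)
The overall plan is to reduce the statement to the fact, fundamental for Thom spectra, that $M$ converts the $\sI$-FCP Whitney sum into smash products. Writing the monad $\bK$ in terms of the linear isometries operad, we have a filtration by symmetric powers whose associated graded pieces are of the form $\sL(j)_+ \sma_{\Sigma_j} X^{\sma j}$, and analogously for $\tilde{\bC}$ on $\aS\backslash S$. The monad $\bK_{BG}$ is obtained from $\bK$ by using the $\sL$-action on $BG$, so that on the $j$-th piece the structure map to $BG$ sends $(g; x_1,\dots,x_j)$ to $Bg \circ(f\oplus\cdots\oplus f)(x_1,\dots,x_j)$, where the composite Whitney sum and operadic action takes values in $BG$. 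Thus I would first give an explicit presentation of $\bK_{BG} f$ as a colimit over a suitable indexing category of the diagrams $\sL(j) \times X^j \to BG$, and of $\tilde{\bC}(Mf)$ as the analogous colimit of the diagrams $\sL(j)_+ \sma (Mf)^{\sma j}$, both constructed out of identifications coming from the operad structure and from the reductions built into $\bK$ and $\tilde{\bC}$.

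The essential geometric content is then the isomorphism
\[M\bigl( X^j \xrightarrow{f^{\times j}} BG^j \xrightarrow{\oplus} BG\bigr) \;\cong\; (Mf)^{\sma j},\]
which expresses the Thom isomorphism for a sum of principal $G$-quasifibrations. I would prove this level by level using the pullback-and-quotient definition of the Thom prespectrum together with the fact that the pullback along $X^j \to BG$ is, tautologically, the fiberwise product of the $j$ pulled-back quasifibrations, whose fiberwise one-point compactification is the smash product of the individual compactifications. Since $M$ preserves colimits in $\aT/BG$ by the lemma we already quoted and it preserves tensors with unbased spaces such as $\sL(j)$ by Proposition~\ref{P:ten}, we obtain
\[M\bigl(\sL(j)_+ \sma_{\Sigma_j} X^{\sma j} \to BG\bigr) \;\cong\; \sL(j)_+ \sma_{\Sigma_j} (Mf)^{\sma j}.\]

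To assemble these pieces into the desired isomorphism $M(\bK_{BG} f) \cong \tilde{\bC} Mf$, I would again invoke preservation of colimits by $M$: the presentations above exhibit both source and target as colimits of corresponding diagrams with isomorphic entries, and the isomorphisms are compatible with the identifications on the space side (the reductions built into $\bK$) and on the spectrum side (the reductions built into $\tilde{\bC}$, i.e.\ the identifications involving the unit $S\to Mf$). This is where one must be careful: the reduction that turns the free operadic monad on based spaces into $\bK$ uses the basepoint of $X$, while the reduction for $\tilde{\bC}$ uses the unital structure $S \to Mf$, and one must check that these reductions correspond under $M$. Since $M(*\to BG) = S$ and $M$ converts the basepoint inclusion $* \to X$ to the unit $S \to Mf$, the identifications do match up.

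Finally, for part (ii), the compatibility with unit and multiplication reduces to checking that the isomorphism of part (i) intertwines the operadic structure maps $\gamma$ on both sides. Because the isomorphism was built levelwise out of the operad structure itself, the diagrams of Lemma~\ref{l:monad-comm} commute essentially by naturality together with the associativity of Whitney sum on $\sI$-FCPs. I expect the main obstacle not to be any individual step, each of which is a concrete check, but rather the careful bookkeeping required to track the coequalizer identifications that define the reduced monads $\bK$ and $\tilde{\bC}$ so as to verify that the levelwise isomorphism descends compatibly; once these bookkeeping details are in hand, the proof is a diagram chase driven by the preservation of colimits by $M$.
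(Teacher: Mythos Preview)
The paper does not supply its own proof of this statement; it is quoted as a theorem of Lewis, with a citation to \cite[7.7.1]{lms}. Your overall strategy---decompose $\bK_{BG}$ and $\tilde{\bC}$ into their operadic filtration pieces, identify the pieces under $M$, and reassemble using the fact that $M$ preserves colimits---is exactly the shape of Lewis's argument, and your identification of the ``essential geometric content'' (Whitney sum of pulled-back quasifibrations goes to external smash of Thom spectra) is the correct core observation.

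There is, however, a genuine gap in how you handle the $\sL(j)$ factor. You invoke Proposition~\ref{P:ten} to pass from $M\bigl(\sL(j)_+\sma_{\Sigma_j} X^{\sma j}\to BG\bigr)$ to $\sL(j)_+\sma_{\Sigma_j}(Mf)^{\sma j}$. But Proposition~\ref{P:ten} concerns the tensor: it applies to a map of the form $X\times A \xrightarrow{\pi_1} X \xrightarrow{f} BG$ that factors through the projection, and its output is the ordinary smash $Mf\sma A_+$. The map $\sL(j)\times X^j\to BG$ relevant to $\bK_{BG}$ does \emph{not} factor through the projection off $\sL(j)$: the operad coordinate $g\in\sL(j)$ is used essentially, via the $\sL$-action $(g;y_1,\dots,y_j)\mapsto (BG)(g)(y_1\oplus\cdots\oplus y_j)$. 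Correspondingly, the $j$-th piece of $\tilde{\bC}$ on $\aS\backslash S$ is not the tensor $\sL(j)_+\sma_{\Sigma_j}(Mf)^{\sma j}$ but the \emph{twisted half-smash product} $\sL(j)\ltimes_{\Sigma_j}(Mf)^{(j)}$, where $(Mf)^{(j)}$ is the external $j$-fold smash indexed on $U^j$ and $\ltimes$ effects the change of universe along $g\colon U^j\to U$.

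What Lewis actually proves as the key lemma is the identification
\[
M\bigl(A\times X_1\times\cdots\times X_j \to BG\bigr)\;\cong\; A\ltimes\bigl(Mf_1\sma_{\mathrm{ext}}\cdots\sma_{\mathrm{ext}} Mf_j\bigr)
\]
for $A\subset\sL(j)$, with the map to $BG$ given by the operad action. This is a statement about the interaction of $M$ with change of universe, not a consequence of preservation of tensors; its proof requires tracking how the filtration of $BG$ by finite-dimensional $BG(V)$ behaves under the isometries in $A$. Once you replace your appeal to Proposition~\ref{P:ten} with this lemma, the rest of your outline (assembling via colimits, matching the basepoint reduction in $\bK$ with the unit reduction in $\tilde{\bC}$, and checking compatibility with $\mu$ and $\eta$ by naturality) goes through and coincides with Lewis's argument.
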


As we have observed, a consequence of this result is that the Thom
spectrum functor induces a functor $M_{E_\infty}$ from $(\aT /
BG)[\bK_{BG}]$ to $E_\infty$-ring spectra structured by
$\tilde{\bC}$.  Composing with the functor $S \sma_{\sL} -$, we obtain
a Thom spectrum functor $M_{\aC\aA_S}$ from $(\aT / BG)[\bK_{BG}]$ to
commutative $S$-algebras.  Now employing theorem \ref{lifting}, we
obtain the main result.

\begin{thm}
The Thom spectrum functor 
\[M_{\aC\aA_S} \colon (\aT / BG)[\bL_{BG}] \to \aC\aA_S\]
commutes with indexed colimits.
\end{thm}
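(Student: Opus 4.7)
The plan is to assemble the pieces already gathered in the excerpt and invoke Theorem~\ref{lifting} (= Theorem~\ref{t:pres}). All the hypotheses have been verified in the preceding sections; the proof amounts to checking them off and then composing with the continuous left adjoint $S \sma_{\sL} -$.

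First, I would take the underlying functor $M \colon \aT/BG \to \aS\backslash S$ and check the input to Theorem~\ref{lifting} for the monads $\bK_{BG}$ on $\aT/BG$ and $\tilde{\bC}$ on $\aS\backslash S$. The domain and codomain are topologically cocomplete (recorded in Section~\ref{S:einf}), and the functor $M$ is continuous and a left adjoint at the unenriched level by the lemma and Proposition~\ref{P:ten} (it preserves colimits and tensors with unbased spaces via the natural isomorphism $M(f \otimes A) \cong Mf \sma A_+$). The coherence between $M$ and the two monads is exactly Lewis' theorem \cite[7.7.1]{lms} stated at the end of Section~4, which verifies the hypothesis of Lemma~\ref{l:monad-comm}. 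Finally, both $\bK_{BG}$ and $\tilde{\bC}$ preserve reflexive coequalizers (for $\bK_{BG}$ this follows from the fact that colimits in $\aT/BG$ are created in $\aT$, where $\bK$ preserves reflexive coequalizers; for $\tilde{\bC}$ this is recorded in Section~\ref{S:einf}). Applying Theorem~\ref{lifting} then produces a lifted functor
\[
M_{E_\infty} \colon (\aT/BG)[\bK_{BG}] \to (\aS\backslash S)[\tilde{\bC}]
\]
which preserves colimits and tensors with unbased spaces.

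Second, I would post-compose with $S \sma_{\sL} -$. This functor from $(\aS\backslash S)[\tilde{\bC}]$ to $\aC\aA_S$ is a continuous left adjoint by \cite[2.3.6, 2.4.2]{ekmm}, so it preserves colimits and tensors. Composition then yields that $M_{\aC\aA_S}$ preserves colimits and tensors with unbased spaces.

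Third, I would upgrade preservation of colimits plus tensors to preservation of all indexed colimits. Both $(\aT/BG)[\bK_{BG}]$ and $\aC\aA_S$ are cocomplete topological categories, hence by Kelly's result (Theorem~\ref{t:kelly}) they have all indexed colimits. A continuous functor between topological categories that preserves ordinary colimits and tensors automatically preserves all indexed colimits, since every indexed colimit can be built as an ordinary colimit of tensors. This gives the conclusion.

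The only subtlety worth flagging is bookkeeping: one must be careful that the hypotheses for Theorem~\ref{lifting} are checked for the \emph{parametrized} functor $M \colon \aT/BG \to \aS\backslash S$ rather than for an absolute version, and that the lifted functor lands in $\tilde{\bC}$-algebras, which requires the reduced rather than the unreduced monad $\bC$. Both points are handled by working in $\aT/BG$ and $\aS\backslash S$ from the outset. Once this is set up correctly, the theorem is essentially a formal consequence of the machinery, with no new categorical obstacle — the substantive content lies entirely in Lewis' identification $M\bK_{BG} \cong \tilde{\bC} M$ and in Theorem~\ref{lifting}.
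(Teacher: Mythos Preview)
Your proposal is correct and follows essentially the same approach as the paper: verify the hypotheses of Theorem~\ref{lifting} for $M$ with respect to the monads $\bK_{BG}$ and $\tilde{\bC}$, conclude that $M_{E_\infty}$ preserves indexed colimits, and then post-compose with the continuous left adjoint $S \sma_{\sL} -$. The paper's proof is terser (simply noting the hypotheses have been verified), and you are slightly more explicit than necessary --- the upgrade in your third step is already built into the conclusion of Theorem~\ref{t:pres}, and the left-adjoint property of $M$ is not needed for part~(i), only preservation of colimits and tensors --- but the substance is identical.
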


\begin{proof}
We have verified that the functor $M_{E_\infty}$ satisfies the
hypotheses of theorem \ref{lifting}, and so we can conclude that
$M_{E_\infty}$ commutes with indexed colimits.  Since $M_{\aC\aA_S}$ is
obtained from $M_{E_\infty}$ via composition with a continuous left
adjoint, the result follows.
\end{proof}

Since the Thom spectrum functor $M_{\aC\aA_S}$ preserves indexed
colimits, one would expect that it should in fact be a continuous left
adjoint.  We will prove this by showing that the hypotheses of the
second part of theorem \ref{lifting} are satisfied.  However, our
method of proof does not produce an explicit description of the right
adjoint and so is somewhat unsatisfying.

\begin{lem}
The Thom spectrum functor from $\aT / BG$ to $\aS \backslash S$ is a
left adjoint. 
\end{lem}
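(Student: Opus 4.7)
The plan is to apply Freyd's adjoint functor theorem in its dual form to produce a right adjoint for $M$ from the colimit-preservation property already established. The preceding lemma shows $M$ takes colimits in $\aT/BG$ to colimits in $\aS\backslash S$, and Proposition~\ref{P:ten} shows $M$ preserves tensors with unbased spaces, so $M$ is a cocontinuous functor which is continuous in the enriched sense.

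First I would verify that $\aT/BG$ is cocomplete, cowell-powered, and admits a small generating set. Cocompleteness follows directly from cocompleteness of $\aT$: colimits in $\aT/BG$ are computed by taking the colimit in $\aT$ and using the universal property to produce the induced map to $BG$. Cowell-poweredness is inherited from $\aT$ for a suitable convenient category of based spaces (e.g., compactly generated weak Hausdorff spaces). A small generating set is furnished by the objects $D^n_+ \to BG$ as $n$ varies over nonnegative integers and the structure maps vary over a small set of representatives, since every object of $\aT/BG$ is a colimit of such cell-like objects. With these hypotheses in place, the dual of Freyd's Special Adjoint Functor Theorem produces a right adjoint to the underlying unenriched functor $M$, and \cite[6.7.6]{borceux} then upgrades this to a topological adjunction since $M$ preserves tensors.

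The main obstacle is the set-theoretic bookkeeping required to produce the small generating set and verify cowell-poweredness; this is routine given standard conventions on the convenient category of topological spaces, but somewhat tedious. As the authors note, the argument is ultimately unsatisfying: the adjoint functor theorem is non-constructive and so provides no geometric description of the right adjoint associated to a unital spectrum $E \in \aS\backslash S$. An explicit construction would presumably describe, for each such $E$, a based space over $BG$ parameterizing the ways of realizing $E$ as a Thom spectrum, but producing such a description directly appears to be significantly harder than invoking the abstract theorem.
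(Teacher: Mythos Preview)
Your proposal is correct and follows essentially the same route as the paper: both invoke the Special Adjoint Functor Theorem after noting that $M$ preserves colimits, with the SAFT hypotheses for $\aT/BG$ inherited from $\aT$. The paper's proof is terser (it simply asserts that $\aT/BG$ satisfies the SAFT hypotheses because $\aT$ does), while you spell out cocompleteness, cowell-poweredness, and the generating set explicitly; your additional step upgrading to a topological adjunction via tensor preservation and \cite[6.7.6]{borceux} goes slightly beyond what this lemma asserts, as the paper defers the enriched aspect to the proof of Theorem~\ref{t:pres}.
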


\begin{proof}
We know that the Thom spectrum functor preserves colimits in $\aT /
BG$.  Moreover, it is easy to verify that the category $\aT / BG$
satisfies the hypotheses of the special adjoint functor theorem, since
$\aT$ does.  Therefore $M$ is a left adjoint. 
\end{proof}

Now, we have the following diagram of categories :
\[
\xymatrix{
\relax\aT / BG[\bK_{BG}] \ar@<-0.5ex>[d]_U \ar[r]^-{M_{E_\infty}} & \relax(\aS \backslash S)[\tilde{\bC}] \ar@<-0.5ex>[d]_V \\
\relax\aT / BG \ar@<-0.5ex>[u]_F \ar[r]^M & \relax(\aS \backslash S) \ar@<-0.5ex>[u]_G. \\
}
\]
Here $U$ and $V$ denote forgetful functors, and $F$ and $G$ denote the
free algebra functors.  Recall that $(\aS \backslash S)[\tilde{\bC}]$
is the category of $E_\infty$-ring spectra \cite[2.4.5]{ekmm}.  The
square commutes in the sense that $M \circ U = V \circ M_{E_\infty}$
and $M_{E_\infty} \circ F = G \circ M$.

\begin{cor}
The Thom spectrum functor $M_{\aC\aA_S}$ from $\aT / BG[\bK_{BG}]$ to
the category of commutative $S$-algebras is a continuous left adjoint.  
\end{cor}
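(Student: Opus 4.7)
The plan is to obtain the result by a direct application of the second part of Theorem \ref{lifting}, followed by composition with the known continuous left adjoint $S \sma_{\sL} -$. First I would assemble the hypotheses of Theorem \ref{lifting}(2) for the functor $M \colon \aT/BG \to \aS \backslash S$ together with the monads $\bK_{BG}$ on $\aT/BG$ and $\tilde{\bC}$ on $\aS \backslash S$. Three ingredients are needed: continuity of $M$, the monad-commutation isomorphism, and the fact that $M$ is a left adjoint on the underlying categories. Continuity and preservation of colimits and tensors for the underlying $M$ have already been established in Section~\ref{sec:thomleft} (using Lewis's results \cite[7.4.3, 7.4.6]{lms}); the coherent monad-commutation isomorphism $\tilde{\bC} M f \cong M(\bK_{BG} f)$ is exactly Lewis's \cite[7.7.1]{lms} as restated just above; and the lemma immediately preceding the corollary shows that $M \colon \aT/BG \to \aS \backslash S$ is itself a left adjoint, via the special adjoint functor theorem.

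With these hypotheses in hand, Theorem \ref{lifting}(2) applies directly and produces a continuous left adjoint
\[
M_{E_\infty} \colon (\aT/BG)[\bK_{BG}] \to (\aS \backslash S)[\tilde{\bC}].
\]
At this stage the job is essentially done: one only needs to pass from $E_\infty$-ring spectra to commutative $S$-algebras. By construction, $M_{\aC \aA_S}$ is the composite $(S \sma_{\sL} -) \circ M_{E_\infty}$, and the functor $S \sma_{\sL} -$ is a continuous left adjoint by \cite[2.3.6, 2.4.2]{ekmm}. Since the composite of two continuous left adjoints is again a continuous left adjoint (equivalently, by the standard fact that a functor of topological categories is a continuous left adjoint iff it preserves tensors and is a left adjoint of underlying categories, and both properties are closed under composition), the corollary follows.

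I do not expect a genuine obstacle here: every nontrivial input has been prepared in the preceding sections. The only point where one has to be careful is verifying that Theorem \ref{lifting}(2) really does apply in our setting, that is, that the monad-commutation isomorphism provided by \cite[7.7.1]{lms} satisfies the compatibility diagrams of Lemma \ref{l:monad-comm} with the unit and multiplication of both monads; this is explicitly asserted in the restated theorem above. Once that coherence is in place, the rest is formal and two-line, with no need to exhibit the right adjoint explicitly.
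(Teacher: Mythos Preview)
Your proposal is correct and follows essentially the same route as the paper: apply Theorem~\ref{lifting}(2) to conclude that $M_{E_\infty}$ is a continuous left adjoint, then compose with the continuous left adjoint $S \sma_{\sL} -$. The paper's proof is the same two-step argument, stated more tersely.
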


\begin{proof}
It follows from theorem \ref{lifting} that $M_{E_\infty}$ is a
continuous left adjoint.  Since $S \sma_{\sL} -$ is a continuous left
adjoint, the composite functor to commutative $S$-algebras is a
continuous left adjoint as well. 
\end{proof}

When restricting attention to vector bundles, we can refine this
result somewhat.  Recall that the categories of $\sL$-spaces,
$E_\infty$-ring spectra, and commutative $S$-algebras are all
categories of algebras over monads.  In each case, a model structure
is constructed by lifting a cofibrantly generated model structure from
the base category.  As a consequence, we have an explicit description
of the cell objects.  

In each case, the cell objects are colimits of pushouts of the form
\[
\xymatrix{
\bZ A \ar[d]\ar[r] & \ar[d] X_{n-1} \\
\bZ CA \ar[r] & X_n \\ 
}
\]
where $\bZ$ is the appropriate monad and where $A$ to $CA$ is a
generating cofibration in the base category.  For instance, in the
case of $\sL$-spaces, $A \rightarrow CA$ is a map of the form
\[\bigvee_i S^{n_i-1}_+ \rightarrow \bigvee_i D^{n_i}_+.\]
For the category of commutative $S$-algebras, $A \rightarrow CA$ is a
map of the form
\[\bigvee_i \Sigma^\infty S^{n_i-1}_+ \rightarrow \bigvee_i
\Sigma^\infty D^{n_i}_+\]
where here the suspension spectrum functor takes values in
$S$-modules.  The description for $E_\infty$-ring spectra is
analogous.

\begin{cor}
Let $G$ be a group $\sI$-monoid.  Then the functor $M_{\aC\aA_S}$ is a
Quillen left adjoint.
\end{cor}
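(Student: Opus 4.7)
The plan is to invoke cofibrant generation. Since $M_{\aC\aA_S}$ is already known to be a continuous left adjoint by the preceding corollary, it suffices to show that $M_{\aC\aA_S}$ sends the generating cofibrations and generating acyclic cofibrations of the lifted model structure on $(\aT/BG)[\bK_{BG}]$ to cofibrations and acyclic cofibrations respectively in $\aC\aA_S$.

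First I would identify the generating (acyclic) cofibrations. In the lifted model structure on $(\aT/BG)[\bK_{BG}]$, these are obtained by applying $\bK_{BG}$ to the generating (acyclic) cofibrations of $\aT/BG$, which have the form $(S^{n-1}_+ \to BG) \to (D^n_+ \to BG)$ with the maps to $BG$ factoring through the basepoint. Applying the theorem of Lewis that $M\bK_{BG}f \cong \tilde{\bC}Mf$ compatibly with monad structure, together with the observation that $M$ applied to a space over $BG$ with trivial classifying map is just its suspension spectrum (with unit adjoined), I see that $M_{E_\infty}$ takes the generating cofibrations to maps of the form $\tilde{\bC}(\Sigma^{\infty}S^{n-1}_+) \to \tilde{\bC}(\Sigma^{\infty}D^n_+)$ in $(\aS \backslash S)[\tilde{\bC}]$. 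By the explicit description of cell objects in the model category of $E_\infty$-ring spectra recalled above, these are precisely the generating cofibrations. The analogous statement for generating acyclic cofibrations follows in the same way from the isomorphism $M\bK_{BG} \cong \tilde{\bC}M$.

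Next I would compose with $S \sma_{\sL} -$, which is a continuous left adjoint and is known to be a Quillen left adjoint from the model category of $E_\infty$-ring spectra to $\aC\aA_S$ (by \cite[VII]{ekmm}). Since $S \sma_{\sL} -$ takes the generating free cell maps $\tilde{\bC}(\Sigma^\infty S^{n-1}_+) \to \tilde{\bC}(\Sigma^\infty D^n_+)$ to the corresponding generating cells in $\aC\aA_S$, we conclude that $M_{\aC\aA_S} = (S \sma_{\sL} -) \circ M_{E_\infty}$ carries generating cofibrations to generating cofibrations.

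The remaining point is to handle weak equivalences in the acyclic case. This is exactly where the hypothesis that $G$ is a group $\sI$-monoid enters: by the result of Lewis recalled earlier, when $G$ takes values in groups every map $f \colon X \to BG$ is good, and $M$ preserves weak equivalences between good maps. Thus $M$ (and hence $M_{\aC\aA_S}$, since $S\sma_\sL -$ preserves weak equivalences between cofibrant objects and the relevant $E_\infty$ ring spectra are of this form) sends generating acyclic cofibrations to weak equivalences, and combined with the previous paragraph these are acyclic cofibrations. The main potential obstacle is ensuring that the identification of generating cells in $(\aT/BG)[\bK_{BG}]$ really does land, after applying $M$, in the generating cells of $E_\infty$-ring spectra rather than some more complicated shape; this is where the restriction that the classifying maps of the generating cells factor through the basepoint of $BG$ is essential, and it is the only place the argument could go wrong.
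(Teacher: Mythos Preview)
Your overall architecture is right, but there is a genuine gap at exactly the point you flagged as the only possible obstruction. The generating cofibrations of the lifted model structure on $(\aT/BG)[\bK_{BG}]$ are $\bK_{BG}$ applied to the generating cofibrations of $\aT/BG$, and in the slice model structure on $\aT/BG$ these are \emph{all} maps $(S^{n-1}_+\to BG)\to(D^n_+\to BG)$ lying over the standard inclusion, with the based map $D^n_+\to BG$ arbitrary. They do \emph{not} all factor through the basepoint: if you only allowed the trivial classifying map you would not have enough maps to detect acyclic fibrations in $\aT/BG$, since the right lifting property would then only be tested against squares whose target maps to the basepoint of $BG$.

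The paper closes this gap by a different use of the group hypothesis than the one you isolate. Since $G$ is a group $\sI$-monoid, the pullbacks are honest bundles, and a bundle over the contractible space $D^n$ is isomorphic (not merely equivalent) to a trivial bundle; restricting this trivialization to $S^{n-1}$ shows the same for the boundary. Hence $M$ applied to $D^n_+\to BG$ and $S^{n-1}_+\to BG$ is homeomorphic to $\Sigma^\infty D^n_+$ and $\Sigma^\infty S^{n-1}_+$ respectively, and after $\tilde{\bC}$ and $S\sma_{\sL}-$ one lands on the generating cofibrations of $\aC\aA_S$ as you intended. So the group hypothesis is doing double duty: it makes all maps good (so $M$ preserves weak equivalences, which already handles the acyclic part without a separate argument), and it is what lets you trivialize the bundles underlying the generating cells despite their classifying maps being arbitrary.
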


\begin{proof}
In these cases all maps are good, and so $M$ preserves weak
equivalences.  Therefore, it will suffice to show that $M$ takes
the generating cofibrations and generating acyclic cofibrations to
cofibrations.  The generating cofibrations in $\aT[\bK_{BG}]$ are maps
of the form $h \colon \bK_{BG} A \rightarrow \bK_{BG} CA$, where $A$ is a
wedge of $S^{n_i-1}_+$ and $CA$ the corresponding wedge of $D^n_+$.
The maps from $D^n_+ \rightarrow BG$ is arbitrary, and these choices
determines the maps $S^{n_i-1} \rightarrow BG$.  Denote the map
$\bK_{BG} A \rightarrow BG$ by $h_1$ and the map $\bK_{BG} CA
\rightarrow BG$ by $h_2$.  Recall that $M \bK_{BG} f \cong \tilde{\bC}
Mf$.  In addition, a map from a contractible space to $BG$ represents
a bundle which is isomorphic to a trivial bundle.  Therefore, there is
a homeomorphism $Mh_1 \cong \tilde{\bC} \Sigma^\infty A$ and $Mh_2
\cong \tilde{\bC} \Sigma^\infty CA$.  The induced map $Mh \colon Mh_1
\rightarrow Mh_2$ clearly yields a generating cofibration in the
category of $E_\infty$-ring spectra structured by $\tilde{\bC}$.  The
analysis for the acyclic generating cofibrations is similar. 
\end{proof}

\section{Computing $THH$}\label{sec:compute}

The formula $M(f \otimes S^1) \cong Mf \otimes S^1$ is a point-set
result --- $Mf \otimes S^1$ is an object in the category of
commutative $S$-algebras.  In this section we discuss how to ensure
that $Mf \otimes S^1$ has the correct homotopy type so that it
represents $THH(Mf)$.  

For an $S$-algebra $R$, in analogy with the classical definition of
Hochschild homology as $\tor$ we define \[THH(R) = R \sma^{L}_{R \sma
R^{\op}} R.\]
In the algebraic setting, this $\tor$ can be computed via the
Hochschild resolution.  In spectra, this leads to a candidate
point-set description of $THH(R)$ as the cyclic bar construction
$N^{\cyc}(R)$.  The precise relationship between these is studied in
\cite[9.2]{ekmm}; the main result is that when $R$ is cofibrant they
are canonically isomorphic in the derived category of $R$-modules
\cite[9.2.2]{ekmm}.

First, observe that there is a derived version of the cyclic bar
construction in $\sL$-spaces.  This is a consequence of the very
useful fact that for a simplicial set $A_\cdot$ and an $\sL$-space
$X$, there is a homeomorphism $X \otimes |A_\cdot| \cong |X \otimes
A_\cdot|$ \cite[6.7]{basterra-mandell}.  When $A_\cdot$ has finitely many
nondegenerate simplices in each simplicial degree, this provides a
tractable description of the tensor with $|A_\cdot|$ in terms of
tensors with finite sets --- i.e., finite coproducts.

\begin{lem}
Let $g \colon X \rightarrow X^{\prime}$ be a weak equivalence of cofibrant
$\sL$-spaces.  Then there is an induced weak equivalence $g \otimes
S^1_+ \colon X \otimes S^1_+ \rightarrow X^{\prime} \otimes S^1_+$.
\end{lem}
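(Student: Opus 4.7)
The plan is to reduce the statement to the behavior of geometric realization on a simplicial $\sL$-space whose simplicial degrees are built out of finite coproducts. Using the homeomorphism $X \otimes |A_\ssdot| \cong |X \otimes A_\ssdot|$ cited from \cite[6.7]{basterra-mandell}, choose a simplicial model $S^1_\ssdot$ of the circle with finitely many nondegenerate simplices in each degree (for instance $\Delta^1/\partial\Delta^1$). Then $X \otimes S^1_+$ is homeomorphic to the geometric realization of the simplicial $\sL$-space $[k] \mapsto X \otimes (S^1_k)_+$, and in each simplicial degree this object is a finite coproduct of copies of $X$ in $\aT[\bK]$, because tensoring an $\sL$-space with a finite based set obtained by adjoining a disjoint basepoint is the same as taking the corresponding finite coproduct in $\aT[\bK]$.

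The first key step is to show that finite coproducts in $\aT[\bK]$ send weak equivalences between cofibrant $\sL$-spaces to weak equivalences. Since the topological model structure on $\aT[\bK]$ recalled above has generating cofibrations of the form $\bK A \to \bK CA$, finite coproducts of cofibrations are cofibrations, finite coproducts of acyclic cofibrations are acyclic cofibrations, and the conclusion follows from Ken Brown's lemma.

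The second step is to verify that the simplicial $\sL$-spaces $[k] \mapsto X \otimes (S^1_k)_+$ and $[k] \mapsto X' \otimes (S^1_k)_+$ are Reedy cofibrant, or at least proper in the sense that their degeneracies are cofibrations, when $X$ and $X'$ are cofibrant. The latching maps are built from inclusions of summands between finite coproducts of cofibrant $\sL$-spaces, so this reduces again to the fact that coproducts preserve cofibrations. Geometric realization then preserves levelwise weak equivalences between such simplicial objects, and the lemma follows from the first step applied in each degree.

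The main obstacle is verifying the compatibility between the topological model structure on $\aT[\bK]$, finite coproducts, and geometric realization, since the tensor in $\aT[\bK]$ is itself constructed as a reflexive coequalizer of free algebras and the simplicial formalism has to be threaded through this construction. A slicker alternative, which I would prefer if available, is to observe that because $S^1_+$ is cofibrant as a based space, the endofunctor $-\otimes S^1_+$ on $\aT[\bK]$ is a left Quillen endofunctor, and hence preserves weak equivalences between cofibrant objects by Ken Brown's lemma, bypassing the explicit simplicial analysis altogether.
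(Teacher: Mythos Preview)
Your main argument is essentially the paper's proof, spelled out in much more detail: the paper simply asserts that $X\otimes S^1_+$ is a proper simplicial space for any $\sL$-space $X$ and that $g\coprod g$ is a weak equivalence when $X,X'$ are cofibrant, and concludes. Your careful unpacking of properness via latching maps and of the coproduct step via Ken Brown is exactly the content behind those two assertions.

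Your closing alternative---observing that $-\otimes S^1_+$ is a left Quillen endofunctor on $\aT[\bK]$ and invoking Ken Brown directly---is a genuinely different and cleaner route that the paper does not take. It trades the explicit simplicial bookkeeping for a single appeal to the topological model structure (the pushout-product axiom, available from the EKMM references the paper cites). The simplicial argument has the advantage of making visible the cyclic-bar structure one actually wants to use later; your alternative has the advantage of dispatching the lemma in one line without worrying about properness of the underlying simplicial object, which, as you correctly flag, is a point both you and the paper pass over somewhat quickly since coproducts in $\aT[\bK]$ are not coproducts of underlying spaces.
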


\begin{proof}
Since $X \otimes S^1_+$ is a proper simplicial space for any
$\sL$-space $X$, the result follows from the fact that the induced map
$g \coprod g \colon X \coprod X \rightarrow X^{\prime} \coprod X^{\prime}$
is a weak equivalence when $X$ and $X^{\prime}$ are cofibrant.
\end{proof}

One might hope that for cofibrant $X$, $Mf$ is necessarily cofibrant
as a (commutative) $S$-algebra.  Of course when $M$ is a left Quillen
adjoint this holds, but in general it turns out that $Mf$ does belong
to a class of commutative $S$-algebras for which the point-set smash
product has the correct homotopy type.

\begin{thm}
\item{Let $f \colon X \rightarrow BG$ be a good $\sL$-map such that $X$ is
  a cell $\sL$-space.  Then $Mf \sma Mf$ represents the derived smash
  product.}
\end{thm}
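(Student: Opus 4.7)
The plan is to show that $Mf$, viewed as a commutative $S$-algebra, is itself a cell commutative $S$-algebra in the sense of \cite{ekmm}; the claim then follows from the standard homotopical control of smash products available for such objects.

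First, I would unpack the cell structure on $X$. As a cell $\sL$-space over $BG$, $X$ is the colimit of a sequential filtration $\{X_n\}$ in $(\aT/BG)[\bK_{BG}]$ in which each $X_{n+1}$ is obtained from $X_n$ by a pushout along a coproduct of generating cofibrations $\bK_{BG}A\to \bK_{BG}CA$, with $A$ a wedge of $S^{k-1}_+$'s and $CA$ the corresponding wedge of $D^k_+$'s; the maps $CA\to BG$ are arbitrary and determine the restrictions $A\to BG$.

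Next, I would apply the Thom spectrum functor $M_{\aC\aA_S}$ to this filtration. By the main theorem of Section~\ref{sec:thomleft}, $M_{\aC\aA_S}$ is a continuous left adjoint, so it preserves the pushouts and sequential colimit. Combining the natural isomorphism $M\bK_{BG}(-)\iso \tilde{\bC} M(-)$ with the observation that bundles classified by maps out of contractible spaces are trivial (exactly as in the final corollary of Section~\ref{sec:thomleft}), each attaching diagram becomes a pushout of commutative $S$-algebras
\[
\xymatrix{
S\sma_{\sL} \tilde{\bC} \Sigma^\infty A \ar[r]\ar[d] & Mf_n \ar[d] \\
S\sma_{\sL} \tilde{\bC} \Sigma^\infty CA \ar[r] & Mf_{n+1}.
}
\]
These are precisely the pushouts that build cell objects in the EKMM model structure on commutative $S$-algebras, whose generating cofibrations are obtained by applying $S\sma_{\sL}\tilde{\bC}\Sigma^\infty$ to wedges of $S^{k-1}_+\to D^k_+$. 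Hence $Mf$ is a cell commutative $S$-algebra, and in particular its underlying $S$-module lies in the class for which the point-set smash product over $S$ represents the derived smash product \cite[VII]{ekmm}.

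The main obstacle I anticipate lies in the identification step of the previous paragraph: one must verify that the isomorphism $M\bK_{BG}(-)\iso \tilde{\bC} M(-)$ matches the attaching data across the pushouts coherently, and that the trivializations of the Thom spectra over the contractible disks $CA$ restrict compatibly to trivializations over the boundary spheres $A$. Both compatibilities ultimately follow from Lewis's naturality results \cite[7.7.1]{lms} together with the free-algebra interpretation of $M\bK_{BG}A$ as $\tilde{\bC}\Sigma^\infty A$ for the contractible-component attaching maps, but carefully unwinding the cell-by-cell bookkeeping and confirming that the induced commutative $S$-algebra cell structure is indeed standard is where the real technical work lives. The goodness hypothesis on $f$ enters by guaranteeing that the Thom-spectrum functor behaves homotopically on the intermediate stages, so that the cellular colimit computed in Section~\ref{sec:thomleft} agrees with the homotopy colimit and the resulting cell commutative $S$-algebra is weakly equivalent to $Mf$.
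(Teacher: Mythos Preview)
Your strategy reproduces the Quillen left adjoint argument from the final corollary of Section~\ref{sec:thomleft}, but that corollary carries the hypothesis that $G$ is a \emph{group} $\sI$-FCP, and the theorem you are proving does not. The step that breaks is the identification $M\bK_{BG}A \cong \tilde{\bC}\,\Sigma^\infty A$ for the sphere cells $A$. When $G$ is a group, the pullback of the universal bundle along any map is an honest fiber bundle, so a bundle over the contractible disk $CA$ is trivializable and its restriction to the boundary spheres is trivializable compatibly; that is what produces the homeomorphism in the corollary. For a monoid $G$ such as $F$, the pullback is only a quasifibration (this is exactly what ``good'' buys you), and quasifibrations over contractible bases are not trivializable as fibrations. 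The map $A \to BG$ is null-homotopic, so $MA$ is \emph{weakly equivalent} to $\Sigma^\infty A$, but not isomorphic to it. Consequently your pushout is not along a generating cofibration of commutative $S$-algebras, and $Mf$ is not a cell commutative $S$-algebra on the nose.

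The paper's proof works around this by aiming for a weaker target: rather than showing $Mf$ is a cell commutative $S$-algebra, it shows the underlying $S$-module of $Mf$ has the homotopy type of an \emph{extended} cell $S$-module in the sense of Basterra. The point is that one only needs $MA$ and $MCA$ to be cell $S$-modules (up to homotopy), not literally suspension spectra; for $MA$ this comes from Lewis' description \cite[7.3.8]{lms} of Thom spectra over suspensions. One then checks that $\tilde{\bC}MA$ and $\tilde{\bC}MCA$ are extended cell $S$-modules via the standard filtration of the reduced monad, and propagates this through the cell filtration using the two-sided bar construction trick of \cite[7.7.5]{ekmm} in place of a literal pushout identification. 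Your argument is cleaner when it applies, but the extended-cell route is what is needed at the stated generality.
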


Recall the notion of an extended cell module \cite[9.6]{basterra}.  An
extended $S$-cell is a pair $(X \sma B^n_+, X \sma S^{n-1}_+)$, where
$X = S \sma_{\sL} \sL(i) \thp_G K$ for a $G$-spectrum $K$ indexed on
$U^i$ which has the homotopy type of a $G$-CW-spectrum for some $G
\subset \Sigma^i$.  An extended cell $S$-module is an $S$-module $M =
\colim M_i$ where $M_0 = 0$ and $M_n$ is obtained from $M_{n-1}$ by a
pushout of $S$-modules of the form

\[
\xymatrix{
\bigvee_j X_j \sma S^{n_j-1}_+ \ar[d]\ar[r] & \ar[d] M_{n-1} \\
\bigvee_j X_j \sma B^{n_j}_+ \ar[r] & M_n. \\
}
\]

Extended cell $S$-modules have the correct homotopy type for the
purposes of the smash product.  Therefore, it will suffice to show the
following result.

\begin{prop}
Let $f \colon X \rightarrow BG$ be a good $E_\infty$-map over the linear 
isometries operad such that $X$ is a cell $\sL$-space.  Then the
underlying $S$-module of the $S$-algebra $Mf$ has the homotopy type of
an extended cell $S$-module.
\end{prop}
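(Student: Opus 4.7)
The plan is to track the explicit cellular decomposition of $X$ through the Thom spectrum functor and recognize the result as an extended cell $S$-module. Since $X$ is a cell $\sL$-space over $BG$, we may write $X = \colim_n X_n$ with each inclusion $X_{n-1} \to X_n$ obtained as a pushout in $(\aT/BG)[\bK_{BG}]$ of a generating cofibration $\bK_{BG} A_n \to \bK_{BG} CA_n$, where $A_n = \bigvee_i S^{n_i-1}_+$ and $CA_n = \bigvee_i D^{n_i}_+$ carry prescribed maps to $BG$. Because each $D^{n_i}$ is contractible, the pullback bundle over $CA_n$ is trivial, so $M(CA_n \to BG) \cong \Sigma^\infty CA_n$; similarly $MA_n \cong \Sigma^\infty A_n$.

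First I would apply $M$ to the filtration. Since $M$ preserves colimits by Theorem~\ref{commutation}, and Lewis's identification gives $M\bK_{BG} \cong \tilde{\bC} M$, this exhibits $Mf$ as a sequential colimit along pushouts of $E_\infty$-ring spectra whose attaching corners are $\tilde{\bC}\Sigma^\infty A_n \to \tilde{\bC}\Sigma^\infty CA_n$. Because $f$ is good, $M$ preserves the weak equivalences in play, so I may treat these as homotopy pushouts in the underlying category of $S$-modules and work up to stable equivalence throughout.

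Next I would pass to commutative $S$-algebras via the continuous left adjoint $S \sma_{\sL} -$ and analyze the underlying $S$-module of each pushout layer. Applying the standard symmetric-product filtration of the free commutative $S$-algebra monad, the $j$-th associated graded piece of $\tilde{\bC}\Sigma^\infty CA_n$ relative to $\tilde{\bC}\Sigma^\infty A_n$ is built from an extended cell of the form $S \sma_{\sL} \sL(j) \thp_{\Sigma_j} K_{n,j}$, where $K_{n,j}$ is a $\Sigma_j$-CW-spectrum assembled from equivariant smash powers of the cone inclusion $\Sigma^\infty A_n \to \Sigma^\infty CA_n$. Iterating this analysis along the cellular filtration of $X$ and collecting the resulting attachments exhibits $Mf$ as an extended cell $S$-module in the sense of Basterra.

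The principal obstacle is the second step: checking cleanly that each pushout along $\tilde{\bC}\Sigma^\infty A_n \to \tilde{\bC}\Sigma^\infty CA_n$ really does attach a wedge of extended cells in the precise sense required, rather than merely something with the correct homotopy type. This reduces to the familiar but delicate analysis of the symmetric-product filtration on a free commutative $S$-algebra pushout, parallel to the arguments in EKMM~VII.6 and to Basterra's treatment of extended cell modules. Once this decomposition is secured, the induction on the cellular filtration of $X$ is routine.
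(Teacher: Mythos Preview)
Your proposal follows essentially the same route as the paper: filter $X$ by $\sL$-cells, apply $M$ using colimit preservation and the identification $M\bK_{BG}\cong\tilde{\bC}M$, then reduce to showing that free $\tilde{\bC}$-algebras on cell spectra and the successive pushouts are extended cell $S$-modules via the standard symmetric-power filtration.

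Two small points of divergence are worth noting. First, your identification $MA_n\cong\Sigma^\infty A_n$ is a slight overclaim in the generality of the proposition: the map $A_n\to BG$ is only null-homotopic (through $CA_n$), so without the group hypothesis one gets a weak equivalence rather than a homeomorphism. The paper sidesteps this by invoking instead that a Thom spectrum over a suspension is a cell $S$-module \cite[7.3.8]{lms}, which is all that is needed. Second, for the inductive pushout the paper models $Mf_i$ as the two-sided bar construction $B(\tilde{\bC}MCA,\tilde{\bC}MA,Mf_{i-1})$, a proper simplicial spectrum whose levels are manifestly extended cell modules; you instead propose the EKMM~VII.6 pushout filtration. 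Both devices work and are standard, but the bar-construction route avoids having to track the interaction of the symmetric-power filtration with the previously attached cells, which is exactly the ``principal obstacle'' you flag.
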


\begin{proof}
By hypothesis, $X = \colim X_i$ where $X_0 = *$ and $X_i$ is obtained
from $X_{i-1}$ as the pushout 
\[
\xymatrix{
\tilde{\bK}A \ar[r]\ar[d] & \ar[d] X_{i-1} \\
\tilde{\bK}CA \ar[r] & X_i \\
}
\]
where $A$ is a wedge of spheres $S^{n_i-1}_+$ and $CA$ is the associated
wedge of $D^{n_i}_+$.  Since $M$ commutes with colimits and $M\bK g
\cong \tilde{\bC} Mg$, we have that $Mf = \colim Mf_i$ where $Mf_0 =
S$ and $Mf_i$ is obtained from $Mf_{i-1}$ as the pushout  
\[
\xymatrix{
\tilde{\bC}MA \ar[r]\ar[d] & \ar[d] Mf_{i-1} \\
\tilde{\bC}MCA \ar[r] & Mf_i. \\
}
\]
As $CA$ is a contractible space with a disjoint basepoint, $MCA$ is
homotopy equivalent to a cell $S$-module.  $MA$ is the wedge of a Thom
spectrum over a suspension with $S$, and so we know that it is also a
cell $S$-module \cite[7.3.8]{lms}.  Temporarily assume that $\tilde{\bC}MA$
and $\tilde{\bC}MCA$ are extended cell $S$-modules.  Then we proceed as in
\cite[7.7.5]{ekmm}.  $Mf_i$ is isomorphic under $Mf_{i-1}$ to the
two-sided bar construction $B(\tilde{\bC}MCA, \tilde{\bC}MA,
MX_{i-1})$.  This is a proper simplicial spectrum, and since each
simplicial level is an extended cell module and the face and
degeneracy maps are cellular, so is the bar construction.  By passage
to colimits, the result follows.

To see that $\tilde{\bC}MA$ and $\tilde{\bC}MCA$ are extended cell
$S$-modules, we essentially argue as in \cite[7.7.5]{ekmm} but must
account for the quotients since we are using the reduced monads.
Recall that there is a standard filtration on the reduced monads
\cite[7.3.6]{lms}, which allows us to regard the free $\tilde{\bC}$
algebra as the colimit of spectra formed by pushouts of layers of the
form $Z^j / \Sigma^j$.  These are extended cell $S$-modules, and then
a similar induction as above allows us to conclude the
result.
\end{proof}

There is an additional difficulty that arises when working over $BF$;
it seems to be difficult to replace an arbitrary map of $\sL$-spaces
$X \to BF$ with a map $X' \to BF$ which is a Hurewicz fibration and
such that $X'$ is cofibrant as an $\sL$-space.  However, it turns out
to suffice to work with the following composite replacement --- given
an arbitrary map of $\sL$-spaces $X \to BF$, we work with $\Gamma
X' \to BF$,  where $X'$ is a cofibrant replacement of $X$.  For a
detail analysis of this situation, we refer the reader to the
companion paper \cite{blumberg-cohen-schlichtkrull}, as it depends on
a description of $\sL$-spaces as commutative monoids with respect to a
product on the category of $\sL(1)$-spaces defined in analogy with the
EKMM smash product.



\section{Splitting of $THH(Mf)$}\label{sec:split}

In the previous section, we have verified that by appropriate
modification of the map $f \colon X \rightarrow BG$ we can ensure that we
can identify $THH(Mf)$ as $M(f \otimes S^1)$.  In this section, we
study $M(f \otimes S^1)$.  In particular, we will discuss briefly a
connection to the free loop space $LBX$ and then investigate in detail
the splitting result $THH(Mf) \htp Mf \sma BX_+$. 

The starting point for our analysis is the observation that the based
cofiber sequence $S^0 \rightarrow S^1_+ \rightarrow S^1$ yields an
associated sequence of $\sL$-spaces 
\[X \rightarrow X \otimes S^1_+ \rightarrow X \otimes S^1.\]
The map $X \rightarrow X \otimes S^1_+$ is split by the collapse map
$S^1_+ \rightarrow S^0$, and this induces a map $\theta : X \otimes S^1_+
\rightarrow X \times (X \otimes S^1)$.

\begin{rem}
Recall that $X \otimes S^1_+$ is the realization of the simplicial
object $X \otimes (S^1_+)_\bullet$ induced by the standard description
of $S^1_+$ as a simplicial set.  This is in fact a cyclic object, and
therefore $X \otimes S^1_+$ has an action of $S^1$ induced by the
cyclic structure.  The adjoint of the action map composed with the
projection $X \otimes S^1_+ \rightarrow X \otimes S^1$ yields a map $X
\otimes S^1_+ \rightarrow L(X \otimes S^1)$ which is a weak
equivalence for group-like $\sL$-spaces.  When working over a group
$\sI$-FCP, this weak equivalence implies a weak equivalence of Thom
spectra, and so we obtain a description of $THH(Mf)$ in terms of a map
$L(BX) \rightarrow BG$.  This relationship is studied in detail in the
companion paper \cite{blumberg-cohen-schlichtkrull}, and we do not
discuss it further herein.
\end{rem}

\subsection{The splitting arising from an $E_\infty$-map}

In this section, we will assume we have a fixed $\sL$-map $f \colon X
\rightarrow BG$ such that $X$ is a group-like $\sL$-space and $G$ is a
group $\sI$-FCP.  We require this latter hypothesis to ensure that all
maps that arise are good.

\begin{lem}
Let $X$ be a group-like cofibrant $\sL$-space.  The map $\theta : X \otimes
S^1_+ \rightarrow X \otimes S^1 \times X \otimes S^0$ is a weak
equivalence.
\end{lem}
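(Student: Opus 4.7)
The plan is to exploit the based-space splitting $S^1_+ \cong S^1 \vee S^0$ (with the wedge point identified with both basepoints) together with the fact that for group-like $\sL$-spaces the canonical comparison map from the $\sL$-coproduct to the Cartesian product is a weak equivalence.

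The based tensor $X \otimes (-) \colon \aT \to \aT[\bK]$ is a left adjoint (with right adjoint the enriched mapping space functor), so it preserves colimits in its based-space argument. Applied to the splitting $S^1_+ \cong S^1 \vee S^0$, this yields a natural isomorphism of $\sL$-spaces
\[
X \otimes S^1_+ \;\cong\; (X \otimes S^1) \amalg (X \otimes S^0) \;=\; (X \otimes S^1) \amalg X,
\]
where $\amalg$ denotes the coproduct in $\aT[\bK]$. Tracing through definitions, the map $\theta$ corresponds under this identification to the canonical comparison map from the $\aT[\bK]$-coproduct to the Cartesian product of underlying spaces. The lemma thus reduces to the following general claim: for group-like $\sL$-spaces $A$ and $B$, the canonical map $A \amalg B \to A \times B$ is a weak equivalence. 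We apply this with $A = X \otimes S^1$ and $B = X$; the hypothesis gives that $X$ is group-like, while $X \otimes S^1$ is a model for the classifying space $BX$, which is connected and hence automatically group-like.

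For the reduction claim, I would pass through the infinite loop space machinery, which identifies the homotopy category of group-like $\sL$-spaces with that of connective spectra. Group completion is a left adjoint and therefore preserves coproducts, so the $\aT[\bK]$-coproduct of group-like spaces corresponds to the wedge of the associated connective spectra. Moreover, $A \amalg B$ is itself group-like: $\pi_0$ of the coproduct in $\aT[\bK]$ is the coproduct of commutative monoids, which for abelian groups is the direct sum and hence again an abelian group. In the stable setting the canonical map $E \vee F \to E \times F$ between wedge and Cartesian product of connective spectra is a weak equivalence, and this transfers back to a weak equivalence $A \amalg B \to A \times B$ of $\sL$-spaces.

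The cofibrancy hypothesis on $X$ enters to ensure that the based tensor has the correct homotopy-theoretic behavior, so that $X \otimes S^1$ and $X \otimes S^1_+$ faithfully represent $BX$ and the cyclic bar construction. The principal technical obstacle is carefully justifying the spectrum-level correspondence between $\aT[\bK]$-coproducts and wedges of connective spectra; without the group-like hypothesis the coproduct in $\aT[\bK]$ can differ drastically from the Cartesian product, and the argument depends essentially on the stable fact that wedge agrees with product in connective spectra.
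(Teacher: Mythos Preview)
The first step contains a genuine error: there is no based homotopy equivalence $S^1_+\simeq S^1\vee S^0$, let alone a homeomorphism. In $S^1_+$ the basepoint is the isolated disjoint point, so the basepoint component is contractible and $\pi_1(S^1_+,*)=0$; in $S^1\vee S^0$ the basepoint lies on the circle and $\pi_1(S^1\vee S^0)=\Z$. Any based map $S^1\to S^1_+$ is null (the image must lie in the basepoint component), and any based map $S^1_+\to S^1\vee S^0$ either kills $\pi_1$ or fails to be a $\pi_0$-bijection. The splitting $S^1_+\simeq S^1\vee S^0$ is a \emph{stable} phenomenon. Consequently your appeal to ``$X\otimes(-)$ is a left adjoint, hence preserves the wedge decomposition'' does not produce an identification $X\otimes S^1_+\cong (X\otimes S^1)\amalg X$; there is no wedge decomposition of based spaces to preserve.

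What the paper does is exactly the missing ingredient you gesture at in your last paragraph but never actually supply: it proves that for cofibrant group-like $X$ there is a weak equivalence $X\otimes A\simeq \Omega^\infty(Z\sma A)$, where $Z$ is the connective spectrum associated to $X$, using the $\Gamma$-space~/~$\aW$-space description of the tensor. Only \emph{after} this identification does one invoke the stable splitting $Z\sma S^1_+\simeq (Z\sma S^1)\vee Z\simeq (Z\sma S^1)\times Z$ and then apply $\Omega^\infty$. Your ``reduction claim'' that coproducts and products of group-like $\sL$-spaces agree is morally the same stable fact, but it cannot be used as you propose because the coproduct $(X\otimes S^1)\amalg X$ was never shown to be $X\otimes S^1_+$. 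The correct order is: pass to spectra first (this is the substantive step requiring the $\aW$-space argument and the cofibrancy hypothesis), then use the stable splitting.
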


\begin{proof}
Since $\sL$ is an $E_\infty$-operad, we can functorially associate an
$\Omega$-prespectrum $Z$ to $X$ using an ``infinite loop space
machine''.  We will show that that $X \otimes A$ is weakly equivalent
to $\Omega^\infty (Z \sma A)$.  Assuming this fact, the lemma is now a
consequence of the stable splitting of $S^1_+$.  Specifically, there
is a chain of equivalences $Z \sma S^1_+ \htp (Z \sma S^0) \vee (Z
\sma S^1) \htp (Z \sma S^0) \times (Z \sma S^1)$.  Applying
$\Omega^\infty$ to this composite yields an equivalence $\Omega^\infty
(Z \sma S^1_+) \rightarrow (\Omega^\infty Z) \times (\Omega^\infty (Z \sma
S^1))$, since $\Omega^\infty$ preserves products and weak equivalences
of spectra.  Under the equivalence between $X$ and $\Omega^\infty Z$,
this map coincides with the map induced from the splitting and so the
result follows.

To compare $X \otimes A$ and $Z \sma A$, we use a technique from
\cite{basterra-mandell}.  Let $\tilde{X}$ denote the functor which
assigns to a finite set $\underbar{n}$ the tensor $X \otimes
\underbar{n}$.  Using the folding map, this specifies a
$\Gamma$-object in $\sL$-spaces.  Recall that the construction of a
prespectrum from a $\Gamma$-object proceeds by prolonging the
$\Gamma$-object to a functor from the category of spaces of the
homotopy type of finite $CW$-complexes.  Such a functor is called a
$\aW$-space, and is an example of a diagram spectrum \cite{mmss}.  In
this situation, the associated $\aW$-space can be specified simply as
$A \mapsto X \otimes A$.  For any $\aW$-space $Y$ and based space $A$,
there is a stable equivalence between the prespectrum $\{Y(S^n) \sma
A\}$ and the prespectrum $\{Y(A \sma S^n)\}$ induced by the assembly map
$Y(S^n) \sma A \rightarrow Y(A \sma S^n)$ \cite[17.6]{mmss}.  Since
$X$ was a cofibrant group-like $\sL$-space, $\tilde{X}$ is very special
\cite[6.8]{basterra-mandell}. Therefore the associated $\aW$-space
$\tilde{X}$ is fibrant, which means that the underlying prespectra
$\{\tilde{X}(S^n \sma A)\}$ are $\Omega$-prespectra for all $A$.
Finally, this implies that there is an equivalence between
$\Omega^\infty (Z \sma A)$ and $Z(A)$.  A similar result (with a
different proof) appears in \cite{schlichtkrull}.
\end{proof}

\begin{prop}
Let $f \colon X \rightarrow BG$ be an $\sL$-map where $G$ is a group
$\sI$-FCP and $X$ is a cofibrant group-like $\sL$-space.  Then there
is a weak equivalence of commutative $S$-algebras
\[Mf \otimes S^1 \simeq BX_+ \sma Mf.\]
\end{prop}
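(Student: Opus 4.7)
The plan is to chain together three identifications. First, Theorem~\ref{commutation} yields an isomorphism of commutative $S$-algebras
\[Mf \otimes S^1 \cong M(f \otimes S^1),\]
where $f \otimes S^1$ denotes the unbased tensor in $(\aT/BG)[\bK_{BG}]$. By the explicit description of this tensor recalled in Section~\ref{S:einf}, the object $f \otimes S^1$ is the $\sL$-map with total space $X \otimes S^1_+$ whose structure map to $BG$ is the composite $X \otimes S^1_+ \to X \otimes S^0 = X \xrightarrow{f} BG$.

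Next, the preceding lemma supplies a weak equivalence of $\sL$-spaces
\[\theta \colon X \otimes S^1_+ \htp (X \otimes S^1) \times X.\]
By construction, the second component of $\theta$ is the collapse $X \otimes S^1_+ \to X \otimes S^0 = X$, so if we equip $(X \otimes S^1) \times X$ with the structure map $f \circ \pi_2$, then $\theta$ is a morphism over $BG$. Because $G$ is a group $\sI$-FCP, every $\sL$-map to $BG$ is good, and hence the Thom spectrum functor preserves this weak equivalence, yielding
\[M(f \otimes S^1) \htp M\bigl(f \circ \pi_2 \colon (X \otimes S^1) \times X \to BG\bigr).\]

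Finally, the classifying map $f \circ \pi_2$ factors through $X$, so the pullback of the universal bundle on $BG$ along it is pulled back from $X$ and is therefore trivial in the $X \otimes S^1$ direction. Applying Proposition~\ref{P:ten} (after collapsing the basepoint stratum $(X \otimes S^1) \times \{*\}$, which maps to $*_{BG}$) identifies this Thom spectrum with $Mf \sma (X \otimes S^1)_+$, and since the based tensor $X \otimes S^1$ is a model for $BX$ as an $\sL$-space, we obtain
\[M(f \circ \pi_2) \htp Mf \sma (X \otimes S^1)_+ \htp Mf \sma BX_+.\]
Chaining the three equivalences produces the desired equivalence of commutative $S$-algebras.

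The main technical obstacle is the final step. Strictly speaking, Proposition~\ref{P:ten} applies to the based tensor $X \sma A_+$ rather than the cartesian product $(X \otimes S^1) \times X$ of $\sL$-spaces, so one must verify that quotienting by the basepoint stratum (all of which is sent to $*_{BG}$, where the pullback bundle is trivial) does not change the Thom spectrum. One also needs to confirm that each equivalence in the chain refines to a weak equivalence of commutative $S$-algebras; this holds because every intermediate object is the Thom spectrum of an $\sL$-map, every intermediate morphism is induced by an $\sL$-map over $BG$, and the identification $X \otimes S^1 \htp BX$ is one of $\sL$-spaces.
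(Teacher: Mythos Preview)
Your proof follows the same strategy as the paper's: identify $Mf\otimes S^1$ with $M(f\otimes S^1)$ via Theorem~\ref{commutation}, factor the classifying map through $\theta$ and the projection to $X$, invoke the preceding lemma to see that $\theta$ is a weak equivalence (hence induces one on Thom spectra since $G$ is a group $\sI$-FCP), and then identify the Thom spectrum of the projection using Proposition~\ref{P:ten} together with $X\otimes S^1\htp BX$. The paper's argument is essentially identical, with the two factors of the product written in the opposite order.

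Your flagged ``technical obstacle'' is slightly miscalibrated. The underlying result of Lewis behind Proposition~\ref{P:ten} is stated for $\aU/BG$: the Thom spectrum of a projection $X\times A \to X \xrightarrow{f} BG$ is naturally $Mf\sma A_+$. This applies directly to $X\times(X\otimes S^1)\to X\to BG$ with $A=X\otimes S^1$, which is exactly how the paper uses it---no collapsing is needed. Your proposed fix of first collapsing the stratum $(X\otimes S^1)\times\{*\}$ would in fact \emph{change} the Thom spectrum: that stratum carries the trivial bundle, so its Thom spectrum is $\Sigma^\infty(X\otimes S^1)_+$ rather than $S$, and collapsing amounts to forming the pushout $(Mf\sma(X\otimes S^1)_+)\cup_{\Sigma^\infty(X\otimes S^1)_+} S$, which is the tensor in $\aS\backslash S$ rather than $Mf\sma(X\otimes S^1)_+$ itself. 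So the direct unbased route is the correct one, and with that adjustment your argument and the paper's coincide.
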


\begin{proof}
By inspection of the description of the map 
$f \otimes S^1_+ : X \otimes S^1_+ \rightarrow BG$, we see that it can
be factored  
\[
\xymatrix{
X \otimes S^1_+ \ar[r]^-\theta & (X \otimes S^0) \times (X \otimes S^1)
\ar[r]^-{\pi_1} & X \otimes S^0 \cong X \ar[r]^-f & BG,
}
\] 
where $\pi_1$ is the projection onto the first factor.  By the
preceding lemma, the hypotheses imply that the map $\theta : X \otimes
S^1_+ \rightarrow (X \otimes S^1) \times (X \otimes S^0)$ is a weak
equivalence.  Therefore, there is an equivalence of Thom spectra
$M\theta : M(f \otimes S^1_+) \rightarrow M(f \circ \pi_1)$.  By the
standard description of the Thom spectrum of a projection (proposition
\ref{P:ten}), we know that $M(f \circ \pi_1) \cong Mf \sma (X \otimes
S^1)_+$.  Moreover, theorem \ref{commutation} implies that $M(f
\otimes S^1_+) \cong Mf \otimes S^1$.  Finally, $X \otimes S^1$ is a
model of $BX$ --- this follows by considering the $\Gamma$-space
associated to $X$ as in the previous lemma
\cite[6.5]{basterra-mandell}. 
\end{proof}

\subsection{Splitting arising from an $E_2$-map $f \colon X \rightarrow BG$}

It is sometimes the case that even though $f \colon X \rightarrow BG$ is
not an $E_\infty$-map, $Mf$ is equivalent to a commutative
$S$-algebra.  We will consider the situation in which $f \colon X
\rightarrow BG$ is an $E_2$-map such that the there is an equivalence
of $E_2$-ring spectra from $Mf$ to an $E_\infty$-ring spectrum.
Although this may seem at first like an artificial hypothesis, in fact
this situation arises when considering the Thom spectra that yield
Eilenberg-Mac Lane spectra.  We will show that the splitting result
holds here as well.

Fix an $E_2$-operad $\aC_2$ which is augmented over the linear
isometries operad.  Then $BG$ is a $\aC_2$-space and Lewis' theorem
\cite[7.7.1]{lms} shows that the Thom spectrum associated to an
$\aC_2$-map $f \colon X \rightarrow BG$ is a $\aC_2$-ring spectrum.

Recall that there is a two-sided bar construction for spectra
\cite[4.7.2]{ekmm}.  Let $R$ be a commutative $S$-algebras.  If $A$ is
a left $R$-module and $N$ a right $R$-module, the bar construction
$B(A,R,N)$ is the realization of a simplicial spectrum in which the
$k$-simplices are given by $A \sma R^k \sma N$ and the faces are given
by the multiplication.  When $R$ is a cofibrant commutative
$S$-algebra and $A$ is a cofibrant $R$-module, the bar construction is
naturally weakly equivalent to $A \sma_R N$ and weak equivalences in
each variable induce weak equivalences of bar constructions.

\begin{rem}
A simplicial spectrum $K$ is proper if the ``inclusion'' $sK_q
\rightarrow K_q$ is a cofibration, where $sK_q$ is the ``union'' of
the subspectra $s_j K_{q-1}, 0 \leq j < q$ \cite[10.2.2]{ekmm}.  Of
course, the ``union'' denotes an appropriate pushout, and the ``inclusion''
associated maps, but the terms are useful to emphasize the analogy
with the situation in spaces.  Maps between proper simplicial spectra
which induce levelwise equivalences produce weak equivalences upon
realization \cite[10.2.4]{ekmm}.  When $R$ is a cofibrant commutative
$S$-algebra and $A$ is a cofibrant $R$-module, the bar construction is
a proper simplicial spectrum. 
\end{rem}

\begin{thm}\label{T:e2split}
Let $f \colon X \rightarrow BG$ be a good $\aC_2$-map.  Assume that $Mf$ is
equivalent as a homotopy commutative $S$-algebra to some (strictly)
commutative $S$-algebra $M^{\prime}$.  Then there is an isomorphism in
the derived category 
\[THH(Mf) \simeq BX_+ \sma Mf.\]
\end{thm}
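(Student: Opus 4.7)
The plan is to leverage the multiplicative Thom isomorphism
\[Mf \sma Mf \htp Mf \sma X_+\]
to rewrite the bar construction computing $THH(Mf)$, replacing the enveloping algebra $Mf \sma Mf^{\op}$ by the group ring $Mf \sma X_+$. Once this replacement is made, the desired splitting is a formal consequence of the classical bar-construction model of $BX$.

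First, I would model $THH(Mf)$ as a derived smash product. Since $\gamma \colon Mf \to M^\prime$ is a weak equivalence of homotopy commutative $S$-algebras with $M^\prime$ strictly commutative, we have
\[THH(Mf) \htp THH(M^\prime) \htp M^\prime \sma^L_{M^\prime \sma M^\prime} M^\prime \htp Mf \sma^L_{Mf \sma Mf^{\op}} Mf,\]
with the homotopy commutativity identifying $Mf^{\op}$ with $Mf$ in the derived category and the strictly commutative model $M^\prime$ ensuring that the point-set bar construction has the correct homotopy type.

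Second, using the $\aC_2$-structure I would promote the Thom isomorphism to an equivalence of $Mf$-bimodules $Mf \sma Mf \htp Mf \sma X_+$ under which the multiplication $\mu \colon Mf \sma Mf \to Mf$ corresponds to the projection $Mf \sma X_+ \to Mf$ induced by the collapse $X_+ \to S^0$. This equivalence arises from the Mahowald shear: for grouplike $X$ and multiplicative $f$, the composite $f \circ \mu_X \colon X \times X \to BG$ is homotopic to $f \circ \pi_1 \colon X \times X \to BG$ via the shear $(x,y) \mapsto (x, xy)$, and applying the Thom spectrum functor yields the stated equivalence (using Proposition~\ref{P:ten} to identify the projection side). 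Transporting the $(Mf \sma Mf^{\op})$-action on $Mf$ across this equivalence, $Mf$ acquires an $(Mf \sma X_+)$-module structure in which $X_+$ acts trivially through the unit augmentation $X_+ \to S^0$, giving
\[Mf \sma^L_{Mf \sma Mf^{\op}} Mf \htp Mf \sma^L_{Mf \sma X_+} Mf\]
with trivial $X_+$-action on both factors.

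Finally, I would compute the right-hand side via the two-sided bar construction $B(Mf, Mf \sma X_+, Mf)$. Because $X_+$ acts trivially on both copies of $Mf$, the simplicial structure decouples as a smash product of the $Mf$-bar construction $B(Mf, Mf, Mf)$, which realizes to $Mf$, with the reduced monoid bar construction $B(\ast, X_+, \ast)$, which realizes to $BX_+$ since $X$ is grouplike; this produces $THH(Mf) \htp Mf \sma BX_+$, as required. The main obstacle will be step two: upgrading the Thom isomorphism to a bimodule equivalence with enough coherence to transport the $(Mf \sma Mf^{\op})$-action across. The $\aC_2$-hypothesis is essential here, since the $E_2$ braiding is what makes the Thom iso compatible with both the left and the right module structures simultaneously, and the strictly commutative model $M^\prime$ then supplies the ambient coherence needed to interpret the derived tensor product, as $Mf$ alone is not commutative enough to support the two-sided bar construction at the point-set level.
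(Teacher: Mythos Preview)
Your proposal follows essentially the same strategy as the paper: model $THH$ as a derived smash product over the enveloping algebra, use the Thom isomorphism to replace the enveloping algebra by a ``group ring,'' and then decouple the resulting bar construction into $B(M',M',M') \sma B(*,X_+,*)$. The difference lies in how the crucial second step is executed.

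You propose to upgrade the shear-map Thom isomorphism $Mf \sma Mf \htp Mf \sma X_+$ directly to an $Mf$-bimodule equivalence, and you correctly flag this as the main obstacle. The paper sidesteps that obstacle entirely. Rather than seeking bimodule structure over the merely-$E_2$ ring $Mf$, it first replaces the two outer copies of $Mf$ by $M'$ via $\gamma$, obtaining $B(M', Mf \sma Mf^{\op}, M')$, and then produces an $M'$-\emph{algebra} equivalence $M' \sma Mf^{\op} \to M' \sma X^{\op}_+$. This equivalence is built not from the shear map but from the Thom diagonal $Mf^{\op} \to Mf^{\op} \sma X^{\op}_+$ (a map of $S$-algebras on the nose, since it arises from the diagonal $X \to X \times X$) composed with $\gamma$, together with the observation that the unit inclusion $M' \to M' \sma X^{\op}_+$ is central in the sense of \cite[7.1.2]{ekmm}. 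Centrality plus strict commutativity of $M'$ is what lets extension of scalars produce a genuine $M'$-algebra map, which is then automatically a bimodule map. So the commutative model $M'$ is not used merely for homotopical control of the bar construction, as in your step one, but is the mechanism that furnishes the structured Thom equivalence you need in step two; absorbing this is what would close the gap you identified.
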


\begin{proof}
$THH(A)$ can be described as the derived smash product $A
\sma^L_{A \sma A^{\op}} A$ \cite[9.1.1]{ekmm}.  Of course if $A$ is
commutative, $A \sma A^{\op} \cong A \sma A$.  In our situation, this
specializes to the derived smash product 
\[THH(Mf) = Mf \sma^L_{Mf \sma Mf^{\op}} Mf.\]
If $Mf$ were a commutative $S$-algebra, we could use the Thom
isomorphism to replace $Mf \sma Mf^{\op} \cong Mf \sma Mf$.  We will
show that in fact it suffices for $Mf$ to be weakly equivalent to a
commutative $S$-algebra.  We can assume without loss of generality
that $Mf$ is cofibrant.  Moreover, the hypotheses provide an
equivalence of $S$-algebras $Mf \rightarrow M^\prime$, where
$M^\prime$ can be taken to be a cofibrant commutative $S$-algebra.

The composite \[Mf^{\op} \rightarrow Mf^{\op} \sma S_0 \rightarrow
Mf^{\op} \sma X^{\op}_+ \rightarrow (M^{\prime})^{\op} \sma X^{\op}_+
\htp M^{\prime} \sma X^{\op}_+\] is a map of $S$-algebras, and the map
$M^{\prime} \rightarrow M^{\prime} \sma S^0 \rightarrow M^{\prime}
\sma X^{\op}_+$ is central \cite[7.1.2]{ekmm}.  Therefore extension of
scalars yields an induced map of $M^{\prime}$-algebras $M^{\prime}
\sma Mf^{\op} \rightarrow M^{\prime} \sma X^{\op}_+$, and the Thom
isomorphism theorem implies this map is a weak equivalence.

We will model the derived smash product using the two-sided bar
construction.  The preceding discussion implies that the composite
\[B(Mf, Mf \sma Mf^{\op}, Mf) \rightarrow B(M^\prime, Mf \sma Mf^{\op},
M^{\prime}) \rightarrow B(M^{\prime}, M^{\prime} \sma X^{\op}_+,
M^{\prime})\] is a weak equivalence.  Therefore we have an isomorphism 
\[Mf \sma^L_{Mf \sma Mf^{\op}} Mf \rightarrow M^\prime
\sma^L_{M^\prime \sma X^{\op}_+} M^\prime.\]

The $k$th simplicial level of $B(M^{\prime}, M^{\prime} \sma
X^{\op}_+, M^{\prime})$ is the product \[M^{\prime} \sma (M^{\prime}
\sma X^{\op}_+)^k \sma M^{\prime},\] where the actions of $M^{\prime}
\sma X^{\op}_+$ on $M^{\prime}$ are given by projecting $M^{\prime}
\sma X^{\op}_+ \rightarrow M^{\prime}$ and then using the
multiplication on $M^{\prime}$.  Clearly, there is an isomorphism
\[M^{\prime} \sma (M^{\prime} \sma X^{\op}_+)^k \sma M^{\prime}
\rightarrow (M^{\prime} \sma (M^{\prime})^k \sma M^{\prime} \sma
(X^{\op}_+)^k\]
given by permuting the $X^{\op}_+$ factors to the right, and this map
commutes with the simplicial identities.  Thus, there is an equivalence
\[B(M^{\prime},M^{\prime} \sma X^{\op}_+, M^{\prime}) \htp
B(M^{\prime},M^{\prime},M^{\prime}) \sma B(S, \Sigma^\infty X^{\op}_+,
S), \] 
using the fact that the smash product commutes with realization.
However, since $\Sigma^\infty$ commutes with the bar construction for
monoids \cite{ekmm}, we have weak equivalences \[B(S, \Sigma^\infty
X^{\op}_+, S) \htp \Sigma^\infty BX^{\op}_+ \htp \Sigma^\infty BX_+.\]
We also know that $B(M^{\prime}, M^{\prime}, M^{\prime})$ is homotopic
to $M^{\prime}$.
\end{proof}

Notice that the preceding proof did not require $X$ to be a cofibrant
$\sL$-space, and so we can circumvent issues of the interaction of
$\Gamma$ and cofibrant replacement in the applications.

\section{Calculation of $THH(\Z)$, $THH(\Z/p)$, and $THH(MU)$}

In this section, we use the splitting results of the previous section
to provide easy calculations of $THH$ for various interesting Thom
spectra.  First, we recover results of Bokstedt for $H\Z/p$ and
$H\Z$ \cite{bokstedt2}.  Next, we compute $THH(MU)$, recovering a
calculation of McClure and Staffeldt \cite{mcclure-staffeldt}.
Further calculations of bordism spectra are discussed in the companion
paper \cite{blumberg-cohen-schlichtkrull}.

\subsection{$THH(\Z)$ and $THH(\Z/p)$}

There is an identification due to Mahowald of $H\Z/2$ as the Thom
spectrum associated to a certain map $\Omega^2 S^3 \rightarrow BO$
\cite{cohen-may-taylor, mahowald}.  A modification of this approach
due to Hopkins allows the construction of $H\Z/p$ as the Thom spectrum
associated to a certain $p$-local bundle over $\Omega^2 S^3$.
Finally, $H\Z$ can be obtained as the Thom spectrum of a map $\Omega^2
S^3 \left<3\right> \rightarrow BSF$.  We will discuss these
constructions in the following section, in particular verifying that
all of these Thom spectra are $E_2$ ring spectra associated to
$E_2$ maps structured by the little 2-cubes operad.  Using standard
``change of operad'' techniques discussed in Appendix~\ref{appop}, we
can functorially convert these to classifying maps structured by an
$E_2$ operad augmented over the linear isometries operad.

We have the following proposition, which will allow us to apply
theorem \ref{T:e2split}.

\begin{prop}
For any connective $E_2$-ring spectrum $R$, there is a map of
$E_2$-ring spectra from $R$ to $H\pi_0(R)$, unique up to homotopy,
which induces an isomorphism on $\pi_0$.  Here $H\pi_0(R)$ is regarded 
as an $E_2$-ring spectrum by forgetting from the commutative
$S$-algebra structure.
\end{prop}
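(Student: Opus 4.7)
The plan is to build an $E_2$-Postnikov zero-truncation $P_0 R$ by attaching free $E_2$-cells to kill higher homotopy, then to identify $P_0 R$ with $H\pi_0(R)$ as an $E_2$-ring by cell-by-cell obstruction theory. I work in the topological model structure on algebras over the reduced monad associated to the chosen $E_2$-operad, obtained by the monadic lifting techniques analogous to those reviewed in Section~\ref{sec:coltech}. Setting $R^{(0)} = R$, I would inductively form $R^{(n)}$ from $R^{(n-1)}$ by coning off free $E_2$-cells along representatives of generators of $\pi_n(R^{(n-1)})$. A connectivity estimate for the free $E_2$-algebra functor applied to an $n$-connective wedge of cells shows that this attachment does not affect $\pi_i$ for $i < n$. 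The sequential colimit $P_0 R = \colim_n R^{(n)}$ is then an $E_2$-ring spectrum with $\pi_0(P_0 R) \cong \pi_0(R)$ and $\pi_i(P_0 R) = 0$ for $i > 0$, equipped with a canonical $E_2$-map $R \to P_0 R$.

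Since $R$ is at least homotopy commutative, $\pi_0(R)$ is a commutative ring, so $H\pi_0(R)$ is a commutative $S$-algebra and hence an $E_2$-ring via the augmentation $\aC_2 \to \sL$. To identify $P_0 R$ with $H\pi_0(R)$, I would replace $P_0 R$ by an $E_2$-cofibrant object and construct an $E_2$-ring map $P_0 R \to H\pi_0(R)$ inductively over its cellular filtration: the obstructions to extending over each free $E_2$-cell live in $\pi_*(H\pi_0(R))$, which vanishes in positive degrees, so the identity on $\pi_0$ lifts to an $E_2$-ring map. Any such map is automatically a weak equivalence, since source and target both have homotopy concentrated in degree $0$ and it is a $\pi_0$-isomorphism. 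Composing with $R \to P_0 R$ produces the desired $E_2$-ring map $R \to H\pi_0(R)$ inducing the identity on $\pi_0$.

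For uniqueness, given two such $E_2$-ring maps $f, g \colon R \to H\pi_0(R)$ agreeing on $\pi_0$, I would apply the analogous obstruction-theoretic argument to a cylinder object for a cofibrant replacement of $R$: the obstructions to extending a homotopy over each cell lie in $\pi_i(H\pi_0(R)) = 0$ for $i > 0$, so $f$ and $g$ are $E_2$-homotopic. The main obstacle throughout is the connectivity estimate for the free $E_2$-algebra functor, which reduces to bounding the connectivity of the equivariant extended powers $(E\Sigma_j)_+ \sma_{\Sigma_j} X^{\sma j}$ filtering the free algebra; for $X$ an $n$-connective wedge of spheres with $n \geq 1$, these remain $n$-connective, a standard but nontrivial input that underpins each step of the Postnikov construction and the obstruction arguments above.
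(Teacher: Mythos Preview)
The paper states this proposition without proof, treating it as a known result about structured ring spectra. Your Postnikov-truncation argument in the category of $E_2$-algebras is the standard way to establish it and is correct in outline: build $P_0R$ by iterated free $E_2$-cell attachment, identify it with $H\pi_0(R)$ by obstruction theory against a target with vanishing positive homotopy, and run the same obstruction argument on a cylinder for uniqueness.

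Two small points worth tightening. First, the pieces filtering the free $E_2$-algebra are $(\aC_2(j))_+ \sma_{\Sigma_j} X^{\sma j}$ rather than $(E\Sigma_j)_+ \sma_{\Sigma_j} X^{\sma j}$, since $\aC_2$ is only an $E_2$ operad; this does not affect your connectivity bound, because $\aC_2(j)$ is still a free $\Sigma_j$-CW complex and the homotopy orbit retains the $jn$-connectivity of $X^{\sma j}$. Second, you should make explicit that this connectivity estimate is precisely what ensures the $E_2$-pushout attaching cells along $\pi_n$ agrees with the underlying spectrum-level cofiber through degree $n$, so that coning off a generating set for $\pi_n(R^{(n-1)})$ really does kill all of $\pi_n$ in a single pass rather than requiring an inner iteration. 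With those clarifications your argument is complete and supplies exactly the proof the paper omits.
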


Recall that $THH(HR)$ for $R$ a commutative ring is a product of
Eilenberg-Mac Lane spectra \cite{bokstedt2}, \cite[9.1.3]{ekmm}.  This
implies that we can read off the homotopy type from the homotopy
groups.  Thus to compute $THH(H\Z/2)$, we must compute
$\pi_*(B(\Omega^2 S^3) \sma H\Z/2)$.  This is just the homology of
$\Omega S^3$ with $\Z/2$ coefficients, which can be easily calculated
via inspection of the James construction.  One easily recovers the
result
\[THH(H\Z/2) = \prod_{i=0}^{\infty} K(\Z/2, 2i).\] 
A similar argument applies to $THH(H\Z/p)$.

Finally, to compute $THH(H\Z)$, we must compute $\pi_*(B(\Omega^2
S^3\left<3\right>) \sma H\Z)$.  Once more, this is just the ordinary
homology with integral coefficients of $\Omega S^3\left<3\right>$.
Computing again, we find 
\[THH(H\Z) = K(\Z,0) \times \prod_{i=1}^\infty K(\Z/i, 2i-1).\]

\subsection{THH(MU)}

The splitting formula implies that 
\[THH(MU) \htp MU \sma BBU_+ \htp MU \sma SU_+.\]
We can compute $MU_*(SU)$ via a standard Atiyah-Hirzebruch spectral
sequence calculation, and it turns out to be $MU_*(pt) \otimes
\Lambda(x_1, x_2, \ldots)$, with the generators in odd degrees.  This
agrees with the answer obtained by McClure and Staffeldt
\cite{mcclure-staffeldt}, and as they observe implies that $THH(MU)$
is a product of suspensions of $MU$.  Other bordism spectra are
analogous; see the companion paper \cite{blumberg-cohen-schlichtkrull}
for further discussion.

\section{Realizing Eilenberg-Mac Lane spectra as Thom spectra}\label{S:TEM}

In this section, we review and extend the classical realizations of
Eilenberg-Mac Lane spectra as Thom spectra associated to certain
bundles over $\Omega^2 S^3$ and $\Omega^2 S^3 \left<3\right>$.  Our
main purpose is to ensure that we can obtain these Thom spectra as
ring spectra which are sufficiently structured so as to permit the
construction of $THH$ and the application of our splitting theorem.
In particular, improving on \cite{cohen-may-taylor} we give a new
description of $H\Z$, based on a suggestion of Mike Mandell, as the
Thom spectrum associated to a double loop map $\Omega^2
S^3\left<3\right> \rightarrow BSF$.

\subsection{$H\Z/2$ as the Thom spectrum of a double loop map}

The construction of $H\Z/2$ as a Thom spectrum was the first to be
extensively studied \cite{cohen-may-taylor, mahowald, priddy}.  We
briefly review the construction.  Consider the map $\psi \colon S^1
\rightarrow BO$ representing the nontrivial element of $\pi_1(BO)$.
The Thom spectrum associated to this map is the Moore spectrum
$M\Z/2$.  There is an induced map $\gamma \colon \Omega^2 S^3 \rightarrow
BO$, as $BO$ is an infinite loop space (and in particular a double
loop space).  The Thom spectrum of $\gamma$ is $H\Z/2$.

A sketch of the proof for this is as follows.  There is a map $\aA
\rightarrow H^*(M\gamma;\Z/2)$ given by evaluation on the Thom class
which is a map of modules over the Steenrod algebra.  As $M\gamma$ is
$2$-local, it suffices to show that this map is an isomorphism.
Dualizing, we can consider the corresponding map $H_*(M\gamma; \Z/2)
\rightarrow \aA^*$ of comodules over the dual Steenrod algebra
$\aA^*$.  Next, by the Thom isomorphism we know that
$H_*(M\gamma;\Z/2) \cong H_*(\Omega^2 S^3; \Z/2)$.  The homology of
$\Omega^2 S^3$ is $P\{x_n \mid n \geq 0\}$, where $x_0$ comes from the
inclusion of $H_*(S^1; \Z/2)$ and the action of the Dyer-Lashof
operations is known \cite{cohen-may-taylor} --- specifically, $x_0$
generates the homology as a module over the Dyer-Lashof algebra.  Now,
note that since the dimensions of $\aA$ and $H^*(\Omega^2 S^3; \Z/2)$
are the same, it is enough to show that the evaluation map is either
an injection or a surjection.  

There are a variety of arguments to establish this fact; we will
review the technique used by \cite{priddy}.  First, we observe that
both the Thom isomorphism and the map $\gamma_* \colon H_*(\Omega^2 S^3; \Z/2)
\rightarrow H_*(BO; \Z/2)$ commute with the Dyer-Lashof operations.
Recall that $H_*(BO; \Z/2)$ is generated by the images of the class in
degree 1 under the first Dyer-Lashof operation.  Therefore the
behavior of $\gamma_*$ is completely determined by the fact that
$\gamma_* (x_0)$ is that generating class in degree 1.  Finally, we
note that under the evaluation map $H_*(MO; \Z/2) \rightarrow \aA$ the
images of the iterates of $\gamma_*(x_0)$ under the Dyer-Lashof
operation hit all of the generators of $\aA$.

\subsection{$H\Z/p$ as the Thom spectrum of a double loop map}

Unfortunately, no stable spherical fibration can have $H\Z/p$ as its
associated Thom spectrum --- $\pi_0(Mf)$ is either $\Z$ or $\Z/2$,
depending on whether $f$ represents an orientable bundle or not.
Nonetheless, in \cite{thomified} there is a brief discussion of an
argument due to Hopkins for realizing $H\Z/p$ as the Thom spectrum
associated to a $p$-local stable spherical fibration.

In the bulk of this paper, we studied Thom spectra associated to
monoid $\sI$-FCP's which were augmented over $F$.  The map to $X
\rightarrow F$ was used to give an action of $X(V)$ on the sphere
$S^V$, the fiber of the universal quasifibration $B(*,X(V),S^V)
\rightarrow B(*,X(V),*)$.  However, as we noted previously, this
theory can be carried out with other choices of fiber, in particular
the collection of $p$-local spheres $S^V_{(p)}$ or $p$-complete
spheres $(S^V)\phat$.  Rather than an augmentation over $F$, we will
in this setting require augmentation over the appropriate ``p-local''
or ``p-complete'' analogue.  We rely on the careful treatment of
fiberwise localization and completion given by May
\cite{may-fiberloc}.

\begin{defn}
\hspace{5 pt}
\begin{enumerate}
\item{Let $F_{(p)}$ denote the monoid $\sI$-FCP specified by taking
  $V$ to the based homotopy self-equivalences of $S^V_{(p)}$.  Denote
  by $BF_{(p)}$ the $\sI$-FCP obtained by passing to classifying
  spaces levelwise.}
\item{Let $(F)\phat$ denote the $\sI$-FCP specified by taking $V$ to
  the based homotopy self-equivalences of $(S^V)\phat$.  Denote by
  $B(F)\phat$ the $\sI$-FCP obtained by passing to classifying spaces
  levelwise.}
\end{enumerate}
\end{defn}

$BF_{(p)}(V)$ classifies spherical fibrations with
fiber $S^V_{(p)}$ and $B(F)\phat(V)$ classifies spherical fibrations
with fiber $(S^V)\phat$ \cite{may-fiberloc}.  Note that we must use
continuous versions of localization and completion in order to ensure
we have continuous functors \cite{iwase}.

\begin{rem}
The notation we are using is potentially confusing, as the spaces
$BF_{(p)}(V)$ are not the p-localizations of $BF(V)$ and the spaces
$B(F)\phat$ are not the p-completions of $BF(V)$.  Such equivalences
are only true after passage to universal covers, as there is an
evident difference at $\pi_1$.
\end{rem}

In this setting, we can set up the theory of Thom spectra as discussed
in previous sections of the paper with minimal modifications.  For
oriented bundles, there is a Thom isomorphism with $\Z_{(p)}$ or
$Z\phat$ respectively and for unoriented bundles there is a
$\Z/p$ Thom isomorphism \cite{may-fiberloc}.

Now, $\pi_1(BF_{(p)})$ is the group of $p$-local units $\Z_{p}^{\times}$.
Consider a map $\phi \colon S^1 \rightarrow BF_{p}$ associated to a
choice of unit $u$.  The Thom spectrum associated to
$\phi$ is the Moore spectrum obtained as the cofiber of the map $S_{p}
\rightarrow S_{p}$ given by multiplication by $u-1$.  This
identification follows immediately from the general description of the
Thom spectrum of a bundle over a suspension \cite[9.3.8]{lms}.  Taking
$u = p+1$, which is a $p$-local unit, we obtain the Moore spectrum
$M(\Z/p)$.  As before, there is an induced map $\gamma \colon \Omega^2 S^3
\rightarrow BF_{(p)}$ since $BF_{(p)}$ in an infinite loop space.  

We will show that the Thom spectrum associated to this map is $H\Z/p$.
Once again, the Thom class specifies a map $\aA_{p} \rightarrow
H^*(M\gamma)$ of modules over the Steenrod algebra.  For odd $p$,
$H_*(\Omega^2 S^3;\Z/p) = E\{x_n \mid n \geq 0\} \otimes P\{\beta x_n
\mid n \geq 1\}$, where $x_0$ comes from the inclusion of $H_*(S^1;
\Z/p)$, and is generated as a module over the Dyer-Lashof algebra by
$x_0$ \cite{cohen-may-taylor}.  Again, note that since the dimensions
of $\aA$ and $H^*(\Omega^2 S^3; \Z/p)$ are the same, it is enough to
show that the evaluation map is either an injection or a surjection.
This can be shown by an argument analogous to the one described for
$p=2$ above.

\subsection{$H\Z$ as the Thom spectrum of a double loop map}

Finally, we consider the case of $H\Z$.  It has long been known that
$H\Z$ arises as the Thom spectrum associated to a certain map $\gamma
\colon \Omega^2 (S^3\left<3\right>) \rightarrow BSF$
\cite{cohen-may-taylor, mahowald}.  However, the best published
results obtain a description of this map as an $H$-map
\cite{cohen-may-taylor}, which is inadequate for construction of
$THH$.  Moreover, it is not clear how to adapt the existing
construction to improve this --- the map $\gamma$ is constructed a
prime at a time, and the localized maps $\gamma_p$ are seen to be
$H$-maps because certain obstructions vanish.

Therefore, we give a new construction, based on a suggestion of Mike
Mandell, which enables us to see that there is a suitable map which is
a double loop map.  Both 
$\Omega^2 S^3 \left<3\right>$ and $BSF$ are rationally trivial, and so 
split as the product of their completions.  Therefore a map $\Omega^2
S^3 \left<3 \right> \rightarrow BSF$ can be specified by the
construction of a collection of maps $\Omega^2 S^3 \left<3 \right> \rightarrow
(BSF)\phat$.  Note that the $p$-completion of $BSF$ is weakly
equivalent to $\colim_V B((SF)\phat)$, where $(SF)\phat$ is the
monoid $\sI$-FCP constructed analogously to $(F)\phat$.  The
following lemma is standard.

\begin{lem}
Let $f \colon \Omega^2 S^3 \left<3\right> \rightarrow BSF$ be a map
specified by a collection of maps $f_p \colon \Omega^2 S^3 \left<3 \right>
\rightarrow (BSF)\phat$.  If each $f_p$ is an $n$-fold loop map, then
$f$ is an $n$-fold loop map.
\end{lem}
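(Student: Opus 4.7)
The plan is to apply the arithmetic (Sullivan--Bousfield) fracture square to $BSF$, regarded as the zeroth space of its associated connective spectrum $bsf$. Because the stable stems $\pi_*^s$ are finite in positive degrees by Serre's theorem, $\pi_*(SF)$ and hence $\pi_*(BSF)$ are finite in positive degrees, so $bsf$ is rationally trivial. The fracture square for $bsf$ then collapses to an equivalence of spectra between $bsf$ and $\prod_p bsf\phat$, and applying $\Omega^{\infty}$ yields an equivalence of infinite loop spaces
\[BSF \htp \prod_p (BSF)\phat.\]
Such an equivalence of infinite loop spaces restricts to an equivalence of $n$-fold loop spaces for every $n$.

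With this splitting in hand, the map $f$ is by construction the composite of the collection $\{f_p\}$ with the equivalence $\prod_p (BSF)\phat \htp BSF$; equivalently, $f_p$ is the composite of $f$ with the projection to the $p$-th factor. In the category of $n$-fold loop spaces, a map into a product $\prod_p Y_p$ is an $n$-fold loop map if and only if each projection to $Y_p$ is, since projections are themselves $n$-fold loop maps and the forgetful functor to underlying spaces creates products. The given hypothesis that each $f_p$ is an $n$-fold loop map therefore endows the product map, and hence $f$, with an $n$-fold loop structure.

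The main obstacle is checking that the fracture equivalence is genuinely an equivalence of infinite loop spaces, not merely of underlying spaces. This is a standard fact in spectrum-level arithmetic fracture theory, but some care is needed to keep track of the $E_n$ structure throughout: the cleanest route is to perform the argument at the level of the connective spectrum $bsf$ (where fracture is well-behaved because $bsf$ is bounded below with finitely generated homotopy groups in each degree) and then apply $\Omega^{\infty}$, rather than trying to manipulate $BSF$ directly as an $n$-fold loop space.
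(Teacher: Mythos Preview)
Your proposal is correct and aligns with the paper's own treatment: the paper does not prove the lemma at all, labeling it ``standard,'' but the sentence immediately preceding it (that $BSF$ is rationally trivial and therefore splits as the product of its completions) is precisely the input you use. Your write-up supplies the details the paper omits, and the route via the connective spectrum $bsf$ is the clean way to ensure the splitting is an equivalence of infinite loop spaces rather than merely of spaces.
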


Next, we observe that it will suffice to show that at each prime, the
map given by evaluation on the Thom class induces an equivalence
between the Thom spectrum associated to $\Omega^2 S^3 \left<3
\right> \rightarrow B(SF)\phat$ and $H\Z\phat$.  The Thom class
clearly induces an equivalence in integral homology.  Therefore, if
the evaluation map induces an equivalence in $\Z/p$ cohomology for
each $p$, by naturality it must induce a stable equivalence of
spectra.

For $p = 2$, we can use the map induced by the composite 
\[\Omega^2 S^3 \left<3\right> \rightarrow \Omega^2 S^3 \rightarrow BO
\rightarrow B(SF)\phat.\]
This is a double loop map, and the associated Thom spectrum is
$H\Z\twohat$ \cite{cohen-may-taylor}.  For odd primes, we proceed
as follows.  We know that $\pi_1(B(F)\phat)$ is the group of $p$-adic
units $(\Z\phat)^\times$.  Explicitly, for odd primes this is
$(\Z\phat)^\times \cong \Z/(p-1) \oplus \Z\phat$.
Take a map $\phi$ representing an element of $\pi_1(B(F)\phat)$
which is $0$ on the $\Z/(p-1)$ factor and induces an isomorphism on
the other component.  We can equivalently regard $\phi$ as a map
$\phi \colon S^3 \rightarrow B^3(F)\phat$.  Now, we can lift to a map
$S^3\left<3\right> \rightarrow B^3 (SF)\phat$.  Since $\phi$ is
trivial on the $\Z/(p-1)$ component of $\pi_3(B^3(F)\phat)$, we can
lift the map to the fiber over the map $B^3(F)\phat \rightarrow
K(\Z/(p-1),3)$.  The induced map is an isomorphism on $\pi_3$ by
construction, and so now we can pass to fibers over $K((Z)\phat,3)$
to obtain the desired map.  Looping twice, denote by $\gamma$ the
resulting map $\Omega^2 S^3 \rightarrow B(F)\phat$ and
$\gamma^\prime$ the resulting map $\Omega^2 S^3\left<3\right>
\rightarrow BS(F)\phat$.

First, let us identify the Thom spectrum $M\gamma$.  This proceeds
essentially as in the previous examples.  Specifically, the Thom
spectrum associated to the map $\phi$ is the Moore spectrum obtained
as the cofiber of the map which is multiplication by $u - 1$, where
$u$ is the chosen $p$-adic unit.  This Moore spectrum is determined by
the $p$-adic valuation of $u-1$.  To compute this, let us recall the
identification of the $p$-adic units.  A unit in $(Z)\phat$ is a
$p$-adic integer with an expansion such that the first digit is
nonzero.  The projection onto the units of $\Z/p$ induces the first
component of the identification.  In our case, we are requiring a
choice where the first component is $1$.  Subtracting $1$ from this,
we find that the first component must be $0$ and the later components
are arbitrary.  Combining with the constraint that the projection of
$u$ generates the $(Z)\phat$, we find that we have the Moore spectra
$M(\Z/p)$.  A similar argument to the the one employed above implies
that $M\gamma$ is $H\Z/p$.

Finally, we will use this identification to determine the Thom
spectrum $M\gamma^{\prime}$.  Let us first consider the case of $p$ an
odd prime.  Essentially by construction, there is a commutative
diagram of Thom spectra
\[
\xymatrix{
Mf \ar[r] \ar[d] & M((SF)\phat) \ar[d] \\
H\Z/p \ar[r] & M((F)\phat) \\
}
\]
associated to the commutative diagram of spaces
\[
\xymatrix{
\Omega^2 S^3\left<3\right> \ar[r]\ar[d] & \ar[d] B((SF)\phat) \\
\Omega^2 S^3 \ar[r] & B((F)\phat). \\
}
\]

By the naturality of the Thom isomorphism, this implies that we have a
commutative diagram of modules over the Steenrod algebra
\[
\xymatrix{
\aA \ar[r] \ar[d] & H^*(M\gamma) \ar[d] \\
\aA/ \beta \aA \ar[r] & H^*(M\gamma^\prime) \\
}
\]

The map $\aA \rightarrow \aA / \beta \aA$ is a surjection, we have
seen that the map $\aA \rightarrow H^*(M\gamma)$ is an isomorphism,
and $\Omega^2 S^3 \left<3\right> \rightarrow \Omega^2 S^3$ induces a
surjection on cohomology (and on homology a map of comodules over the
dual Steenrod algebra).  This implies that the top horizontal map must
be a surjection.  Since the dimension of $\aA / \beta \aA$ and
$H^*(\Omega^2 S^3 \left<3\right>; \Z/p)$ are the same, this map must
in fact be an isomorphism.  

\begin{rem}
If we work at the prime $2$, we have that $\pi_1$ is
$(\Z\twohat)^\times = \Z/2 \oplus \Z\twohat$.  Following the outline
above, we would like to identify the Thom spectrum associated to
$\phi$.  The projection onto the units of $\Z/4$ induces the first
component of the identification of $(\Z\twohat)^\times$.  The two
choices are expansions which begin $1, 1, \ldots$ and $1, 0, \ldots$.
Since we want something which projects to $0$, we must have the
latter.  Subtracting $1$ from this, we find we end up with a $p$-adic
number which begins $0, 0, \ldots$ and therefore has $p$-adic
valuation $2$ or higher.  Therefore the associated Thom spectrum is
the Moore spectrum $M(\Z/4)$.

However, consideration of the Dyer-Lashof operations tells us that the
Thom spectrum of $\gamma$ is not $H(\Z/4)$.  In general, we cannot
obtain $H(\Z/p^n)$ as a Thom spectrum over $\Omega^2 S^3$.  This can
be seen by considering the element $x_0$ in $H_1(\Omega^2 S^3)$.  The
last Dyer-Lashof operation takes this to $Q_2 x_0$, but since the
classifying map takes $x_0$ to $0$ it must take $Q_2 x_0$ to zero and
thus must be $0$ on $H^3$ as well, which implies that the Thom
spectrum cannot be the Eilenberg-Mac Lane spectrum.  It is also
possible to deduce the impossibility of realizing $H(\Z/p^n)$ as such
a Thom spectrum by observing that the computations of \cite{brun} are
incompatible with our splitting results.
\end{rem}

\appendix

\section{Change of operads}\label{appop}

The linear isometries operad arises naturally when considering the
infinite loop space structure on $BG$.  Moreover, since we interested
in a Thom spectrum functor which takes values in the EKMM category of
spectra, the presence of the linear isometries operad is to be
expected.  However, it useful to be able to accept a somewhat broader
range of input data.

In some of the examples above, the initial input was maps $X \to B^n
(BF)$, which were looped down to produce $n$-fold loop maps $\Omega^n
X \to \Omega^n B^n (BF)$.  To specify the multiplicative structure
carefully, we need to choose a precise model of the delooping $B$.
Let us assume we are working with a specified choice of $BF$ where the
$E_\infty$ structure is described by an action of the linear
isometries operad $\sL$.  By pullback, we regard this as a space
structured by the product operad $\aC_n \times \sL$, where $\aC_n$ is
the little $n$-cubes operad.  Denote by $\bD$ the monad associated to
this operad.  Following \cite[13.1]{may-geom}, for any $\bD$-space $Z$
we have the diagram
\[
\xymatrix{
Z & \ar[l]_{\htp} B(\bD, \bD, Z) \ar[r]^{\htp} & \Omega^n B(\Sigma^n,
\bD, Z)  
}
\]
in which the maps are maps of $\bD$-spaces, and the action of $\bD$ on
$\Omega^n \Sigma^n$ comes from the augmentation of $\bD$ over the
monad associated to the little n-cubes operad.  The $\bD$-space
action on $\Omega^n B(\Sigma^n, \bD, Z)$ is produced by pullback from
the $\aC_n$ action on $B(\Omega^n \Sigma^n, \bD, Z)$.  Thus, we use
$B(\Sigma^n, \bD, BF)$ as our model of $B^n BF$.  

Given a map $X \to B(\Sigma^n, \bD, BF)$, the associated map $\Omega^n X
\to \Omega^n B(\Sigma^n, \bD, BF)$ is a map of $\bD$-spaces with regard
to the geometric action of the little $n$-cubes operad --- and on
$\Omega^n B(\Sigma^n, \bD, BF)$, this is precisely the action that
arises in the diagram above.  Pulling back, we get a map of
$\bD$-spaces $X' \to B(\bD,\bD,BF)$, and pushing forward along the map
$B(\bD, \bD, BF) \to BF$ we get a map of $\bD$-spaces $X' \to BF$
where the $\bD$ action on $BF$ comes from the augmentation over the
linear isometries operad.

\bibliographystyle{plain}
\bibliography{thom-thesis}

\end{document}